\def \X{\mathcal{X}} 
\def \gpD{\langle\Delta\rangle} 
\def \gpDe{\langle\Delta^e\rangle} 
\def \CAL{\mathcal C_{\rm AL}} 
\def \dAL{d_{\rm AL}} 
\definecolor{dgreen}{rgb}{0.0, 0.5, 0.0}
\def \co{\colon\thinspace}
\def \Z{\mathbb{Z}}
\def \N{\mathbb{N}}
\newtheorem{theorem}{Theorem}[section]
\newtheorem{lemma}[theorem]{Lemma}
\newtheorem{proposition}[theorem]{Proposition}
\newtheorem{prop}[theorem]{Proposition}
\newtheorem{corollary}[theorem]{Corollary}
\newtheorem{conjecture}[theorem]{Conjecture}
\theoremstyle{definition}
\newtheorem{definition}[theorem]{Definition}
\newtheorem{remark}[theorem]{Remark}
\newtheorem{example}[theorem]{Example}
\newtheorem{notation}[theorem]{Notation}
\newtheorem{question}[theorem]{Question}
\title[Morse elements in Garside groups]{Morse elements in Garside groups are\\ strongly contracting}
\author{Matthieu Calvez} 
\address{Department of Mathematics, Heriot-Watt University, Edinburgh, EH14 4AS, UK}
\address{Instituto de matem\'aticas, Universidad de Valpara\'iso, Gran Breta\~na 1091, 3er piso, Playa Ancha, Valpara\'iso, Chile}
\email{calvez.matthieu@gmail.com}
\author{Bert Wiest}
\address{Bert Wiest, Univ Rennes, CNRS, IRMAR - UMR 6625, F-35000 Rennes, France}
\email{bertold.wiest@univ-rennes1.fr}
\begin{document}

\maketitle

\begin{abstract}
We prove that in the Cayley graph of any braid group modulo its center $B_n/Z(B_n)$, equipped with Garside's generating set, the axes of all pseudo-Anosov braids are strongly contracting. 
More generally, we consider a Garside group~$G$ of finite type with cyclic center.  
We prove that in the Cayley graph of~$G/Z(G)$, equipped with the Garside generators, the axis of any Morse element is strongly contracting. 
As a consequence, we prove that Morse elements act loxodromically on the additional length graph of $G$.
\end{abstract}


\section{Introduction}

For a finitely generated group~$G$, equipped with some fixed finite generating set, and an element $g\in G$ of infinite order, one can study the axis ${\rm axis}(g)=\langle g\rangle$, seen as a set of vertices in the Cayley graph~$\Gamma(G)$. 
There are many different ways of formalizing the idea that this axis might ``look like a geodesic in a hyperbolic space''. 

A particularly weak notion is that $\langle g\rangle$ is quasi-isometrically embedded in~$G$. 
A particularly strong condition is that the axis is \emph{strongly contracting}, which is equivalent to being \emph{strongly constricting}~\cite{ArzhCashenTao}. 
There are many intermediate notions -- for instance the axis could be hyperbolically embedded~\cite{DahmaniGuirardelOsin, OsinAcylHyp, OsinActingAcyl}, 
it could be rank one~\cite{HamenstadtRank1,SistoRandomWalk},
it could be Morse~\cite{DahmaniGuirardelOsin,SistoQuasiConv}, 
it could be contracting in the sense of~\cite{ABD,ArzhCashenTao}, it could have various other contraction and divergence properties~\cite{ArzhCashenGruberHume}, or constriction properties~\cite{ArzhCashenTao}. 

In this paper we will be interested in two of these properties, namely the Morse property and the strong contraction property. Precise definitions can be found in Sections~\ref{S:Morse} and~\ref{S:StrongContraction}.

An element $g$ (or its axis in the Cayley graph $\Gamma(G)$) is said to be \emph{Morse} if this axis is quasi-isometrically embedded in~$\Gamma(G)$,
and if for each pair of constants $(K,L)$, there exists a constant $M_g^{(K,L)}$ such that
every $(K,L)$-quasi-geodesic between two points of the axis travels in an $M_g^{(K,L)}$-neighborhood of the axis.
A remarkable example of the Morse property is the result of Behrstock~\cite{Behrstock} that pseudo-Anosov elements in mapping class groups are Morse -- see also~\cite{SistoQuasiConv,DahmaniGuirardelOsin}.
(In fact, \cite[Theorem~4.2]{DuchinRafi} implies that their axes satisfy the stronger condition of being contracting.)

Saying that the axis of an element $g$ (or indeed any other subset~$A$ of the Cayley graph~$\Gamma(G)$) has the {\emph strong contraction} property means, roughly speaking, that 
there is a constant~$C$ such that taking any ball in~$\Gamma(G)$ disjoint from~$A$, and projecting this ball to $A$ via a closest-point projection, yields a subset of~$A$ of diameter at most~$C$.

One first crucial observation about these two properties is that the Morse property is invariant under quasi-isometry (e.g.\ when looking at the axis of an element in the Cayley graphs of~$G$ with respect to two different generating sets), whereas the strong contraction/constriction property is not. One reason for the failure of quasi-isometry invariance is that these strong properties make reference to actual distances and geodesics, \emph{not} to quasi-geodesics.

There is one well-known family of groups with a very natural family of geodesics between any pair of points in the Cayley graphs, namely the Garside groups. The notions of Garside theory needed in this paper will be recalled in Section~\ref{S:Garside}. We will be interested specifically in \emph{$\Delta$-pure Garside groups of finite type}. 

A Garside group of finite type~$G$ is generated by a finite lattice~$\mathcal D$ with a top element called~$\Delta$. Garside groups are bi-automatic -- in particular, every element $g\in G$ is represented by a unique word in a certain normal form, with letters in $\mathcal D^{\pm 1}$; these normal form words represent \emph{geodesics} in the Cayley graph of~$G$ with respect to~$\mathcal D$. 
We will also require that our Garside groups are $\Delta$-pure, or equivalently Zappa-Sz\'ep indecomposable -- this condition means in particular that the center of~$G$ is infinite cyclic and is generated by some power~$\Delta^e$.

The most famous examples of Garside groups of finite type are the braid groups, and more generally the Artin-Tits groups of spherical type~\cite{CharneyArtinBiautom}. (In this setting, the $\Delta$-pureness condition is equivalent to the defining Coxeter graph being connected.)

Since the infinite subgroup $\gpDe $ of~$G$ is central, there cannot be any elements whose axes in the Cayley graph $\Gamma(G,\mathcal D)$ are Morse or strongly contracting.
Instead, we will study the axes of elements in the Cayley graph of $G$ modulo its center. We will denote $\overline \Gamma=\Gamma(G/Z(G),\,\mathcal D)$ and we will say that an element of a Garside group~$G$ is Morse if its axis in~$\overline\Gamma$ is Morse.

Actually, for our proof it will be technically convenient not to work with the model space~$\overline \Gamma=\Gamma(G,\mathcal D)/\gpDe$, but with the following quasi-isometric variation. We define $\X$ to be the quotient of the Cayley graph $\Gamma(G,\mathcal D)$ under the right $\gpD$-action: $\X=\Gamma(G,\mathcal D)/\gpD$.
We note that this graph $\X$ is the 1-skeleton of the simplicial complex previously considered in~\cite{Bestvina,CMW}. The graph~$\X$ was also studied in~\cite[VIII.3.2]{DDGKM}, under the name~$\mathcal G^0$, and in~\cite{CalvezWiestCurve,CalvezWiestAH}.

Our main result is as follows (see Theorem~\ref{T:StrongContr} for a precise version):

\begin{theorem}\label{T:main}
Suppose $G$ is a $\Delta$-pure Garside group of finite type. Suppose $g$ is an element of~$G$ whose axis in $\X$ (or, equivalently, in~$\overline \Gamma$) is Morse. Then this axis is strongly contracting, both as a subset of $\X$ and as a subset of $\overline \Gamma$.
\end{theorem}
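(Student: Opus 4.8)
\emph{Proof sketch (plan).} The plan is to prove the closest-point-projection form of strong contraction directly in~$\X$, and then transfer it to~$\overline\Gamma$. Write $\alpha=\langle g\rangle$ for the axis and $\pi_\alpha$ for a closest-point projection onto it; since $\X$ is a geodesic graph and, by the Morse hypothesis, $\alpha$ is quasi-isometrically embedded, $\pi_\alpha$ is coarsely well defined. By the equivalence of strong contraction with strong constriction~\cite{ArzhCashenTao}, it suffices to produce a constant~$C$ such that any geodesic $\sigma$ of~$\X$ (realised by a normal-form word) with $\sigma\cap N_C(\alpha)=\emptyset$ satisfies $\operatorname{diam}\pi_\alpha(\sigma)\le C$; equivalently, that $d(\pi_\alpha(x),\pi_\alpha(y))\le C$ whenever $d(x,y)\le d(x,\alpha)$.

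First I would record what the Morse hypothesis alone provides. Being Morse is equivalent to being \emph{sublinearly} contracting~\cite{ArzhCashenGruberHume}: there is a sublinear function~$\rho$ with $\operatorname{diam}\pi_\alpha(B)\le\rho(d(x,\alpha))$ for every ball $B=B(x,r)$ disjoint from~$\alpha$. This already makes $\pi_\alpha$ coarsely Lipschitz and prevents a far geodesic from projecting onto an arbitrarily large proportion of the stretch of~$\alpha$ it runs beside. It is, however, \emph{not} enough for strong contraction, and cannot be: the Morse property is a quasi-isometry invariant whereas strong contraction is not, as recalled in the introduction. The entire difficulty is therefore concentrated in upgrading the bound from sublinear to uniform, and this upgrade must appeal to the genuine distances and to the canonical normal-form geodesics of the Garside structure, not to any quasi-isometry-invariant data.

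The heart of the argument, and the step I expect to be the main obstacle, uses the Garside combinatorics of~$\X$. The cyclic group $\langle g\rangle$ acts on~$\X$ by left multiplication, isometrically, and cocompactly on the quasigeodesic~$\alpha$; hence $\alpha$ is \emph{coarsely periodic}, with period the stable translation length of~$g$. (One could pass to a genuinely periodic, rigid model using Garside conjugacy theory, but coarse periodicity already suffices and avoids that reduction.) I would then argue that, for a Morse axis, the closest-point projection onto~$\alpha$ is effectively computable from a \emph{bounded window} of normal-form data: using the lattice operations on~$\mathcal D$ and the fellow-traveling of left normal forms (bi-automaticity), the projection of a point is determined, up to bounded error, by matching a bounded number of its normal-form factors against the periodic pattern along~$\alpha$. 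The role of the Morse hypothesis is precisely to rule out the long-range ambiguity in this matching that, for a non-Morse element, would let a geodesic shadow~$\alpha$ over an unbounded range while remaining far from it. Concretely, if $\operatorname{diam}\pi_\alpha(\sigma)$ were large then $\sigma$ would have to fellow-travel a long periodic stretch of~$\alpha$, and in the Garside metric such prolonged fellow-traveling forces $\sigma$ into a bounded neighborhood of~$\alpha$, contradicting $\sigma\cap N_C(\alpha)=\emptyset$. Turning ``large projection $\Rightarrow$ long shadowing $\Rightarrow$ forced closeness'' into a uniform, period-independent estimate — thereby converting the sublinear function~$\rho$ into a constant by a pigeonhole argument along the period — is the crux; I would expect the hyperbolicity of the additional length graph~$\CAL$ to be a convenient auxiliary tool for controlling how $\sigma$ and~$\alpha$ interact, even though the final statement concerns the thick metric of~$\X$.

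Finally I would transfer the conclusion from~$\X$ to~$\overline\Gamma$. Since $\gpDe\subseteq\gpD$ has index~$e$ and $\Delta$ moves every vertex of~$\overline\Gamma$ a bounded amount, the quotient $\overline\Gamma\to\X$ satisfies $d_\X\le d_{\overline\Gamma}\le d_\X+(e-1)$; that is, it is a $(1,e-1)$-quasi-isometry, with trivial multiplicative constant. Strong contraction is preserved under such bounded-additive-error quasi-isometries — the failure of quasi-isometry invariance occurs only for genuinely multiplicative distortions — so the bound obtained in~$\X$ yields the corresponding bound in~$\overline\Gamma$, giving the statement in both models.
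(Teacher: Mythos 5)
There is a genuine gap, and it sits exactly where you locate ``the crux.'' Your plan is: coarse periodicity of the axis plus the sublinear contraction function coming from the Morse property, upgraded to a uniform constant ``by a pigeonhole argument along the period.'' But no such pigeonhole argument is given, and ``Morse $\Rightarrow$ strongly contracting'' is precisely the implication that is \emph{false} in general metric spaces (and even in Cayley graphs of mapping class groups for bad generating sets, as in Rafi--Verberne) -- so any correct proof must inject concrete Garside input at this point, and your sketch never actually does. Worse, you explicitly decline the one reduction that makes the ``bounded window of normal-form data'' idea work: passing (via Proposition~\ref{P:PowerConjToRigid}) to a power of a conjugate that is \emph{right-rigid} with $\inf=0$. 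Rigidity is what guarantees that the normal form of $x^k$ is the $k$-fold concatenation of that of $x$, and it is used essentially in Lemma~\ref{L:ProjectionLipschitz} (via $x_1\succcurlyeq D_l(x^{-2}z)$) to show that the prefix-defined projection $\pi(h)=x^{\lambda(h)}$, $\lambda(h)=-\max\{k: x\not\preccurlyeq\underline{x^kh}\}$, is Lipschitz; without it the prefix-matching you describe can drift unboundedly and the projection is not even coarsely well defined from bounded data. The paper then never uses the sublinear contraction function at all: it uses the Morse property only for $(2,0)$- and $(2,C)$-quasi-geodesics obtained by concatenating two preferred geodesics (Lemma~\ref{L:ConcatQuasiGeod}), together with convexity of balls (Proposition~\ref{P:PreferredPath}(v)) and the fellow-traveller property, to show that preferred geodesics pass within uniform distance of $\pi$ (Proposition~\ref{P:GeodCloseToProj}) and hence that $\pi$ is strongly contracting. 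Finally, invoking the hyperbolicity of $\CAL$ as an auxiliary tool is backwards relative to the logical order here: the loxodromic action on $\CAL$ (Theorem~\ref{T:MorseLoxOnCAL}) is deduced \emph{from} strong contraction, so you would risk circularity.

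Two smaller remarks. Your transfer step from $\X$ to $\overline\Gamma$ is essentially correct: the two metrics differ by a purely additive error (at most $2\lfloor e/2\rfloor$ rather than $e-1$, but this is immaterial), and strong contraction/constriction does survive $(1,K)$-quasi-isometries with a controlled loss in the constant; this matches the paper's Proposition~\ref{P:XandGammaBarAreQI} and Lemma~\ref{L:Hausdorff}(ii). Also, ``coarsely well defined closest-point projection'' is not automatic for a quasi-geodesic -- uniqueness of closest points up to bounded error is itself a consequence of the contraction you are trying to prove -- though one can simply fix a choice of closest point, so this is repairable.
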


\begin{corollary}\label{C:pAStronglyContr}
Consider the braid group~$B_n$, equipped with the generating set $\mathcal D_{\rm classical}$ or $\mathcal D_{\rm dual}$ coming from its classical or dual Garside structure.
Then in the Cayley graphs $\Gamma(B_n/Z(B_n), \mathcal D_{\rm classical})$ and $\Gamma(B_n/ Z(B_n),\mathcal D_{\rm dual})$, the axis of any pseudo-Anosov braid is strongly contracting.  
\end{corollary}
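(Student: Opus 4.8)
The plan is to deduce this corollary directly from Theorem~\ref{T:main}. For each of the two Garside structures on~$B_n$ we must verify two hypotheses: first, that $B_n$ equipped with that structure is a $\Delta$-pure Garside group of finite type; and second, that every pseudo-Anosov braid is Morse in the sense of the present paper, i.e.\ that its axis in~$\overline\Gamma$ (equivalently, in~$\X$) is Morse. Granting these two points, Theorem~\ref{T:main} at once gives that the axis of any pseudo-Anosov braid is strongly contracting in both $\X$ and $\overline\Gamma$, and since $\overline\Gamma=\Gamma(B_n/Z(B_n),\mathcal D)$ this is exactly the assertion of the corollary.

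The first hypothesis is classical. Both the classical and the dual structures present $B_n$ as a Garside group of finite type, the generating lattices being respectively the divisors of the half-twist (a copy of the symmetric group) and the noncrossing partition lattice. In each case the group is $\Delta$-pure: for the classical structure this is the connectedness of the Coxeter graph of type $A_{n-1}$ recalled in the introduction, while for the dual structure the analogous indecomposability of the noncrossing partition lattice yields the same conclusion. In both cases $Z(B_n)$ is infinite cyclic, generated by the full twist, which is a power of the relevant Garside element ($\Delta^2$ classically, $\delta^n$ dually), so the hypotheses of Theorem~\ref{T:main} are met.

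The second hypothesis is where the genuine input lies, and the key leverage is the quasi-isometry invariance of the Morse property stressed in the introduction. Capping the boundary of the $n$-punctured disk with a once-punctured disk sends the boundary Dehn twist (which generates $Z(B_n)$) to the identity and realizes $B_n/Z(B_n)$ as the stabilizer of the capping puncture inside $\mathrm{MCG}(S_{0,n+1})$, a finite-index subgroup; in particular the two groups are quasi-isometric. A pseudo-Anosov braid caps to a pseudo-Anosov mapping class, its invariant foliations filling the capped surface. By Behrstock~\cite{Behrstock} (or by the stronger contraction statement of Duchin--Rafi~\cite[Theorem~4.2]{DuchinRafi}), pseudo-Anosov mapping classes are Morse in $\mathrm{MCG}(S_{0,n+1})$ for any word metric. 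Since the Morse property is a quasi-isometry invariant and is independent of the chosen generating set, it follows that every pseudo-Anosov braid is Morse in $B_n/Z(B_n)$ with respect to either Garside generating set, so its axis in~$\overline\Gamma$ is Morse.

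The point I would emphasise is that one \emph{cannot} obtain strong contraction by the same transfer argument: as observed in the introduction, strong contraction is not quasi-isometry invariant and depends on the actual geodesics, so it can neither be imported from $\mathrm{MCG}(S_{0,n+1})$ nor carried between the classical and dual generating sets. This is precisely the gap closed by Theorem~\ref{T:main}, which upgrades the robust, quasi-isometry-invariant Morse property to the metrically delicate strong contraction property inside the specific geometry furnished by the Garside normal form. Thus the only real work is establishing the Morse property above; the passage to strong contraction is an immediate application of the theorem, and the main obstacle has already been absorbed into its proof.
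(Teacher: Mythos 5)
Your proposal is correct and follows essentially the same route the paper intends: the corollary is meant to be an immediate consequence of Theorem~\ref{T:main} once one knows that both Garside structures on $B_n$ are $\Delta$-pure of finite type and that pseudo-Anosov braids are Morse, the latter being exactly the paper's Example in Section~\ref{S:Morse} (projection to the finite-index subgroup $B_n/Z(B_n)$ of the mapping class group of the $(n+1)$-punctured sphere, plus Behrstock's theorem and quasi-isometry invariance of the Morse property). Your closing remark about why strong contraction cannot be transferred by quasi-isometry matches the discussion in the paper's introduction.
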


\begin{remark} 
Here is some context for these results:

(i) \ Rafi and Verberne~\cite{RafiVerberne} have constructed a pseudo-Anosov element of the mapping class group of the five-punctured sphere (which contains $B_4/\langle\Delta^2\rangle$ as a subgroup of index 5), and a generating set for this mapping class group, such that the axis of this pseudo-Anosov element in the corresponding Cayley graph is not strongly contracting. Our Corollary~\ref{C:pAStronglyContr} supports the idea that examples such as that of~\cite{RafiVerberne} can only exist under a pathological choice of generating set.
    
(ii) \ Let $S$ be a surface of finite type, and $\mathcal T(S)$ its Teichm\"uller space, equipped with the Teichmüller metric and the $Mod(S)$-action. 
Minsky proved~\cite{MinskyQuasiProj} that the axis of every pseudo-Anosov element of $Mod(S)$ has the strong contraction property in~$\mathcal T(S)$.
    
(iii) \ There is a hierarchy of contraction properties: 
strongly contracting implies contracting which in turn implies sublinearly contracting, which is equivalent to being Morse~\cite{ArzhCashenGruberHume}; neither of the implications is an equivalence~\cite{RafiVerberne,BradyTran}.
However, under a CAT(0)-hypothesis there \emph{is} a strong converse~\cite{Sultan,CashenMorseStrongContr}: Morse axes in CAT(0)-spaces are strongly contracting. 
Thus Theorem~\ref{T:main} is an indicator that Garside groups have CAT(0)-like behaviour, and thus gives further evidence that the answer to the following question may be affirmative.
\end{remark}

\begin{question}
Are all Garside groups CAT(0)? (Note that it is not even known whether all braid groups are CAT(0) \cite{BradyMccammond,HaettelKielakSchwer, Jeong}.)
\end{question}

Our Theorem~\ref{T:main} begs the question which elements of well-known Garside groups \emph{are} Morse. For braid groups, we know the answer from~\cite{Behrstock}: it's the pseudo-Anosov elements. For other irreducible Artin-Tits groups of spherical type, however, the question is open. The authors hand-constructed in~\cite{CalvezWiestAH} some elements in each such group whose axes are strongly contracting, and in particular Morse. We believe, however, that all plausible candidates for being Morse really are Morse:

\begin{conjecture}\label{C:AllCandidatesMorse}
An element~$a$ of an irreducible Artin-Tits groups of spherical type~$A$ is Morse if and only if its image in $A/Z(A)$ has virtually cyclic centralizer.
\end{conjecture}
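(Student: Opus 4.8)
The plan is to read the statement through the dictionary provided by the acylindrical action of $A/Z(A)$ on the additional length graph~$\CAL$ established in~\cite{CalvezWiestAH}, combining it with Theorem~\ref{T:main} in one direction and with work on hyperbolically embedded subgroups in the other.

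\emph{From Morse to virtually cyclic centralizer.} Suppose first that $a$ is Morse. By Theorem~\ref{T:main} its axis is strongly contracting, and strong contraction forces the image $\bar a$ of~$a$ in $A/Z(A)$ to act loxodromically on the hyperbolic graph~$\CAL$ --- the consequence highlighted in the abstract. Since the action of $A/Z(A)$ on~$\CAL$ is acylindrical~\cite{CalvezWiestAH}, the element~$\bar a$ lies in a unique maximal virtually cyclic subgroup $E(\bar a)$, namely the setwise stabiliser of its pair of fixed points on $\partial\CAL$, by the general theory of acylindrical actions~\cite{DahmaniGuirardelOsin, OsinAcylHyp}. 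Any element commuting with $\bar a$ preserves this pair of fixed points, so $C_{A/Z(A)}(\bar a)\subseteq E(\bar a)$ and the centraliser is virtually cyclic. This implication is essentially formal given the results already in hand.

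\emph{From virtually cyclic centralizer to Morse.} This is the substantive direction, and I would organise it around a Nielsen--Thurston-type trichotomy. A periodic element is torsion in $A/Z(A)$ and lies outside the scope of the Morse property, so the genuine content is for $\bar a$ of infinite order; acylindricity then leaves exactly two possibilities. Either $\bar a$ acts loxodromically on~$\CAL$, or it acts with bounded orbits and is therefore \emph{reducible}, in the sense that some power of $\bar a$ preserves, up to conjugacy, a proper standard parabolic subgroup. In the loxodromic case the conclusion is quick: a loxodromic element of an acylindrical action lies in a hyperbolically embedded virtually cyclic subgroup $E(\bar a)$~\cite{DahmaniGuirardelOsin}, elements of hyperbolically embedded subgroups are Morse~\cite{SistoQuasiConv}, and --- the Morse property being a quasi-isometry invariant --- this holds in $\X$ as in any other word metric. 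Thus the loxodromic elements are precisely the Morse ones, and the whole problem reduces to showing that a \emph{reducible} infinite-order element can never have virtually cyclic centraliser.

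\emph{The main obstacle.} Ruling out the reducible case is where the real difficulty lies, and I expect it to be the heart of any proof. The expectation is that if a power of $\bar a$ preserves a proper standard parabolic subgroup~$P$, then $C_{A/Z(A)}(\bar a)$ contains two independent commuting directions --- the element that $\bar a$ induces on~$P$ together with a ``twist'' supported away from~$P$ (equivalently, the restriction to a complementary parabolic factor) --- so that the centraliser contains a copy of~$\Z^2$ and cannot be virtually cyclic. For braid groups this is exactly what the explicit description of centralisers due to Gonz\'alez-Meneses and Wiest delivers, via the canonical reduction system. For a general irreducible Artin--Tits group of spherical type, however, neither a canonical reduction theory nor a structural description of the centralisers of reducible elements is currently available in a form precise enough to exhibit this commuting~$\Z^2$; supplying this reduction-and-centraliser machinery in the general Garside setting is the principal gap, with Theorem~\ref{T:main} and~\cite{CalvezWiestAH} furnishing the geometric backbone that converts it into the Morse dichotomy above.
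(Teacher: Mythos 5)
The statement you are proving is Conjecture~\ref{C:AllCandidatesMorse}: the paper explicitly leaves it open and offers no proof, so there is nothing to compare your argument against, and your proposal should be judged as a roadmap rather than a proof. On the easy direction, note that the paper already contains a complete and much shorter argument: Lemma~\ref{L:Centralizer} shows directly, for any finitely generated group, that a Morse element $h$ has $\langle h\rangle$ of finite index in its centralizer; no appeal to Theorem~\ref{T:main}, to $\CAL$, or to acylindricity is needed. Your route through the action on $\CAL$ moreover assumes that this action of $A/Z(A)$ is \emph{acylindrical}, which is not what \cite{CalvezWiestAH} proves --- that paper exhibits particular loxodromic WPD elements, which gives acylindrical hyperbolicity of the group in the sense of \cite{OsinAcylHyp}, but not acylindricity of the $\CAL$-action itself. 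Without acylindricity you cannot pass from ``commutes with $\bar a$'' to ``preserves the fixed-point pair'' via the general theory; with Lemma~\ref{L:Centralizer} you do not need to.

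For the substantive converse direction your own assessment is accurate, but the gap is wider than the one you name. Besides the missing reduction-and-centraliser theory for general irreducible Artin--Tits groups of spherical type, two further inputs of your sketch are unproved: (a) the dichotomy ``infinite order and non-loxodromic on $\CAL$ implies reducible'' is only known for braid groups, where $\CAL$ can be compared with curve-graph technology; for a general $A$ this classification of bounded-orbit elements is itself conjectural. (b) Even granting that $\bar a$ is loxodromic on $\CAL$, the chain ``loxodromic $\Rightarrow$ hyperbolically embedded $E(\bar a)$ \cite{DahmaniGuirardelOsin} $\Rightarrow$ Morse \cite{SistoQuasiConv}'' requires $\bar a$ to be WPD, and ``loxodromic $\Rightarrow$ WPD'' again presupposes acylindricity of the action. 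The paper proves the implication in the opposite direction only (Theorem~\ref{T:MorseLoxOnCAL}: Morse implies loxodromic and WPD). So the proposal, while a sensible strategy, leaves the conjecture exactly as open as the authors state it to be.
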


An application of our results (indeed, the authors' original motivation for this research) concerns the \emph{additional length graph} $\CAL(G)$~\cite{CalvezWiestCurve,CalvezWiestAH,CalvezWiestSurvey}. To any Garside group~$G$ one can associate a $\delta$-hyperbolic graph~$\CAL(G)$ whose relation to~$G$ is  loosely analogous to the curve graph's relation to the mapping class group. Indeed, if $B_n$ is the $n$-strand braid group, then $\CAL(B_n)$ is conjectured to be quasi-isometric to the curve graph of the $(n+1)$-punctured sphere.

Using Theorem~\ref{T:main} we can prove (see Theorem~\ref{T:MorseLoxOnCAL} and Corollary~\ref{C:MorseDiamInf})

\begin{proposition}\label{P:MorseLoxOnCAL}
Suppose 
$G$ is a $\Delta$-pure Garside group of finite type. 
If $g$ is a Morse element of~$G$, then the action of~$g$ on~$\CAL(G)$ is loxodromic and weakly properly discontinuous. In particular, if $G$ contains a Morse element then $\CAL(G)$ has infinite diameter.
\end{proposition}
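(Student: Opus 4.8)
The plan is to derive Proposition~\ref{P:MorseLoxOnCAL} as a consequence of Theorem~\ref{T:main} by transporting the strong contraction property from the axis in~$\X$ (equivalently~$\overline\Gamma$) to a statement about the action on the hyperbolic graph~$\CAL(G)$. The key point is that $\CAL(G)$ is built from~$G$ in a way that is controlled by the same closest-point projection machinery: there should be a coarsely Lipschitz, $G$-equivariant map $\X\to\CAL(G)$ (or a comparison between distances $\dAL$ and the combinatorial distance in~$\X$), so that the strongly contracting axis of~$g$ in~$\X$ maps to a quasi-geodesic ray in~$\CAL(G)$ along which~$g$ translates.

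First I would recall (from the cited references~\cite{CalvezWiestCurve,CalvezWiestAH}) the construction of~$\CAL(G)$ and the relationship between the additional length metric~$\dAL$ and distances in~$\X$, in particular that $\langle g\rangle$ embeds into~$\CAL(G)$ and that moving along the axis by one period changes the $\dAL$-distance in a controlled way. Next, to establish \emph{loxodromic} behaviour, I would show that the orbit map $n\mapsto g^n\cdot v_0$ is a quasi-isometric embedding $\Z\to\CAL(G)$; here the strong contraction property of the axis in~$\X$ guarantees that closest-point projections to the axis are coarsely well-defined and stable, which forces the translation lengths $\dAL(v_0, g^n v_0)$ to grow linearly rather than sublinearly. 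Since $\CAL(G)$ is $\delta$-hyperbolic, a quasi-isometrically embedded copy of~$\Z$ is automatically a loxodromic axis.

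For the \emph{weakly properly discontinuous} (WPD) conclusion, I would verify Bestvina--Fujiwara's condition: for every $\varepsilon>0$ and every $v\in\CAL(G)$, only finitely many group elements~$h$ both move~$v$ a bounded $\dAL$-distance and move~$g^N v$ a bounded $\dAL$-distance, for~$N$ large. The strong contraction property is exactly the tool that lets one control which elements can coarsely fix a long segment of the axis: any such~$h$ must coarsely preserve the axis and hence coarsely commute with~$g$, and $\Delta$-pureness together with the cyclic-center hypothesis confines these elements to a finite set modulo the (now quotiented-out) center. Finally, the infinite-diameter statement is immediate, since a loxodromic isometry forces unbounded orbits.

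The hard part will be the WPD verification, specifically translating ``$h$ coarsely fixes a long sub-segment of the $\dAL$-axis'' into ``$h$ lies in a finite set''. This requires combining the strong contraction estimate (controlling closest-point projections) with the structure of centralizers in $\Delta$-pure Garside groups of finite type, and checking that the passage between the $\X$-metric and the $\dAL$-metric does not destroy the finiteness; I expect this comparison of metrics, rather than the hyperbolic geometry itself, to be the main technical obstacle.
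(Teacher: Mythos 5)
Your overall strategy is the right one and matches the paper's in outline (reduce to a right-rigid conjugate of a power, use the Garside-theoretical projection and its strong contraction/constriction property, deduce loxodromy from linear growth of $\dAL(1,x^n)$ and WPD from finiteness of a bounded set in the locally finite graph $\X$). But there is a genuine gap at exactly the point you flag as ``the main technical obstacle'': you offer no mechanism for comparing $\dAL$ with $d_\X$. The inclusion $\X\to\CAL(G)$ is indeed $1$-Lipschitz and equivariant, but that only bounds $\dAL$ \emph{from above}; loxodromy requires a \emph{lower} bound on $\dAL(1,x^n)$, i.e.\ one must show that the extra edges of $\CAL(G)$ (those labelled by absorbable elements) cannot shortcut along the axis. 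Strong contraction of the axis in $\X$ does not by itself ``force the translation lengths $\dAL(v_0,g^nv_0)$ to grow linearly'': a priori a single absorbable element could travel past an arbitrarily long sub-segment of the axis, collapsing the $\dAL$-distance. The missing ingredient is the paper's Lemma~\ref{L:AbsorbableShortProj}: any edge of $\CAL(G)$ joining $h_1\gpD$ to $h_2\gpD$ satisfies $d_\X(\pi(h_1),\pi(h_2))<F$ for a uniform $F$. Its proof is not formal; it uses the specific geometry of absorbable elements (an absorbable $s_2$ of canonical length $\ell_2$ spans an isometrically embedded equilateral triangle whose far corner lies at distance roughly $\ell_2/2$ from the axis) together with the Morse property applied to a $(2,C)$-quasi-geodesic running through that corner with endpoints near the axis, which bounds $\ell_2$ and hence the jump of the projection. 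Once this lemma is in hand, both loxodromy (a $\dAL$-geodesic from $1$ to $x^n$ has at most $\dAL(1,x^n)$ edges, each moving the projection by at most $\max\{F,\ell(x)\}$) and the WPD finiteness (translating the $\kappa$-bounds on $\dAL$-displacement into $E\kappa$-bounds on $\X$-displacement of the projections, then using strong constriction to trap $h$ in a bounded ball of the locally finite graph $\X$) go through essentially as you sketch.

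Two smaller points. First, for WPD you do not need, and should not rely on, the claim that an element coarsely fixing two far-apart axis points ``coarsely commutes with $g$'' and hence lies in a finite set by centralizer considerations; the paper's argument avoids centralizers entirely and simply shows $d_\X(1,h)\leqslant 3(C+E\kappa)$, so finiteness follows from local finiteness of $\X$. Second, quasi-isometric embeddedness of the orbit in a hyperbolic space does suffice for loxodromy, so that part of your reduction is fine --- but again it rests entirely on the unproved lower bound for $\dAL$.
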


The plan of this paper is as follows.
In Section~\ref{S:Garside} we review some, mostly standard, elements from Garside theory which we will need. 
In Section~\ref{S:Morse} we recall the definition of the Morse property and prove a first contraction property for the axes of Morse elements in Garside groups.
In Section~\ref{S:Projection} we define, in a Garside-theoretical fashion, a projection to the axis of any element satisfying a Garside-theoretical rigidity condition. For elements satisfying both the rigidity and the Morse condition, we strengthen our previous contraction result, and deduce that our projection is uniformly close to any closest-point projection.
Section~\ref{S:StrongContraction} contains a precise definition of the strong contraction property and the proof of Theorem~\ref{T:main}.
In Section~\ref{S:LoxOnCAL} we present the applications of the main result to the additional length graph.


\section{Garside groups}\label{S:Garside}

The notion of a Garside group stems from Garside's approach to solving the conjugacy problem in the braid groups~\cite{Garside}. Soon generalized to Artin-Tits groups of spherical type~\cite{BrieskornSaito,Deligne}, this approach was first axiomatized in~\cite{DehornoyParis,Dehornoy} and thoroughly studied over the first decade of the 2000s. The book~\cite{DDGKM} provides a  comprehensive account of what is now called ``Garside theory". 

\begin{definition}
Let $G$ be a group; $G$ is a \emph{Garside group} with \emph{Garside structure} $(G^{+},\Delta)$ if $G^{+}$ is a submonoid of $G$ such that $G^{+}\cap {G^{+}}^{-1}=\{1\}$ and there exists an element $\Delta\in G^{+}$ with the following properties:
\begin{itemize}
    \item[(1)] The partial order relations $\preccurlyeq$ and $\succcurlyeq$ on $G$ defined by 
    \begin{itemize}
        \item $x\preccurlyeq y$ ($x$ is a \emph{prefix} of $y$) if and only if $x^{-1}y\in G^{+}$,
        \item $x\succcurlyeq y$ ($y$ is a \emph{suffix} of $x$) if and only if $xy^{-1}\in G^{+}$  
    \end{itemize}
    are lattice orders on $G$; that is, all $x,y\in G$ admit a unique greatest common prefix $x\wedge y$, a unique greatest common suffix $x\wedge ^{\!\Lsh} y$, a unique least common right multiple $x\vee y$ and a unique least common left multiple $x\vee^{\!\Lsh} y$.
    \item[(2)] The set $\mathcal D=\{x\in G^{+}, x\preccurlyeq \Delta\}=\{x\in G^{+}, \Delta\succcurlyeq x\}$ generates $G^{+}$ as a monoid and $G$ as a group.
    \item[(3)] For all $x\in G^{+}\setminus\{1\}$,
    $$\|x\| = \sup\{k\,|\, \exists\, a_1,\ldots,a_k\in G^{+}\setminus\{1\}\  \text{such that}\  x = a_1 \cdots a_k\}<\infty.$$
\end{itemize}
\end{definition}

The elements of $G^{+}$ are called \emph{positive}, $\Delta$ is the \emph{Garside element} and the elements of $\mathcal D$ are called \emph{simple}. The elements $x$ of $G^{+}$ such that $\|x\|=1$ are called \emph{atoms} and they form a subset of $\mathcal D$.

Given a simple element $s$, its \emph{right-complement} $\partial(s)$ is defined by $\partial(s)=s^{-1}\Delta$ and its \emph{left-complement} is defined by $\partial^{-1}(s)=\Delta s^{-1}$; both $\partial(s)$ and $\partial^{-1}(s)$ belong to $\mathcal D$. 
Conjugation by $\Delta$ will be denoted by $\tau$ (that is, for $g\in G$, $\tau(g)=\Delta^{-1}g\Delta$); notice that for every simple element $s$, 
$\partial^2(s)=\partial(\partial(s))=\tau(s)$.

We shall make the additional assumption that $G$ is \emph{of finite type}, i.e.\ that the set of simple elements~$\mathcal D$ is finite. 
In this case, it follows that $\tau$ has finite order, and we will denote this order by~$e$. 
The element $\Delta^e$ is then central in~$G$.
We shall further assume that $G$ is \emph{$\Delta$-pure}. 
This property was defined in~\cite{Picantin}, and shown in~\cite[Theorem 39]{GebhardtTawn} to be equivalent to indecomposability as a Zappa-Sz\'ep product.
All the reader needs to know about $\Delta$-pure Garside groups of finite type is that their center
is cyclic, generated by~$\Delta^e$, i.e.\ $Z(G)=\gpDe$ (\cite{Picantin}). 
For instance, Artin-Tits groups of spherical type are Garside groups, and they are $\Delta$-pure if and only if the defining Coxeter graph is connected~\cite[Proposition 4.7]{Picantin}.

\begin{notation}
Throughout this paper, $G$ denotes a $\Delta$-pure Garside group of finite type. We denote $e$ the positive integer such that $Z(G)=\gpDe $. 
When we talk about Cayley graphs, it is always understood that the generating set is obtained from the set of simple elements~$\mathcal D$. 
We will use the notation $\Gamma$ for the Cayley graph $\Gamma(G,\mathcal D)$.
Also, we denote $\overline\Gamma = \Gamma(G/Z(G))$, the Cayley graph of $G$ modulo its center, with respect to the generators which are the images of $\mathcal D$ in~$G/Z(G)$. The corresponding graph metrics will be denoted $d_{\Gamma}$ and $d_{\overline\Gamma}$ respectively. 
\end{notation}

To each element of $G$ we associate three integer numbers as follows. 
\begin{definition}\cite[Section 1]{ElRifaiMorton}
Let $g\in G$. The \emph{infimum} of $g$ is defined by $\inf(g)=\max\{r\in \Z, \ \Delta^r\preccurlyeq g\}$, the \emph{supremum} of $g$ is defined by $\sup(g)=\min\{s\in \Z,\ g\preccurlyeq \Delta^s\}$ and the \emph{canonical length} of $g$ is defined by $\ell(g)=\sup(g)-\inf(g)$.
\end{definition}

It is well-known that each element of $G$ can be written uniquely as an irreducible fraction involving elements of $G^{+}$-- the letters $D$ and $N$ in the following stand for ``denominator" and ``numerator" respectively. 
\begin{lemma}
\cite[Lemma 4.4]{CharneyInjectivity}
\label{L:CharneySplittings}
Let $g\in G$. 
\begin{itemize}
    \item[(i)](Left-fraction). There is a unique pair of positive elements $(D_l(g), N_l(g))$ such that $D_l(g)\wedge N_l(g)=1$ and $g=D_l(g)^{-1}N_l(g)$. In particular, if 
    $c$ is any positive element such that $cg$ is positive, then $c\succcurlyeq D_l(g)$.
    \item[(ii)](Right-fraction). There is a unique pair of positive elements $(D_r(g), N_r(g))$ such that
    $D_r(g)\wedge^{\!\Lsh} N_r(g)=1$ and $g= N_r(g)D_r(g)^{-1}$.
    In particular, if $c$ is any positive element such that $gc$ is positive, then $D_r(g)\preccurlyeq c$. 
    \item[(iii)] We have the equalities $\inf(D_l(g))=\inf(D_r(g))$, $\sup(D_l(g))=\sup(D_r(g))$, 
    $\inf(N_l(g))=\inf(N_r(g))$ and $\sup(N_l(g))=\sup(N_r(g))$.
\end{itemize}
\end{lemma}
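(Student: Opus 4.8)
The plan is to prove (i) carefully, deduce (ii) by the left--right mirror symmetry (interchanging $\preccurlyeq\leftrightarrow\succcurlyeq$, $\wedge\leftrightarrow\wedge^{\!\Lsh}$, $\vee\leftrightarrow\vee^{\!\Lsh}$ and left with right multiplication), and reduce (iii) to a short computation of $\inf$ and $\sup$ on the reduced fraction. For \emph{existence} in (i), I would first write $g$ as \emph{some} quotient of positive elements: since $\Delta^{-\inf(g)}g\in G^{+}$ and $\Delta^{k}\in G^{+}$ for $k\geq 0$, multiplying by a large power of $\Delta$ gives $a,b\in G^{+}$ with $g=a^{-1}b$. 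Setting $\delta=a\wedge b$ and factoring $a=\delta\,D_l(g)$, $b=\delta\,N_l(g)$ produces positive $D_l(g),N_l(g)$ with $g=D_l(g)^{-1}N_l(g)$, and the left-invariance of $\wedge$ (namely $\delta\,(u\wedge v)=\delta u\wedge\delta v$, which follows from left-invariance of $\preccurlyeq$) gives $D_l(g)\wedge N_l(g)=\delta^{-1}(a\wedge b)=1$.

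The \emph{minimality clause} and \emph{uniqueness} I would get from one small sublemma: if $x u,\,x v\in G^{+}$ and $u\wedge v=1$, then $x\in G^{+}$. To prove it, write the (already established) reduced left fraction $x=D^{-1}N$; then $x u\in G^{+}$ gives $D\preccurlyeq Nu$ and $x v\in G^{+}$ gives $D\preccurlyeq Nv$, so $D\preccurlyeq (Nu)\wedge(Nv)=N(u\wedge v)=N$, whence $D\preccurlyeq D\wedge N=1$ and $x=N\in G^{+}$. Now, given positive $c$ with $cg\in G^{+}$, apply this to $x=cD_l(g)^{-1}$, $u=D_l(g)$, $v=N_l(g)$ (note $xu=c$ and $xv=cg$), concluding $x\in G^{+}$, i.e.\ $c\succcurlyeq D_l(g)$. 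Uniqueness is then immediate: if $(D,N)$ and $(D',N')$ are two reduced fractions for $g$, then $D$ and $D'$ are each valid multipliers, so $D\succcurlyeq D'$ and $D'\succcurlyeq D$, forcing $D=D'$ and then $N=Dg=D'g=N'$.

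For (iii) I would express all four quantities purely in terms of $\inf(g)$ and $\sup(g)$, which are visibly independent of the left/right viewpoint. Two observations do the bulk of the work. First, coprimality and atoms give the ``$\inf=0$'' cases: if $g\notin G^{+}$ then $D_l(g)\neq 1$, so it has an atom $a$ as a left divisor; were $\Delta\preccurlyeq N_l(g)$ we would get $a\preccurlyeq\Delta\preccurlyeq N_l(g)$, contradicting $D_l(g)\wedge N_l(g)=1$; hence $\inf(N_l(g))=\max(0,\inf g)$ in all cases, and the mirror argument gives the same for $N_r$. Second, the minimality clause yields the denominator formulas
\[
\sup\bigl(D_l(g)\bigr)=\max(0,-\inf g),\qquad \inf\bigl(D_l(g)\bigr)=\max(0,-\sup g),
\]
and their right-handed analogues: for the nontrivial inequality $\sup(D_l(g))\leq -\inf(g)$ one checks that with $u=\sup(D_l(g))\geq 1$ the element $\Delta^{u-1}g$ cannot be positive, since otherwise minimality would give $D_l(g)\preccurlyeq\Delta^{u-1}$ (using that a right divisor of $\Delta^{u-1}$ is also a left divisor, via $D^{-1}\Delta^{k}=\tau^{k}(\Delta^{k}D^{-1})$), contradicting $\sup(D_l(g))=u$; the reverse inequality comes from $\Delta^{u}D_l(g)^{-1}=\tau^{-u}(D_l(g))^{-1}\Delta^{u}\in G^{+}$. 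Finally, inversion closes the loop: applying uniqueness to $g^{-1}=N_l(g)^{-1}D_l(g)$ gives $D_l(g^{-1})=N_l(g)$, $N_l(g^{-1})=D_l(g)$ (and likewise on the right), and combined with $\inf(g^{-1})=-\sup g$, $\sup(g^{-1})=-\inf g$ this upgrades the two formulas above into expressions for all of $\inf,\sup$ of $N_l,D_l,N_r,D_r$ in terms of $\inf g,\sup g$; the four equalities of (iii) then follow by matching left with right.

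I expect the main obstacle to be exactly the $\inf$/$\sup$ bookkeeping in (iii), not its ideas: the arguments hinge on freely translating between the conditions $\Delta^{r}\preccurlyeq g$, $\Delta^{k}g\in G^{+}$, $g\Delta^{k}\in G^{+}$, and $D_l(g)\Delta^{r}\preccurlyeq N_l(g)$, and each translation commutes a power of $\Delta$ past a positive element through $\tau^{\pm 1}$, where it is easy to lose a $\tau$ or to confuse a prefix with a suffix. The one genuinely delicate point is making the minimality clause of (i)---rather than an explicit normal-form manipulation---carry the sharp inequality $\sup(D_l(g))\leq -\inf(g)$; getting that step right is what keeps the whole proof of (iii) short.
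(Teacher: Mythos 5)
The paper does not prove this lemma at all: it is imported verbatim as \cite[Lemma 4.4]{CharneyInjectivity}, so there is no in-paper argument to compare yours against. Judged on its own, your proof is correct and self-contained from the Garside axioms. The existence step (write $g=a^{-1}b$ with $a=\Delta^{-\inf g}$, divide out $\delta=a\wedge b$, use left-invariance of $\wedge$) is standard and sound; your sublemma ``$xu,xv\in G^{+}$ and $u\wedge v=1$ imply $x\in G^{+}$'' is the right engine, and its application with $x=cD_l(g)^{-1}$ cleanly delivers both the minimality clause and uniqueness. The identities you need for the bookkeeping all check out: $\tau^{k}(\Delta^{k}D^{-1})=D^{-1}\Delta^{k}$ does show that prefixes and suffixes of powers of $\Delta$ coincide, and the same kind of computation shows $\inf$ and $\sup$ are unchanged under the left--right mirror, which is what makes the reduction of (ii) to (i) and the matching in (iii) legitimate. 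Your formulas $\inf(N_l(g))=\max(0,\inf g)$, $\sup(D_l(g))=\max(0,-\inf g)$, together with the inversion symmetry $D_l(g^{-1})=N_l(g)$, in fact prove slightly more than (iii): they identify all eight quantities explicitly in terms of $\inf(g)$ and $\sup(g)$. Two cosmetic points: in the sublemma the line ``$D\preccurlyeq D\wedge N=1$'' should read ``$D\preccurlyeq N$, hence $D=D\wedge N=1$''; and the atom argument should say explicitly that every nontrivial positive element admits an atom as a prefix by axiom (3) and that atoms lie in $\mathcal D$, both of which the paper records. Neither affects correctness.
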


Moreover, every element of $G$ can be decomposed as follows. Recall that a pair of simple elements $(s,t)\in \mathcal D^2$ is \emph{left-weighted} if $\partial(s)\wedge t=1$ and \emph{right-weighted} if $\partial^{-1}(t)\wedge^{\!\Lsh} s=1$. 

\begin{proposition}[\cite{Adyan},\cite{Dehornoy}, Section~3]\label{P:LNF}
Let $g\in G$. Let $p=\inf(g)$, $r=\ell(g)$.
\begin{itemize}
\item[(i)] 
There exists a unique decomposition $g=\Delta^p s_1\cdots s_r$, where
$s_1,\ldots, s_r\in \mathcal D\setminus\{1,\Delta\}$ and for every $1\leqslant i\leqslant r-1$, $(s_i,s_{i+1})$ is left-weighted. 
\item[(ii)] Similarly, there exists a 
unique decomposition $g=s'_r\cdots s'_1\Delta^p$, where $s'_1,\ldots, s'_r\in \mathcal D\setminus\{1,\Delta\}$ 
and for every $1\leqslant i\leqslant r-1$, $(s'_{i+1},s'_{i})$ is right-weighted. 
\end{itemize}
These decompositions are called \emph{left normal form} and \emph{right normal form} of $g$, respectively. 
\end{proposition}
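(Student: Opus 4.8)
The plan is to reduce everything to a factorisation statement about a single positive element, to produce the factorisation by a greedy ``extract the largest simple left-divisor'' procedure, and then to isolate one lattice identity from which both the weightedness of the factors and the uniqueness of the whole decomposition follow. For part~(i), I would first set $p=\inf(g)$; since $\Delta^p\preccurlyeq g$, the element $x:=\Delta^{-p}g$ is positive, with $\inf(x)=0$ and $\sup(x)=\sup(g)-p=\ell(g)=r$, and $g=\Delta^p x$, so it suffices to factor $x$. To build a factorisation I would put $s_1:=x\wedge\Delta$, which lies in $\mathcal D$ since $s_1\preccurlyeq\Delta$ and which is nontrivial because every positive element $\neq 1$ is left-divisible by an atom and atoms lie in $\mathcal D$. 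Writing $x=s_1 x_1$ with $x_1\in G^{+}$ and iterating, the process halts because $\|x_1\|<\|x\|$ at each step, by condition~(3) of the definition; this yields $x=s_1\cdots s_k$ with each $s_i\in\mathcal D\setminus\{1\}$.

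The engine of the argument is the identity that, for any left-weighted sequence of proper simples, $s_1\cdots s_k\wedge\Delta=s_1$. I would prove it by induction on $k$: left-multiplication by any group element is an order-automorphism of $(G,\preccurlyeq)$, hence preserves meets, so writing $\Delta=s_1\partial(s_1)$ gives $s_1\cdots s_k\wedge\Delta=s_1\bigl(s_2\cdots s_k\wedge\partial(s_1)\bigr)$; the inductive hypothesis $s_2\cdots s_k\wedge\Delta=s_2$ together with $\partial(s_1)\preccurlyeq\Delta$ forces $s_2\cdots s_k\wedge\partial(s_1)\preccurlyeq s_2\wedge\partial(s_1)=1$, using left-weightedness of $(s_1,s_2)$. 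A local version of the same computation shows the greedy factors are left-weighted: if $s=y\wedge\Delta$ and $t=(s^{-1}y)\wedge\Delta$, then $st\preccurlyeq y$ gives $s\preccurlyeq st\wedge\Delta\preccurlyeq y\wedge\Delta=s$, so $st\wedge\Delta=s$, i.e.\ $\partial(s)\wedge t=1$. That no factor is $\Delta$ follows because $s_1\neq\Delta$ (as $\inf(x)=0$) and, if some $s_{i+1}=\Delta$, then $\partial(s_i)\wedge\Delta=\partial(s_i)=1$ would force $s_i=\Delta$; induction from $s_1$ then excludes $\Delta$ throughout. Finally $x\preccurlyeq\Delta^k$ gives $\sup(x)\leqslant k$, while an iterated version of the same meet computation yields $x\wedge\Delta^{k-1}=s_1\cdots s_{k-1}\neq x$, so $x\not\preccurlyeq\Delta^{k-1}$ and $k=\sup(x)=r$.

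Uniqueness of~(i) is then immediate from the head identity: any left-weighted factorisation $x=t_1\cdots t_m$ into proper simples satisfies $t_1=t_1\cdots t_m\wedge\Delta=x\wedge\Delta=s_1$, and cancelling $s_1$ on the left and inducting on length forces $m=r$ and $t_i=s_i$ for all $i$. For part~(ii) I would run the mirror argument using suffixes in place of prefixes: right-multiplication is an order-automorphism for $\succcurlyeq$, so replacing $x\wedge\Delta$ by the greatest common suffix $x\wedge^{\!\Lsh}\Delta$, the complement $\partial$ by $\partial^{-1}$, and ``left-weighted'' by ``right-weighted'' produces and characterises the right normal form by the same reasoning; that the two forms share the same exponent $p$ and length $r$ is guaranteed by Lemma~\ref{L:CharneySplittings}(iii). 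The one genuinely delicate point is the head identity $s_1\cdots s_k\wedge\Delta=s_1$: it is what converts the a priori non-canonical greedy factorisation into a canonical one, simultaneously delivering weightedness and uniqueness, while its generalisation $x\wedge\Delta^{j}=s_1\cdots s_{j}$ pins down the number of factors. Everything else is routine manipulation in the lattice $(G,\preccurlyeq)$.
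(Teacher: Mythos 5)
The paper offers no proof of this proposition --- it is quoted from Adyan and from Dehornoy's \emph{Groupes de Garside}, Section~3 --- so I am comparing your argument with the standard one in those sources. Your architecture (greedy extraction of $x\wedge\Delta$, the head identity $s_1\cdots s_k\wedge\Delta=s_1$, the local computation showing the greedy factors are left-weighted, exclusion of $\Delta$-factors, and uniqueness by cancelling heads) is exactly the classical route, and those steps are correct as written.

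There is, however, a genuine gap at the point where you identify the number of greedy factors with $r=\sup(x)$. You assert that ``an iterated version of the same meet computation'' yields $x\wedge\Delta^{j}=s_1\cdots s_j$, but the iteration does not close. Writing $\Delta^{j}=s_1\cdot\partial(s_1)\Delta^{j-1}$ and using that left multiplication preserves meets gives $x\wedge\Delta^{j}=s_1\bigl(s_2\cdots s_k\wedge\partial(s_1)\Delta^{j-1}\bigr)$, and to continue you need $s_2\cdots s_k\wedge\partial(s_1)\Delta^{j-1}\preccurlyeq\Delta^{j-1}$. The only information your head-identity argument provides is $s_2\cdots s_k\wedge\partial(s_1)=1$, and the implication ``$v\preccurlyeq cw$ and $v\wedge c=1$ imply $v\preccurlyeq w$'' is false in a Garside monoid: in $B_3$ with $\Delta=\sigma_1\sigma_2\sigma_1$, take $v=\sigma_2$, $c=\sigma_1\sigma_2$, $w=\sigma_1$, so that $v\preccurlyeq cw=\Delta$ and $v\wedge c=1$ but $v\not\preccurlyeq w$. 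So the step is not a routine iteration; it is precisely where the standard proofs invoke the one nontrivial lattice lemma, namely $\bigl(AB\bigr)\wedge\Delta=\bigl(A(B\wedge\Delta)\bigr)\wedge\Delta$ for positive $A,B$ (proved via the observation that $A\vee\Delta\preccurlyeq A\Delta$, since $A\Delta=\Delta\tau(A)$ is a common right multiple, which forces the relevant complements to be simple). From that lemma one deduces that the greedy algorithm applied to any product of $m$ simple elements terminates in at most $m$ steps, hence $k\leqslant\sup(x)$, which combined with your easy bound $x\preccurlyeq\Delta^k$ gives $k=r$. You should either prove this lemma or restructure the count of factors through it; as written, the claim $x\wedge\Delta^{k-1}=s_1\cdots s_{k-1}$ is unsupported. (A second, minor, remark: you call the head identity ``the one genuinely delicate point,'' but your proof of it is fine --- the delicate point is its iterated form.)
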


Considering the latter normal forms together with the fractional decompositions of Lemma~\ref{L:CharneySplittings} yields a slightly different notion of normal form. 

\begin{definition}\cite[Proposition 3.9]{Dehornoy}
Let $g\in G$. If ${\inf(g)<0<\sup(g)}$ and $D_l(g)=a_1\cdots a_r$ and $N_l(g)=b_1\cdots b_s$ are left normal forms, 
the \emph{left mixed normal form} of $g$ is the writing 
$g=a_r^{-1}\cdots a_1^{-1}b_1\cdots b_s$; 
similarly, the \emph{right mixed normal form} is the writing $g=b'_s\cdots b'_1a_1'^{-1}\cdots a_r'^{-1}$, where $a'_r\cdots a'_1$ and $b'_s\cdots b'_1$ are the respective right normal forms of $D_r(g)$ and $N_r(g)$. 
If $\inf(g)\geqslant 0$, then the left (right, respectively) mixed normal form of~$g$ coincides with 
the left (right, respectively) normal form of~$g$ given by Proposition~\ref{P:LNF}. 
If $\sup(g)\leqslant 0$, then the left (right, respectively) mixed normal form of~$g$ is the formal inverse of the left (right, respectively) normal form of $g^{-1}$ given by Proposition~\ref{P:LNF}.
\end{definition}

These mixed normal forms have an important geometric meaning:

\begin{lemma}\cite[Lemma 3.1]{CharneyMeier}\label{L:CharneyGeod}
In $\Gamma$, the Cayley graph of $G$ with respect to $\mathcal D$, 
mixed normal forms are geodesics. 
\end{lemma}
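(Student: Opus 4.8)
The plan is to identify the word metric on $\Gamma$ explicitly and show that the mixed normal form attains it. Write $\mathcal D^{\pm1}=\mathcal D\cup\mathcal D^{-1}$ for the generating set. First I observe that, since $\Gamma$ is vertex-transitive, it suffices to prove that the word spelling out a mixed normal form of $g$ is a shortest word representing $g$: if $w=u_1\cdots u_k$ is a shortest word of generators representing $g$, then for each $j$ the triangle inequality gives $d_\Gamma(1,u_1\cdots u_j)\geqslant k-(k-j)=j$, while the prefix itself realises the reverse bound, so every vertex visited lies at distance exactly its index and the path is geodesic. The claim is then that the word length is
\[
d_\Gamma(1,g)=\max\{\sup(g),0\}-\min\{\inf(g),0\}.
\]

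The lower bound is the clean engine. For any factorisation $g=u_1\cdots u_k$ with $u_i\in\mathcal D^{\pm1}$, let $p$ be the number of letters in $\mathcal D$ and $n=k-p$ the number in $\mathcal D^{-1}$. Using that $\sup$ is subadditive and $\inf$ superadditive under products, together with the elementary values $\sup(s)\leqslant 1$, $\inf(s)\geqslant 0$ for a simple $s$ and $\sup(s^{-1})\leqslant 0$, $\inf(s^{-1})\geqslant -1$ for its inverse (the latter because $\Delta s^{-1}=\tau(\partial(s))$ is positive), I obtain $\sup(g)\leqslant p$ and $\inf(g)\geqslant -n$. Hence $p\geqslant\max\{\sup(g),0\}$ and $n\geqslant\max\{-\inf(g),0\}$, so $k=p+n$ is at least the right-hand side above. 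This proves $d_\Gamma(1,g)\geqslant\max\{\sup(g),0\}-\min\{\inf(g),0\}$ unconditionally.

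Next I compute the length of the mixed normal form and match it to this quantity. In the two pure cases the mixed normal form is the (possibly inverted) left normal form, of length $\sup(g)$ when $\inf(g)\geqslant 0$ and $-\inf(g)$ when $\sup(g)\leqslant 0$, in agreement with the formula. In the genuinely mixed case $\inf(g)<0<\sup(g)$ the word $a_r^{-1}\cdots a_1^{-1}b_1\cdots b_s$ has $r=\sup(D_l(g))$ negative letters and $s=\sup(N_l(g))$ positive letters, so it remains to show $\sup(N_l(g))=\sup(g)$ and $\sup(D_l(g))=-\inf(g)$. One inequality in each pair is easy: applying subadditivity of $\sup$ and superadditivity of $\inf$ to $g=D_l(g)^{-1}N_l(g)$, and using $\sup(D_l(g)^{-1})=-\inf(D_l(g))\leqslant 0$ and $\inf(N_l(g))\geqslant 0$ (both $D_l(g),N_l(g)$ being positive), gives $\sup(g)\leqslant\sup(N_l(g))$ and $\inf(g)\geqslant-\sup(D_l(g))$.

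The reverse inequalities are the main obstacle, and this is exactly where the reducedness condition $D_l(g)\wedge N_l(g)=1$ is indispensable. To prove $\sup(N_l(g))\leqslant\sup(g)$ I set $k=\sup(g)$ and use $g\preccurlyeq\Delta^k$ to write $D_l(g)\Delta^k=N_l(g)c$ with $c$ positive, so that $N_l(g)\preccurlyeq D_l(g)\Delta^k$; the task reduces to showing that $N_l(g)\preccurlyeq D_l(g)\Delta^k$ together with $N_l(g)\wedge D_l(g)=1$ forces $N_l(g)\preccurlyeq\Delta^k$, whence $\sup(N_l(g))\leqslant k$. I expect to establish this implication by induction on the canonical length of $N_l(g)$, peeling off the initial simple factor $h=N_l(g)\wedge\Delta$, along which coprimality descends since $h\wedge D_l(g)\preccurlyeq N_l(g)\wedge D_l(g)=1$. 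The structural mechanism is that reducedness makes the sign change in the mixed normal form locally geodesic: the neighbouring heads satisfy $a_1\wedge b_1\preccurlyeq D_l(g)\wedge N_l(g)=1$, so no common factor of $\Delta$ and no cancellation can be manufactured across the junction. The second identity $\sup(D_l(g))=-\inf(g)$ then follows by applying the first to $g^{-1}$, whose reduced left fraction is $N_l(g)^{-1}D_l(g)$. Granting both identities, the mixed normal form has length exactly $\max\{\sup(g),0\}-\min\{\inf(g),0\}$; since the word length is at most this length and, by the lower bound, at least this value, the two coincide and the mixed normal form is a geodesic.
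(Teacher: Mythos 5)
The paper offers no proof of this lemma---it is quoted directly from Charney--Meier---so your proposal has to be measured against the standard argument, which is indeed the one you are reconstructing: establish the word-length formula $d_\Gamma(1,g)=\max\{\sup(g),0\}-\min\{\inf(g),0\}$, obtain the lower bound from subadditivity of $\sup$ and superadditivity of $\inf$ over the letters, and match it against the letter count of the mixed normal form. Your reduction to shortest words, your lower bound, your computation of the length of the mixed normal form, and your deduction of $\sup(D_l(g))=-\inf(g)$ from $\sup(N_l(g))=\sup(g)$ applied to $g^{-1}$ are all correct.

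The gap is exactly where you flag it: the implication ``$N\preccurlyeq D\Delta^k$ and $N\wedge D=1$ imply $N\preccurlyeq\Delta^k$'' carries the entire content of the hard direction, and the induction you sketch does not close. After peeling off $h=N\wedge\Delta$ and writing $N=hN'$, the inductive hypothesis would have to be applied to $N'\preccurlyeq h^{-1}D\Delta^k=\bigl(\partial(h)\tau(D)\bigr)\Delta^{k-1}$, so the coprimality that must propagate is $N'\wedge\partial(h)\tau(D)=1$. What you actually establish is $h\wedge D=1$ (a statement about the head, not the tail) and, from left-weightedness, $N'\wedge\partial(h)=1$; neither yields the required condition, since a common prefix of $N'$ and $\partial(h)\tau(D)$ need not be a prefix of $\partial(h)$. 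Note also that the naive general form ``$a\preccurlyeq bc$ and $a\wedge b=1$ imply $a\preccurlyeq c$'' is false (in $B_3$ take $a=\sigma_2$, $b=\sigma_1\sigma_2$, $c=\sigma_1$, so that $bc=\Delta$), so some special feature of $c=\Delta^k$ must enter. A clean way to close the gap using only tools already quoted in the paper: from $N\preccurlyeq D\Delta^k$ and $D\preccurlyeq D\Delta^k$ one gets $N\vee D\preccurlyeq D\Delta^k$; writing $N\vee D=Da=Nb$, minimality of the right lcm forces $a\wedge^{\!\Lsh}b=1$, so $g=D^{-1}N=ab^{-1}$ is the reduced right fraction and $N_r(g)=a$; cancelling $D$ on the left gives $a\preccurlyeq\Delta^k$, and Lemma~\ref{L:CharneySplittings}(iii) then yields $\sup(N)=\sup(N_l(g))=\sup(N_r(g))=\sup(a)\leqslant k$.
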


Finally, we shall need one more Garside-theoretical definition: 

\begin{definition}\label{D:Rigid}
Let $x\in G$ with right normal form $x= x_r\cdots x_1\Delta^p$.
We say that~$x$ is \emph{right-rigid} if its preferred simple suffix $\mathfrak p^{\!\Lsh}(x):=\tau^p(x_1)\wedge^{\!\Lsh}\partial^{-1}(x_r)$ is trivial. In particular, if $\inf(x)=0$, then for $k\geqslant 1$, the right normal form of $x^k$ consists of the concatenation of~$k$ copies of the right normal form of $x$. 
\end{definition}

Our aim is, of course, to study the geometry of $G$ modulo its center $Z(G)=\gpDe $. 
However, it is technically far more convenient not to study the Cayley graph $\overline \Gamma = \Gamma(G/Z(G))$ directly, but a very closely related graph, which we will denote~$\X$. 
This graph~$\X$ is the 1-skeleton of ``Bestvina's normal form complex" considered in~\cite{Bestvina,CMW}, and it has been described previously in~\cite{CalvezWiestCurve} and \cite[Chapter VIII, Section 3.2]{DDGKM}. We recall the definition:

\begin{notation}
We denote~$\X$ the quotient of $\Gamma=\Gamma(G)$ by the right-action of~$\gpD$: 
$$ \X = \Gamma(G,\mathcal D)/\gpD$$
\begin{itemize}
    \item The vertices of $\X$ are  the left-cosets of $G$ modulo $\gpD$ : $\{g\langle\Delta\rangle,\  g\in G\}$. 
    Each vertex $g\langle \Delta\rangle$ of $\X$ possesses a unique \emph{distinguished representative} $\underline g$, which is by definition the representative satisfying~$\inf(\underline g)=0$:
    given $g\in G$, we have $\underline g = g\Delta^{-\inf(g)}$.
    We denote by $\ast$ the vertex $\gpD$, whose distinguished representative is the trivial element of $G$.
    \item Two vertices $g\gpD$ and $h\gpD$ of $\X$ are connected by an edge if there is a proper simple element $s\in \mathcal D$ such that $\underline g s\in h\gpD$; 
    this is equivalent to the existence of a proper simple element $t$  such that $\underline h t\in g\gpD$. 
\end{itemize}
\end{notation}

The following provides more precise information about adjacent vertices of $\mathcal X$:

\begin{lemma}\cite[Lemma 3.4]{Bestvina} 
\label{L:AdjacencyInX}
Suppose that $g\gpD$ and $h\gpD$ are adjacent vertices of $\X$. 
Then there exists a proper simple element $s\in \mathcal D\setminus\{1,\Delta\}$ such that one of the following holds:
\begin{itemize}
    \item $\underline g s= \underline h$ (and in this case we have $\underline h\,\partial s = \underline g\Delta$), or
    \item $\underline h s= \underline g$ (and in this case we have $\underline g\,\partial s = \underline h \Delta$)
\end{itemize}
\end{lemma}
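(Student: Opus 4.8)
The plan is to reduce the whole statement to a dichotomy on a single infimum, after which everything becomes formal bookkeeping. By the definition of the edges of $\X$, adjacency of $g\gpD$ and $h\gpD$ furnishes a proper simple element $s\in\mathcal D\setminus\{1,\Delta\}$ with $\underline g s\in h\gpD$. Since $\underline h$ is the unique representative of its coset with $\inf(\underline h)=0$, and $\inf(\underline h\,\Delta^m)=\inf(\underline h)+m=m$, this membership says exactly that $\underline g s=\underline h\,\Delta^{m}$ with $m=\inf(\underline g s)$. Thus the entire lemma amounts to pinning down the possible values of $m$.

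The key step, and the only one requiring genuine Garside input, is to show $m\in\{0,1\}$. Here I would invoke the standard fact that right-multiplication by a simple element neither decreases the infimum nor increases it by more than one; since $\inf(\underline g)=0$, this forces $m\in\{0,1\}$. The lower bound $m\geqslant 0$ is clear from $\underline g\preccurlyeq\underline g s$; for the upper bound one checks that $\Delta^{2}\preccurlyeq\underline g s$ is impossible, because writing $s^{-1}=\partial(s)\,\Delta^{-1}$ and using $\Delta w\Delta^{-1}=\tau^{-1}(w)$ would exhibit $\Delta$ as a prefix of $\underline g$, contradicting $\inf(\underline g)=0$. This is the heart of the argument, and it is exactly where the normalization $\inf(\underline g)=0$ built into the distinguished representative pays off.

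It then remains to read off the two alternatives. If $m=0$, then $\underline g s$ has infimum $0$, hence is itself the distinguished representative of its coset, so $\underline g s=\underline h$; this is the first case. If $m=1$, then $\underline g s=\underline h\,\Delta$, and I pass to the left-complement $s':=\partial^{-1}(s)=\Delta s^{-1}\in\mathcal D$, which is proper precisely because $s$ is; a one-line computation gives $\underline h\,s'=\underline g s\,\Delta^{-1}\Delta s^{-1}=\underline g$, which is the second case with $s'$ in the role of $s$. Finally, both supplementary identities follow purely formally from $s\,\partial s=\Delta$: in the first case $\underline h\,\partial s=\underline g s\,\partial s=\underline g\Delta$, and in the second $\underline g\,\partial s'=\underline h\,s'\,\partial s'=\underline h\Delta$. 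The only point to stay alert to is that the simple element witnessing the edge need not be the one appearing in the conclusion, since in the case $m=1$ it must be replaced by its left-complement; beyond that, no further work is required once the infimum dichotomy is in hand.
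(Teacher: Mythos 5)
Your proof is correct. The paper itself gives no argument for this lemma --- it is quoted directly from Bestvina's paper (Lemma 3.4 there) --- so there is nothing internal to compare against; your reduction to the dichotomy $\inf(\underline g s)\in\{0,1\}$, with the upper bound obtained by writing $s^{-1}=\partial(s)\Delta^{-1}$ and conjugating by $\Delta$, together with the passage to the left-complement $\partial^{-1}(s)$ in the case $m=1$, is exactly the standard argument and all the bookkeeping checks out.
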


\begin{notation}\label{Not.}
We denote by $d_\X$ the graph metric on the graph $\X$; for $g,h\in G$, we sometimes denote $d_{\mathcal X}(g,h)$ for $d_{\mathcal X}(g\gpD,h\gpD)$.
Note that the groups $G$ and $G/Z(G)$ act isometrically by left-translations on $\X$ by $g\cdot (g'\gpD)=(gg')\gpD$. 
\end{notation}

The spaces $\X$ and~$\overline \Gamma$ are very closely related:
\begin{proposition}\label{P:XandGammaBarAreQI}
There is an \emph{isometric} embedding $\iota\co \X \hookrightarrow \overline \Gamma$ with
$\lfloor\frac{e}{2}\rfloor$-dense image (i.e.\ every vertex of~$\overline \Gamma$ is at distance at most $\lfloor\frac{e}{2}\rfloor$ from a vertex belonging to~$\iota(\X)$). 
In particular, $\iota$ is a quasi-isometry.
\end{proposition}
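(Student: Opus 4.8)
The plan is to define the embedding on vertices by sending a coset to the class of its distinguished representative, namely $\iota(g\gpD)=\underline g\,\gpDe$, and to analyze it in tandem with the natural ``forgetful'' quotient map $q\co\overline\Gamma\to\X$ given on vertices by $q(a\gpDe)=a\gpD$ (which is legitimate since $\gpDe=Z(G)\subseteq\gpD$). The map $\iota$ is well defined because $\underline g$ depends only on the coset $g\gpD$, and one checks immediately that $q\circ\iota=\mathrm{id}_{\X}$, since $q(\underline g\,\gpDe)=\underline g\,\gpD=g\gpD$. Injectivity of $\iota$ is then automatic, but can also be seen directly: if $\underline g^{-1}\underline h\in\gpDe$, writing $\underline h=\underline g\,\Delta^{ke}$ and using $\inf(\underline g)=\inf(\underline h)=0$ together with $\inf(x\Delta^m)=\inf(x)+m$ forces $k=0$, whence $\underline g=\underline h$.

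For the isometry statement I would establish the two inequalities separately. To get $d_{\overline\Gamma}(\iota x,\iota y)\leqslant d_\X(x,y)$ I would lift paths from $\X$ to $\overline\Gamma$: by Lemma~\ref{L:AdjacencyInX}, every edge of $\X$ joining $g\gpD$ to $h\gpD$ arises from a proper simple $s\in\mathcal D\setminus\{1,\Delta\}$ with $\underline g\, s=\underline h$ or $\underline h\, s=\underline g$; in either case $\underline g\,\gpDe$ and $\underline h\,\gpDe$ are joined in $\overline\Gamma$ by (the image of) the generator $s$. Hence an $\X$-geodesic of length $n$ lifts to a path of the same length between the distinguished classes. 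For the reverse inequality I would verify that $q$ is $1$-Lipschitz: an edge $a\gpDe\sim as\gpDe$ of $\overline\Gamma$ (with $s\in\mathcal D$) is sent by $q$ to the pair $a\gpD,\ as\gpD$, which coincide when $s\in\{1,\Delta\}$ and are adjacent in $\X$ when $s$ is proper simple. Combining $1$-Lipschitzness of $q$ with $q\circ\iota=\mathrm{id}_\X$ yields $d_\X(x,y)\leqslant d_{\overline\Gamma}(\iota x,\iota y)$, and together the two inequalities give equality, so $\iota$ is an isometric embedding.

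For the density claim, I would write an arbitrary vertex of $\overline\Gamma$ as $a\gpDe$ and set $m=\inf(a)$, so that $a=\underline a\,\Delta^{m}$ and the image point $\underline a\,\gpDe=\iota(a\gpD)$ lies in $\iota(\X)$. By left-invariance of $d_{\overline\Gamma}$, the distance from $a\gpDe$ to this image point equals $d_{\overline\Gamma}(\Delta^m\gpDe,\gpDe)$. Since $\Delta\in\mathcal D$ is a generator and $\Delta^e\in Z(G)$, one may reduce $m$ modulo $e$ and travel using $\Delta^{\pm1}$; if $r\equiv m\pmod e$ with $0\leqslant r<e$, this distance is at most $\min(r,\,e-r)\leqslant\lfloor\frac e2\rfloor$. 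Thus $\iota(\X)$ is $\lfloor\frac e2\rfloor$-dense. Finally, an isometric embedding with cobounded image is in particular a quasi-isometry, which establishes the last assertion.

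The only genuinely delicate point, I expect, is the reverse distance inequality, that is, confirming that the forgetful map $q$ is honestly distance non-increasing. The subtlety is the asymmetry between the two graphs: right multiplication by $\Delta$ is a nontrivial move in $\overline\Gamma$ but trivial in $\X$, which is precisely why the $\Delta$-edges collapse under $q$ and, dually, why $\iota$ fails to be surjective. This same asymmetry is exactly what produces the $\lfloor\frac e2\rfloor$ defect in the density statement, so the bookkeeping of the $\Delta$-powers via the $\inf$ function is the heart of the argument.
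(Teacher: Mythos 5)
Your proposal is correct and follows essentially the same route as the paper: both define $\iota(g\gpD)=\underline g\,\gpDe$ together with the forgetful map $\overline\Gamma\to\X$, observe that both maps are $1$-Lipschitz with composition the identity on $\X$ to get the isometric embedding, and obtain $\lfloor\frac{e}{2}\rfloor$-density by measuring $d_{\overline\Gamma}(g\gpDe,\underline g\,\gpDe)$ via the power of $\Delta$ reduced modulo $e$. Your write-up merely spells out a few details (injectivity, the edge-by-edge lifting, the $\min(r,e-r)$ computation) that the paper leaves implicit.
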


\begin{proof}
If $g,h\in G$ represent adjacent vertices of~$\X$, then we know from Lemma~\ref{L:AdjacencyInX} that $\underline g$ and $\underline h$ represent adjacent vertices in the Cayley graph $\Gamma$ of~$G$, and thus also in the Cayley graph $\overline \Gamma$ of $G/\gpDe$.
This means that the map 
$$
\iota\co \{\text{vertices of }\X\} \hookrightarrow \{\text{vertices of }\overline\Gamma\}, \ g\gpD\mapsto \underline g\gpDe 
$$
sends adjacent vertices to adjacent vertices, and thus induces a well-defined and 1-Lipschitz map of graphs $\iota\co \X\hookrightarrow \overline\Gamma$.

In the other direction, there is a natural projection map $$p\co \{\text{vertices of }\overline \Gamma\} \to \{\text{vertices of }\X\}, \ 
g\gpDe \mapsto g\gpD$$ 
which induces a well-defined map of graphs $p\co \overline\Gamma \to \X$.
Both $\iota$ and~$p$ are 1-Lipschitz, and $p\circ\iota={\rm id}_\X$. This implies that $\iota$ is an isometric embedding. 

Now we look at the opposite composition 
$$\iota\circ p\co \overline\Gamma \to \overline\Gamma, \ g\gpDe  \mapsto \underline g\gpDe $$ 
We observe that $d_{\overline\Gamma}(g\gpDe ,\underline g\gpDe ) < \lfloor\frac{e}{2}\rfloor$, which means that $\iota\circ p$ is at distance~$\lfloor\frac{e}{2}\rfloor$ from ${\rm id}_{\overline\Gamma}$; thus the image of~$\iota$ is $\lfloor \frac{e}{2}\rfloor$-dense.
\end{proof}

We now recall from~\cite{CalvezWiestCurve} the notion of a preferred path between two vertices in~$\X$. 

\begin{definition}\label{D:PreferredPaths}
Given any vertex $g\gpD$ of $\X$, let $s_1\cdots s_r$ be the left normal form of $\underline g$; 
the \emph{preferred path} between $\ast$ and $g\gpD$ is the path  
$$\ast,\, s_1\gpD,\, \ldots\, ,\, (s_1\cdots s_n)\gpD=g\gpD.$$ 
The preferred path between the vertices $g\gpD$ and $h\gpD$ of $\X$ is 
the $\underline g$-left translate of the preferred path between $\ast$ and $\underline g^{-1}h \gpD$.
It is denoted by $A(g\gpD, h\gpD)$, or simply $A(g,h)$, keeping in mind that the definition does not depend on the particular representatives $g$ and $h$.
\end{definition}
 
We record some basic properties of the preferred paths (recall that for $g,h\in G$, $g\wedge h$ is the unique greatest common prefix of $g$ and $h$):
\begin{proposition}\label{P:PreferredPath}
\begin{itemize}
\item[(i)] Let $g,h\in G$; let $p=\underline g \wedge \underline h$.
 The preferred path $A(g,h)$ is the concatenation of the preferred paths $A(g,p)$ and $A(p,h)$. 
 \item[(ii)] Preferred paths are symmetric: for all $g,h\in G$, $A(g,h)$ is the reverse of $A(h,g)$. 
 \item[(iii)] Preferred paths are preserved by left-translation: for all $g,h,k\in G$, 
 $A(kg,kh)=kA(g,h)$. 
 \item[(iv)] Preferred paths are geodesics in $\mathcal X$ and for all $g,h\in G$, $d_{\mathcal X}(g,h)=d_{\Gamma}(\underline g,\underline h)$.
 \item[(v)] Balls in~$\X$ are convex: 
 if $g,h\in G$ and $k\gpD\in A(g,h)$, then $d_\X(\ast,k\gpD)\leqslant \max(d_\X(\ast,g\gpD),d_\X(\ast,h\gpD))$.
\end{itemize}
\end{proposition}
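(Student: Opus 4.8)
The plan is to reduce all five statements to an explicit description of the vertices lying on a preferred path, and to extract that description from a single divisibility computation. Throughout, set $p=\underline g\wedge\underline h$ and factor $\underline g=p\,g'$, $\underline h=p\,h'$, so that $g'\wedge h'=1$; since $\Delta\preccurlyeq g'$ would force $\Delta\preccurlyeq\underline g$, both $g'$ and $h'$ have infimum $0$, and I put $a=\sup(g')=\ell(g')$, $b=\sup(h')=\ell(h')$ and $u=\underline g^{-1}\underline h=g'^{-1}h'$. I would prove (iii) first, as it lets me transport every question to a preferred path emanating from $\ast$. Unwinding the definition, $A(kg,kh)=\underline{kg}\cdot A(\ast,\underline{kg}^{-1}kh\,\gpD)$, and using $\underline{kg}=k\underline g\,\Delta^{j}$ with $j=\inf(g)-\inf(kg)$ together with the fact that $\tau$ sends left normal forms to left normal forms, a direct check on the vertices $s_1\cdots s_i\mapsto\tau^{j}(s_1)\cdots\tau^{j}(s_i)$ shows $A(\ast,\Delta^{-j}v\,\gpD)=\Delta^{-j}\!\cdot A(\ast,v\,\gpD)$ for $v=\underline g^{-1}h$; feeding this back gives $A(kg,kh)=k\cdot A(g,h)$.

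The heart of the argument, which I would isolate as a lemma, is the following. Recall that if $\underline x=s_1\cdots s_r$ is the left normal form then $s_1\cdots s_i=\underline x\wedge\Delta^i$, so the $i$-th vertex of $A(\ast,x\,\gpD)$ is $(\underline x\wedge\Delta^i)\,\gpD$. Writing $\overline{g'}=g'^{-1}\Delta^a$ (a positive element of supremum $a$ with $g'\,\overline{g'}=\Delta^a$ and infimum $0$), one checks $u\Delta^a=\overline{g'}\,\tau^a(h')$, and I claim this positive element is $\underline u$ and satisfies $\underline u\wedge\Delta^{a}=\overline{g'}$. Indeed $\overline{g'}\preccurlyeq\underline u$ and $\overline{g'}\preccurlyeq\Delta^a$, so writing $\underline u\wedge\Delta^a=\overline{g'}\,d$ the positive element $d$ obeys $d\preccurlyeq\tau^a(h')$ (from $\underline u$) and $d\preccurlyeq\overline{g'}^{-1}\Delta^a=\tau^a(g')$ (from $\Delta^a$), whence $d\preccurlyeq\tau^a(g')\wedge\tau^a(h')=\tau^a(g'\wedge h')=1$. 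This is the only place where $g'\wedge h'=1$ and the $\tau$-equivariance of $\wedge$ enter, and I expect the bookkeeping of the $\tau$-twists and of the various infima and suprema to be the main obstacle. Combining $\underline u\wedge\Delta^a=\overline{g'}$ with $\Delta^i\preccurlyeq\Delta^a$ gives $\underline u\wedge\Delta^i=\overline{g'}\wedge\Delta^i$ for all $i\le a$, and the case $i=1$ confirms $\inf(\underline u)=0$, so that $\sup(\underline u)=a+b$.

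Substituting $\underline u\wedge\Delta^i=\overline{g'}\wedge\Delta^i$ into (iii) and simplifying with $g'\,\overline{g'}=\Delta^a$ (using that left multiplication preserves $\wedge$ and that right multiplication by a power of $\Delta$ is an automorphism of the prefix lattice) yields, for $0\le i\le a$, the clean formula $V_i:=\underline g(\underline u\wedge\Delta^i)\gpD=p\,(g'\wedge\Delta^{a-i})\,\gpD$; in particular $V_a=p\,\gpD$. Since for the pair $(g,p)$ the common prefix is again $p$ and the ``$h'$-part'' is trivial, these vertices are exactly $A(g,p)$. Applying the identical description to $A(h,g)$ and invoking (ii) then identifies the remaining vertices as $V_{a+k}=p\,(h'\wedge\Delta^k)\,\gpD$, which are exactly the reverse of $A(h,p)$, i.e.\ $A(p,h)$; concatenating at $V_a=p\,\gpD$ proves (i). Statement (v) is now immediate: every vertex of $A(g,h)$ is the coset of a prefix of $\underline g$ or of $\underline h$, so its distinguished representative has supremum at most $\max(\sup\underline g,\sup\underline h)=\max(d_\X(\ast,g),d_\X(\ast,h))$.

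For (ii) I would argue directly, without the factorization. With $n=\ell(u)=a+b$, $\underline u=u\Delta^{a}$ and $\underline{u^{-1}}=u^{-1}\Delta^{b}$, the $j$-th vertex of $A(h,g)$ is $\underline h(\underline{u^{-1}}\wedge\Delta^{j})\gpD$ and the $(n-j)$-th vertex of $A(g,h)$ is $\underline g(\underline u\wedge\Delta^{n-j})\gpD$; left-translating the latter by $\underline h^{-1}$ turns it into $(\Delta^a\wedge u^{-1}\Delta^{n-j})\gpD$, and right multiplication by $\Delta^{b+j-n}$ (an automorphism of the prefix lattice) carries this to $(u^{-1}\Delta^{b}\wedge\Delta^{j})\gpD$, which is the $\underline h^{-1}$-translate of the former. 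Hence the two paths are reverses of one another. Finally, for (iv) the preferred path has length $n=a+b$, which equals $d_\Gamma(\underline g,\underline h)=d_\Gamma(1,u)$ because the left mixed normal form of $u=g'^{-1}h'$ is geodesic by Lemma~\ref{L:CharneyGeod}; and the function $x\gpD\mapsto\ell(\underline x)$ changes by at most $1$ along each edge of $\X$ (multiplying by a simple element changes $\inf$ and $\sup$ each by at most $1$), so that $d_\X(g,h)=d_\X(\ast,u\gpD)\ge\ell(\underline u)=a+b$. This forces equality, so the preferred path is a geodesic and $d_\X(g,h)=d_\Gamma(\underline g,\underline h)$.
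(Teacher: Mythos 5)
Your proof is correct, but it takes a more self-contained route than the paper. For parts (i) and (ii), and for the explicit description of the vertices along $A(g,h)$ used in (v), the paper simply cites Lemmas~4 and~5 of~\cite{CalvezWiestCurve}; you instead re-derive everything from the single lattice identity $\underline u\wedge\Delta^{a}=g'^{-1}\Delta^{a}$ (with $u=g'^{-1}h'$, $a=\sup(g')$), whose proof via $d\preccurlyeq\tau^a(g')\wedge\tau^a(h')=\tau^a(g'\wedge h')=1$ is clean and correct, and your direct lattice-automorphism argument for the symmetry (ii) is likewise valid. Part (iii) is essentially the paper's computation. For (iv) both arguments use Lemma~\ref{L:CharneyGeod} to compute the length of the preferred path, but for the lower bound on $d_\X$ you use that $x\gpD\mapsto\ell(\underline x)$ is $1$-Lipschitz on $\X$, whereas the paper converts a hypothetical shorter path in $\X$ into a shorter word in $\Gamma$ representing $\underline g^{-1}\underline h$; both are fine. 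What your approach buys is independence from the earlier paper and an explicit, reusable description $V_i=p\,(g'\wedge\Delta^{a-i})\gpD$, $V_{a+k}=p\,(h'\wedge\Delta^{k})\gpD$ of the vertices, at the cost of somewhat heavier lattice bookkeeping.

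One small point you should make explicit: you use $\sup(\underline u)=a+b$ (equivalently $\sup(g'^{-1}h')=\sup(h')$ and $\inf(g'^{-1}h')=-\sup(g')$ when $g'\wedge h'=1$) both to locate the endpoint of $A(g,h)$ and, in the proof of (ii), to write $\underline{u^{-1}}=u^{-1}\Delta^{b}$. Your remark that ``the case $i=1$ confirms $\inf(\underline u)=0$'' only gives $\inf(u)=-a$; the companion statement $\sup(u)=b$ is the standard fact behind the mixed normal form (it is Lemma~\ref{L:CharneySplittings}(iii) together with the definition of the left mixed normal form), or can be obtained by running your wedge computation on $u^{-1}=h'^{-1}g'$. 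This is easily filled in and does not affect the correctness of the argument.
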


\begin{proof}
(i) and (ii) correspond to \cite[Lemma 4]{CalvezWiestCurve} and \cite[Lemma 5]{CalvezWiestCurve}, respectively. 
For (iii), write the left normal form of $\underline{\underline g^{-1}h}$ as $z_1\cdots z_r$, so that $A(g,h)$ is by definition the path 
$$g\gpD=\underline g\gpD,\, \underline g z_1\gpD,\, \ldots\, ,\, \underline g z_1\cdots z_r\gpD = h\gpD.$$
Note that $\underline{kg}= k\underline g\Delta^j$ (with $j=\inf(g)-\inf(kg)$); 
therefore the left normal form of $\underline{(\underline{kg})^{-1}kh}$ is $\tau^j(z_1)\cdots \tau^j(z_r)$.
So by definition, $A(kg,kh)$ is the path 
$$kg\gpD= \underline{kg}\gpD , \,\underline{kg}\tau^j(z_1)\gpD, \,\ldots\, ,\, \underline{kg} \tau^j(z_1\cdots z_r) \gpD,$$
but for all $1\leqslant i \leqslant r$, we have $$\underline{kg}\tau^j(z_1\cdots z_i)= k\underline g \Delta^{j}\Delta^{-j}z_1\cdots z_i\Delta^j= k\underline g z_1\cdots z_i\Delta^{j},$$
thus $A(kg,kh)$ is the path
$$ k g\gpD, \, k \underline g z_1\gpD, \,\ldots \, ,\, k \underline g z_1\cdots z_r\gpD,$$
the $k$-left translate of $A(g,h)$, as claimed.

To see (iv), recall first that mixed normal forms are geodesics in $\Gamma$ (Lemma~\ref{L:CharneyGeod}). Let $g,h\in G$. It is shown in the proof of \cite[Lemma 4]{CalvezWiestCurve} that the path $A(g,h)$ in~$\X$ has the exact length of the mixed normal form of $\underline g ^{-1}\underline h$, say $r$. Suppose that there was a shorter path in $\mathcal X$ between $g\gpD$ and $h\gpD$, that is a sequence of vertices $g\gpD=g_0\gpD, g_1\gpD,\ldots, g_k\gpD=h\gpD$, with $k<r$. 
Then by Lemma~\ref{L:AdjacencyInX} there are simple elements $s_1,\ldots, s_k$ such that
for $1\leqslant i\leqslant k$, we have $\underline{g_i}= \underline{g_{i-1}} s_i$ or $\underline{g_i}= \underline{g_{i-1}}s_i^{-1}$; setting $t_i$ to be $s_i$ or $s_i^{-1}$ accordingly, we obtain that 
$\underline h=\underline {g_k}=\underline{g_0}t_1\cdots t_{k}=\underline g t_1\cdots t_k$, where each $t_i$ is either a simple element or the inverse of a simple element. This contradicts the fact that mixed normal forms are geodesics in~$\Gamma$. 

Now, let us prove (v). In view of (iv), the distances involved are the respective lengths of the left normal forms of $\underline k$, $\underline g$ and $\underline h$. 
Let $p= \underline g \wedge \underline h$; write $\underline g = p a $, $\underline h =p b$ (with $a,b\in G^{+}$) and let $a=a_1\cdots a_r$, $b=b_1\cdots b_s$ the respective left normal forms. By the proof of \cite[Lemma 4]{CalvezWiestCurve}, the distinguished representatives of the vertices along the path $A(g,h)$ are (in this order) 
$$\underline g = pa_1\cdots a_r,\;\ldots\;,pa_1,p, pb_1,\;\ldots,\; pb_1\cdots b_s=\underline h.$$
Any of these is a prefix of $\underline g $ or $\underline h$ thus its left normal form is at most as long as that of $\underline g$ or $\underline h$ and the claim is proved.  
\end{proof}

\begin{lemma}[Fellow traveller property]\label{L:AFellowTraveller}
Suppose that $g,g', h, h'\in G$, such that $d_{\X}(g,g')=1$ and $d_{\X}(h,h')=1$. 
Then the set of vertices along the path $A(g,h)$ and the set of vertices along the path $A(g,h')$ in~$\X$ are at Hausdorff distance~1. 
Also, the analogue statement holds for the paths $A(g,h)$ and $A(g',h)$.
\end{lemma}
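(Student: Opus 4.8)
The plan is to reduce, via the structural properties already established, to a single Garside-theoretic computation showing that appending one simple element perturbs every normal-form prefix by at most one simple factor.

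First I note the two assertions are equivalent. Since preferred paths are symmetric (Proposition~\ref{P:PreferredPath}(ii)), the vertex set of $A(g,h)$ equals that of $A(h,g)$, so comparing $A(g,h)$ with $A(g',h)$ is the same as comparing $A(h,g)$ with $A(h,g')$, i.e.\ the first assertion with the fixed endpoint~$h$ playing the role of the basepoint. So I treat only $A(g,h)$ versus $A(g,h')$. Using left-equivariance (Proposition~\ref{P:PreferredPath}(iii)) and the fact that $G$ acts by isometries on~$\X$, I translate by $\underline g^{-1}$ to assume $g=\ast$; this preserves both the Hausdorff distance and the hypothesis $d_{\X}(h,h')=1$. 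Finally, by Lemma~\ref{L:AdjacencyInX} there is a proper simple element~$s$ with $\underline h s=\underline{h'}$ or $\underline{h'}s=\underline h$; as Hausdorff distance is symmetric I may rename so that $\underline{h'}=\underline h s$.

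Now write $u=\underline h$ and $v=\underline{h'}=us$, both of infimum~$0$, and set $a=\sup(u)$, $b=\sup(v)$; since $s\preccurlyeq\Delta$ we have $u\preccurlyeq v\preccurlyeq u\Delta$, whence $b\in\{a,a+1\}$. By Definition~\ref{D:PreferredPaths} the vertices of $A(\ast,h)$ are the cosets of the normal-form prefixes of~$u$, and I use the standard identity $u_1\cdots u_i=u\wedge\Delta^i$ to record these prefixes as $u\wedge\Delta^i$ for $0\le i\le a$; likewise the vertices of $A(\ast,h')$ are the $v\wedge\Delta^i$ for $0\le i\le b$. The plan is to match the $i$-th prefix of~$u$ with the $i$-th prefix of~$v$. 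The crux is the claim that for $1\le i\le a$ the element $x_i:=(u\wedge\Delta^i)^{-1}(v\wedge\Delta^i)$ is simple. To prove it I first establish the meet identity $u\Delta\wedge\Delta^i=(u\wedge\Delta^{i-1})\Delta$: the right-hand side clearly divides both $u\Delta$ and $\Delta^i$; conversely $\Delta$ divides both $u\Delta=\Delta\tau(u)$ and~$\Delta^i$, hence divides their meet~$m$, and writing $m=\Delta m'$ and left-cancelling $\Delta$ gives $m'\preccurlyeq\tau(u)\wedge\Delta^{i-1}=\tau(u\wedge\Delta^{i-1})$, i.e.\ $m\preccurlyeq(u\wedge\Delta^{i-1})\Delta$. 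From $v\preccurlyeq u\Delta$ I then get $v\wedge\Delta^i\preccurlyeq(u\wedge\Delta^{i-1})\Delta$. Writing $P=u\wedge\Delta^i=u_1\cdots u_i$ and $P'=u\wedge\Delta^{i-1}=u_1\cdots u_{i-1}$, so $P=P'u_i$, and setting $w=P'^{-1}(v\wedge\Delta^i)$, the inequality says $w\preccurlyeq\Delta$, i.e.\ $w$ is simple; hence $x_i=u_i^{-1}w\preccurlyeq u_i^{-1}\Delta=\partial(u_i)\preccurlyeq\Delta$, so $x_i$ is simple. As $u\wedge\Delta^i$ and $v\wedge\Delta^i$ both have infimum~$0$, Proposition~\ref{P:PreferredPath}(iv) gives $d_{\X}(u\wedge\Delta^i,\,v\wedge\Delta^i)=d_{\Gamma}(u\wedge\Delta^i,\,v\wedge\Delta^i)\le 1$.

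Assembling the pieces: for $0\le i\le a$ the matched vertices $u\wedge\Delta^i$ and $v\wedge\Delta^i$ lie at $\X$-distance at most~$1$, so every vertex of $A(\ast,h)$ lies within~$1$ of $A(\ast,h')$ and, for these indices, conversely. If $b=a+1$ the only unmatched vertex of $A(\ast,h')$ is its endpoint $v\wedge\Delta^{a+1}=v=\underline{h'}$, which is at distance $d_{\X}(h,h')=1$ from the endpoint $u=\underline h$ of $A(\ast,h)$. Hence the two vertex sets are at Hausdorff distance at most~$1$, as claimed. I expect the main obstacle to be precisely the crux claim: one must verify that appending a \emph{single} simple element shifts each normal-form prefix by only one simple factor, and the delicate point is the meet identity $u\Delta\wedge\Delta^i=(u\wedge\Delta^{i-1})\Delta$ together with the bound $x_i\preccurlyeq\partial(u_i)$, which is exactly where the hypothesis $d_{\X}(h,h')=1$ (a single simple element) enters.
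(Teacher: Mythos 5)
Your proof is correct, and the reduction steps (symmetry, left-translation to $g=\ast$, Lemma~\ref{L:AdjacencyInX}) are exactly those of the paper; where you diverge is in the core computation. The paper identifies the connecting simple elements by citing the normal-form update algorithm of Gebhardt--Gonz\'alez-Meneses \cite[Proposition 1.2]{GebhardtGM}: with $s_{r+1}=s$ and the recursion $t_i=\partial(z_i)\wedge s_{i+1}$, $s_i=z_it_i$, $z'_{i+1}=t_i^{-1}s_{i+1}$, one reads off $z'_1\cdots z'_i=z_1\cdots z_it_i$, so the $i$-th prefixes of $\underline h$ and $\underline h s$ differ by the simple element $t_i$. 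You instead prove the same statement intrinsically: using the characterization $u_1\cdots u_i=u\wedge\Delta^i$ of normal-form prefixes, you show via the meet identity $u\Delta\wedge\Delta^i=(u\wedge\Delta^{i-1})\Delta$ that $x_i=(u\wedge\Delta^i)^{-1}(v\wedge\Delta^i)$ satisfies $x_i\preccurlyeq\partial(u_i)$; your $x_i$ is precisely the paper's $t_i$. Your route is more self-contained (no external algorithm, and the bound $x_i\preccurlyeq\partial(u_i)$ falls out cleanly from the lattice structure), while the paper's route additionally produces the actual normal form of $\underline h s$ and the local picture of Figure~\ref{F:FellowTraveller}. Two small points to tighten: you should state that you are using the standard fact $u_1\cdots u_i=u\wedge\Delta^i$ (it is not proved in the paper, though the reliance on \cite{GebhardtGM} there is a comparable appeal to the literature); and to conclude that $x_i$ is \emph{simple} you need $x_i\in G^{+}$ in addition to $x_i\preccurlyeq\Delta$ --- this is immediate from $u\wedge\Delta^i\preccurlyeq v\wedge\Delta^i$ (which follows from $u\preccurlyeq v$ and $u\wedge\Delta^i\preccurlyeq\Delta^i$), but it should be said, since $x_i\preccurlyeq\Delta$ alone does not bound $d_\Gamma$ by~$1$.
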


\begin{proof}
By symmetry of preferred paths (Proposition~\ref{P:PreferredPath}(ii)), the second statement follows from the first. After a left translation, we may assume $g=1$, so we must show the claim for $A(\ast,h\gpD)$ and $A(\ast, h'\gpD)$. 
We may assume from the hypothesis (and Lemma~\ref{L:AdjacencyInX}) that there exists a simple element $s$ such that 
$\underline h s= \underline{h'}$. 
Let $z_1\cdots z_r$ be the left normal form of $\underline h$. 
The vertices along $A(\ast,h\gpD)$ are $z_1\cdots z_i \gpD$, for $0\leqslant i \leqslant r$.

\begin{figure}[htb]
\begin{center}
\def\svgwidth{12cm}
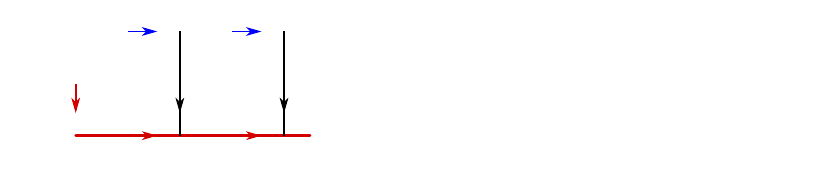
\end{center}
\caption{The normal form paths of $\underline h$ (in blue) and of $\underline h s$ (in red) stay at Hausdorff distance~1 in the Cayley graph of~$G$.}\label{F:FellowTraveller}
\end{figure}

Now, the left normal form of $\underline h'=\underline h s$ may be obtained, for instance, from the algorithm given in \cite[Proposition 1.2]{GebhardtGM}, which is illustrated in Figure~\ref{F:FellowTraveller}. 
It goes as follows. Set $s_{r+1}=s$; for $i=r,\ldots,1$, define recursively the simple elements $t_i=\partial(z_i)\wedge s_{i+1}$, $s_i=z_it_i$ and $z'_{i+1}=t_i^{-1}s_{i+1}$. Finally define $z'_1=s_1$. Then $z'_1\cdots z'_r z'_{r+1}$ (or $z'_1\cdots z'_r$, if $z'_{r+1}=1$) is the left normal form of $\underline h$. 

We then see that for $i=1,\ldots,r$, we have $z'_1\cdots z'_i=z_1\cdots z_it_{i}$, hence showing that the $(i+1)$st vertex along $A(\ast, h'\gpD)$ is at distance 1 from the $(i+1)$st vertex along $A(\ast,h\gpD)$.
\end{proof}

\begin{lemma}[Concatenation of normal form paths]\label{L:ConcatQuasiGeod} 
Let $g,h,k\in G$ be such that 
$\underline g \preccurlyeq \underline h \preccurlyeq \underline k$. 
Then 
\begin{itemize}
    \item[(i)] The concatenation of the paths $A(g,h)$ and $A(h,k)$ is a $(2,0)$-quasi-geodesic connecting $g\gpD$ to $k\gpD$.
    \item[(ii)] Suppose in addition that the paths $A(g,h)$ and $A(g,k)$ have the same length; then the concatenation of $A(h,k)$ and $A(k,g)$ is a $(2,0)$-quasi-geodesic connecting $h\gpD$ to $g\gpD$.
\end{itemize}
\end{lemma}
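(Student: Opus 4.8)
The plan is to reduce both statements to a single lower bound at the corner, computing all distances through Proposition~\ref{P:PreferredPath}(iv), which turns $d_\X$ into $d_\Gamma$ and, for a positive element $x$, into $d_\Gamma(1,x)=\sup x$. Throughout I will use three elementary facts about $\sup$: it is monotone for both $\preccurlyeq$ and $\succcurlyeq$ (a prefix or a suffix of $x$ has supremum at most $\sup x$), it is subadditive ($\sup(xy)\le\sup x+\sup y$), and word length of a positive element equals its supremum. By left-invariance of $d_\X$ and of preferred paths (Proposition~\ref{P:PreferredPath}(iii)) I may translate by $\underline g^{-1}$ and assume $\underline g=1$; then $\underline h,\underline k$ are positive with $\inf=0$ and $1\preccurlyeq\underline h\preccurlyeq\underline k$ (the relation survives, and all distinguished representatives keep $\inf=0$ because $\Delta\not\preccurlyeq\underline k$). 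Writing the left normal forms $\underline h=h_1\cdots h_p$ and $\underline h^{-1}\underline k=c_1\cdots c_q$, the path $A(g,h)$ runs through the prefixes $h_1\cdots h_i$ and $A(h,k)$ through $\underline h c_1\cdots c_j$; all of these are prefixes of $\underline k$, hence distinguished representatives. Since a pair of vertices lying on a single geodesic piece realises exactly the path distance, in either statement only a pair straddling the corner needs checking, and the automatic bound $d_\X(u,v)\le\ell_P(u,v)$ (the path length) reduces everything to showing $d_\X(u,v)\ge\tfrac12\ell_P(u,v)$.

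For (i) the whole path is $\preccurlyeq$-monotone, so I take $u=h_1\cdots h_i$ on $A(g,h)$ and $v=\underline h c_1\cdots c_j$ on $A(h,k)$, with $u\preccurlyeq v$ and $\ell_P(u,v)=(p-i)+j$. Here $u^{-1}v=(h_{i+1}\cdots h_p)(c_1\cdots c_j)$ is a product of two positive elements of supremum $p-i$ and $j$ (a tail of a normal form and a normal form). The first factor is a prefix and the second a suffix of $u^{-1}v$, so monotonicity of $\sup$ gives $d_\X(u,v)=\sup(u^{-1}v)\ge\max(p-i,\,j)\ge\tfrac12\ell_P(u,v)$, and $P$ is a $(2,0)$-quasi-geodesic.

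For (ii) the path $A(h,k)\cdot A(k,g)$ is a \emph{peak}: it ascends from $\underline h$ to $\underline k$, then descends along the prefixes $k_1\cdots k_b$ of the normal form of $\underline k$ back to $1$. The new ingredient is the equal-length hypothesis $\sup\underline h=\sup\underline k=:p$; combined with $\underline h\preccurlyeq u\preccurlyeq\underline k$ it squeezes $\sup u=p$ for every vertex $u$ on the ascent, so the ascent lies on the sphere of radius $p$ about $g$. I check a pair $u$ on the ascent and $v=k_1\cdots k_b$ on the descent; put $X=d_\X(u,k)=\sup(u^{-1}\underline k)$ and $Y=d_\X(v,k)=\sup(v^{-1}\underline k)=p-\sup v$, so $\ell_P(u,v)=X+Y$. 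With $m=u\wedge v$, $\alpha=m^{-1}u$, $\beta=m^{-1}v$ (both positive, $\inf=0$, since $u,v\preccurlyeq\underline k$), the mixed normal form of $u^{-1}v=\alpha^{-1}\beta$ gives $d_\X(u,v)=\sup\alpha+\sup\beta$. Now $\sup\alpha\ge\sup u-\sup m\ge p-\sup v=Y$ (using $\sup u=p$ and $\sup m\le\sup v$); and from the single identity $\alpha\,(u^{-1}\underline k)=m^{-1}\underline k=\beta\,(v^{-1}\underline k)$ one reads, via subadditivity, $X=\sup(u^{-1}\underline k)\le\sup(m^{-1}\underline k)=\sup\!\big(\beta\,(v^{-1}\underline k)\big)\le\sup\beta+Y$, i.e. $\sup\beta\ge X-Y$. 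Hence $d_\X(u,v)\ge Y+\max(0,X-Y)=\max(X,Y)\ge\tfrac12(X+Y)$, as required.

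The main obstacle is precisely this corner estimate in (ii): because the path turns around at the top vertex $\underline k$, the endpoints $u,v$ are no longer $\preccurlyeq$-comparable, the clean product argument of (i) is unavailable, and one must stop the descent from back-tracking along the ascent. The equal-length hypothesis is essential exactly here — it forces the ascent onto a sphere ($\sup u\equiv p$), which produces the inequality $\sup\alpha\ge Y$, while $\sup\beta\ge X-Y$ comes from reading the positive element $m^{-1}\underline k$ in the two ways $\alpha\,(u^{-1}\underline k)$ and $\beta\,(v^{-1}\underline k)$. I expect the only delicate points to be verifying these two inequalities and the bookkeeping that every element in play genuinely has $\inf=0$ (so that word length equals $\sup$); the reduction and part (i) are routine.
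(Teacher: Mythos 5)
Your proof of part (i) is essentially the paper's: both bound the distance across the corner below by $\max(r-k+1,l)$ using that a tail of a normal form is a prefix, and a normal form an appropriately positioned suffix, of the positive element $u^{-1}v$, whose supremum dominates both. For part (ii), however, you take a genuinely different route. The paper translates the whole picture by $\underline h^{-1}$ and observes that the new distinguished representatives satisfy $\underline{h'}=1\preccurlyeq\underline{k'}=b\preccurlyeq\underline{g'}=a^{-1}\Delta^{\sup(a)}$ --- the equal-length hypothesis $\sup(a)=\sup(c)$ enters exactly in showing $c^{-1}\Delta^{\sup(c)}$ is positive, hence $b\preccurlyeq\underline{g'}$ --- so that the ``peak'' becomes a $\preccurlyeq$-monotone chain and (ii) reduces outright to (i). You instead keep the peak and do the corner estimate by hand: setting $m=u\wedge v$ and reading $d_\X(u,v)=\sup\alpha+\sup\beta$ off the mixed normal form of $u^{-1}v=\alpha^{-1}\beta$, you extract $\sup\alpha\geqslant Y$ from the fact that the equal-length hypothesis pins every ascent vertex to the sphere of radius $p$, and $\sup\beta\geqslant X-Y$ from the two factorizations of $m^{-1}\underline k$. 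Both arguments are correct; your supporting claims (that all the relevant elements have $\inf=0$, that $\sup$ is monotone under prefixes and suffixes and subadditive, and that the within-leg pairs are handled by geodesity of preferred paths) all check out. The paper's reduction is shorter and reuses (i) verbatim; your computation is longer but makes visible \emph{where} the equal-length hypothesis acts (it prevents back-tracking of the descent along the ascent) and gives the sharper conclusion $d_\X(u,v)\geqslant\max(X,Y)$ rather than just $\frac12(X+Y)$.
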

\begin{proof}
Let $a=\underline g^{-1}\underline h$, $b=\underline h^{-1}\underline k$ and $c=\underline g^{-1}\underline k$. 
Under our hypothesis, $\inf(a)=\inf(b)=\inf(c)=0$ and $c=ab$. Denote by $a=a_1\cdots a_r$ and $b=b_1\cdots b_s$ the respective left normal forms. 

(i) Denote $\alpha$ the concatenation of paths under consideration. Let $p\gpD$ be the $k$th vertex on $A(g,h)$, for some $1\leqslant k\leqslant r$; let $q\gpD$ be the $l+1$st vertex on $A(h,k)$, for some $1\leqslant l\leqslant s$. 
The distance $d_{\mathcal X}(p,q)$ is the length of $A(p,q)$, that is $\sup(\underline{\underline p^{-1}q})=\sup(a_{k}\cdots a_rb_1\cdots b_{l})$. 
As the subwords $a_{k}\cdots a_r$ and $b_1\cdots b_{l}$ are in left normal form, we obtain $d_{\mathcal X}(p,q)\geqslant \max\{r-k+1,l\}$. Note that the portion of $\alpha$ between $p\gpD$ and $q\gpD$ has length $r-k+1+l$. We then obtain
$$\frac{1}{2}(r-k+1+l)\leqslant \max\{r-k+1,l\}\leqslant d_{\mathcal X}(p,q)\leqslant r-k+1+l\leqslant 2(r-k+1+l),$$
hence showing that $\alpha$ is a $(2,0)$-quasi-geodesic.

(ii) The extra hypothesis is saying that $\sup(a)=\sup(c)$. Consider the vertices $g'\gpD=\underline h^{-1} g\gpD$, $h'\gpD=\underline h^{-1}h\gpD=\ast$ and $k'\gpD=\underline h^{-1}k\gpD$; 
our goal is to show that the concatenation of $A(h',k')$ and $A(k',g')$ is a $(2,0)$-quasi-geodesic connecting $h'\gpD$ to $g'\gpD$. 
Let us compute the respective distinguished representatives: we have $\underline h'=1$, $\underline k'=b$ and 
$$\underline g'=\underline{\underline  h^{-1}g}=\underline{a^{-1}}=a^{-1}\Delta^{\sup(a)}.$$ 
Moreover, $a^{-1}\Delta^{\sup(a)}=b(c^{-1}\Delta^{\sup(a)})=b (c^{-1}\Delta^{\sup(c)})$ and $c^{-1}\Delta^{\sup(c)}$
is positive, 
so $b\preccurlyeq \underline g'$. We thus have $\underline h'\preccurlyeq \underline k'\preccurlyeq \underline g'$ and we are in a position to apply Lemma~\ref{L:ConcatQuasiGeod}(i) to obtain the desired claim.
\end{proof}


\section{Morse elements in Garside groups}\label{S:Morse}

In this section we will define Morse elements in Garside groups, and prove some preliminary results on them.
First, we set some notation and we recall the definition. We recall that throughout, $G$~denotes a $\Delta$-pure Garside group of finite type.

\begin{notation}
Let $g\in G$. 
\begin{itemize}
    \item The \emph{axis} of $g$ in $\mathcal X$ is the set of vertices $\{g^t\gpD,\ t\in \mathbb Z\}$. 
    \item The \emph{axis} of $g$ in $\overline\Gamma$ is the set of vertices $\{g^t\gpD,\ t\in \mathbb Z\}$.
    \end{itemize}
\end{notation}

\begin{remark}
We caution the reader that two elements $g$ and $g\Delta$ may have completely different axes in $G / Z(G)$, and thus in $\X$. For instance in braid groups, it may very well happen that some element $g$ is pseudo-Anosov while $g\Delta$ is reducible.
\end{remark}

\begin{definition}\label{D:Morse1}
(a) Let $(X,d)$ be a metric space, and let $\gamma\co \mathbb Z\to X$ be a map. We say that $\gamma$ (or equivalently its image) is \emph{Morse} if 
\begin{enumerate}
    \item[(i)] $\gamma$ is a quasi-isometric embedding, and
    \item[(ii)] for every pair $(\Lambda,K)$ with $\Lambda\geqslant 1$, $K\geqslant 0$, there is a number $M^{(\Lambda,K)}$ (the Morse constant) such that every $(\Lambda,K)$-quasi-geodesic in $X$ connecting two points $\gamma(i), \gamma(j)$ ($i,j\in \mathbb Z$) of the image of~$\gamma$ remains in the $M^{(\Lambda,K)}$-neighbourhood of the image of~$\gamma$ in~$X$.
\end{enumerate}

(b) Let $H$ be a group generated by a finite set~$S$. An element $h\in H$ is \emph{Morse} if the map $\gamma\co \mathbb Z\to Cay(H,S)$, $t\mapsto h^t$ is Morse in the sense of~(a). 
In this situation, given $\Lambda\geq 1$ and $K\geq 0$ we denote by $M_{h}^{(\Lambda,K)}$ the associated Morse constant.
\end{definition}

\begin{notation}
Since many quasi-geodesics in this paper will be $(2,0)$-quasi-geodesics, we will use the simplified notation $M_h=M_h^{(2,0)}$.
\end{notation}

Since a $\Delta$-pure Garside group of finite type has an infinite-cyclic center, it cannot contain any Morse elements. We adapt the definition as follows, keeping in mind Proposition~\ref{P:XandGammaBarAreQI} which says that the projection $\overline \Gamma\to \X$, $g\gpDe  \mapsto g\gpD$ is a quasi-isometry and the fact that the Morse property is invariant under quasi-isometry. 

\begin{definition}\label{D:Morse2}
We say that an element $g$ of a $\Delta$-pure Garside group of finite type~$G$ is \emph{Morse} if any of the following equivalent condition holds:
\begin{itemize}
    \item[(i)] the image of $g$ in $G / Z(G)$ is Morse in the sense of Definition~\ref{D:Morse1}(b), 
    \item[(ii)] the axis of $g$ in $\overline\Gamma$ is Morse in the sense of Definition~\ref{D:Morse1}(a).
    \item[(iii)] the axis of $g$ in $\mathcal X$ is Morse in the sense of Definition~\ref{D:Morse1}(a).
    \end{itemize}
\end{definition}



\begin{example}
Pseudo-Anosov braids are Morse;
indeed, their projections to the group $B_n/Z(B_n)$, which is a finite-index subgroup of the mapping class group of an $(n+1)$-times punctured sphere, are pseudo-Anosov mapping classes in this group.
By~\cite{Behrstock}, these are Morse.
\end{example}

The following is a key technical result -- recall that the notion of right-rigidity is introduced in Definition~\ref{D:Rigid}.

\begin{proposition}\label{P:PowerConjToRigid}
Every Morse element of $G$ has a power which is conjugate to a right-rigid element.
Moreover, this right-rigid element can be required to be of the form $\Delta^{e\cdot m}x$, with $m\in\mathbb Z$, $\inf(x)=0$, and $x$ right-rigid.
\end{proposition}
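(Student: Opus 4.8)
The plan is to split the statement into the existence of \emph{some} rigid power-conjugate and the normalization recorded in the ``moreover'', to dispose of the latter by a short reduction, and to obtain the former by feeding the Morse hypothesis into the summit-set machinery of Garside theory.

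First I would dispose of the ``moreover''. Suppose it is already known that $g^{N_0}$ is conjugate to a right-rigid element $y$. Rigid elements satisfy $\inf(y^k)=k\inf(y)$ and remain right-rigid under powers: the case $\inf(y)=0$ is exactly the content of Definition~\ref{D:Rigid}, and the general case follows by conjugating out the leading $\Delta$-power. Applying this with $k=e$, the element $g^{eN_0}$ is conjugate to the right-rigid element $y^e$, whose infimum $em:=e\inf(y)$ is a multiple of $e$. Setting $x:=\Delta^{-em}y^e$ we have $\inf(x)=0$ and $y^e=\Delta^{em}x$; since $\Delta^{em}$ is central and $\tau^{em}=\mathrm{id}$, the preferred suffix $\mathfrak p^{\!\Lsh}(x)$ equals $\mathfrak p^{\!\Lsh}(y^e)$, so $x$ is right-rigid. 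Hence it suffices to prove that \emph{some} power of $g$ is conjugate to a right-rigid element.

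Next I would extract from the Morse hypothesis the positivity of the stable canonical length $\ell_s(g):=\lim_{t\to\infty}\ell(g^t)/t$. By Proposition~\ref{P:PreferredPath}(iv), $d_\X(\ast,g^t\gpD)=d_\Gamma(1,\underline{g^t})$, and because $\inf(\underline{g^t})=0$ this is the length $\sup(\underline{g^t})=\ell(g^t)$ of the normal form of $\underline{g^t}$; thus the translation length of $g$ on $\X$ is exactly $\ell_s(g)$. Since a Morse axis is in particular a quasi-isometric embedding, $\ell(g^t)\geq\lambda^{-1}|t|-c$ for suitable constants, whence $\ell_s(g)>0$ and $g$ is not periodic. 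Now conjugation acts on the axis by a left-translation, which is an isometry of $\X$ (Notation~\ref{Not.}), so both the Morse property and $\ell_s$ are conjugacy invariants, and I may replace $g$ by a representative $h$ of its ultra summit set (or set of sliding circuits), for which $\ell(h)$ is minimal in the conjugacy class. In this language, $h$ (or one of its powers) is conjugate to a rigid element precisely when the preferred suffixes along its sliding-circuit orbit vanish, equivalently when the summit canonical length of the relevant power equals its stable value; and by the known rationality, with uniformly bounded denominators, of the translation numbers $\lim_k\inf(g^k)/k$ and $\lim_k\sup(g^k)/k$, I may pass to a power for which those are integers, so that ``summit length equals stable length'' becomes the only remaining obstruction to rigidity.

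The hard part will be to show that the Morse hypothesis removes exactly this obstruction, i.e.\ that for some power the summit length drops to the stable length. The mechanism I would use is a limiting argument on normal-form geodesics. The paths $A(h^{-N}\gpD,h^N\gpD)$ are geodesics in $\X$ (Proposition~\ref{P:PreferredPath}(iv)) joining two points of the axis; by the Morse property they remain within a fixed neighbourhood of the axis, and by the fellow-traveller property (Lemma~\ref{L:AFellowTraveller}) together with the finiteness of $\mathcal D$ they should stabilize, as $N\to\infty$, to a bi-infinite geodesic $\gamma\subset\X$ preserved by $h$ (which acts on it as a translation) and staying uniformly close to the axis. An $h$-periodic geodesic forces the normal form of the powers of a suitable conjugate to grow without any seam cancellation; certifying this with the concatenation lemma (Lemma~\ref{L:ConcatQuasiGeod}) identifies the summit length of that power with its stable value, which is rigidity. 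The delicate points — promoting fellow-traveller closeness to an honest $h$-periodic limit geodesic, and ruling out a sublinearly growing rigidity defect that such a limiting argument alone might not detect — are where the real work lies, and where the full strength of the Morse condition, rather than merely the quasi-isometric embedding, is needed.
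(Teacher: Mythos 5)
Your treatment of the ``moreover'' clause is sound and essentially matches the paper's: rigidity is preserved under taking powers, $\inf$ is additive on powers of a rigid element, so passing to a further power makes the infimum a multiple of $e$, and splitting off the central factor $\Delta^{em}$ leaves a right-rigid $x$ with $\inf(x)=0$. The problem is the first sentence of the proposition. You reduce it to the claim that for some power the summit canonical length drops to the stable value, and then propose a limiting argument on the geodesics $A(h^{-N}\gpD,h^{N}\gpD)$ --- but you yourself flag that promoting fellow-traveller closeness to an honest $h$-periodic limit geodesic, and ruling out a sublinearly growing rigidity defect, ``are where the real work lies.'' That \emph{is} the entire content of the proposition, so as written the proposal contains a genuine gap rather than a proof. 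In particular, your sketch never identifies which consequence of the Morse hypothesis, beyond the quasi-isometric embedding of the axis (which you correctly note is insufficient), actually removes the obstruction.

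The paper closes exactly this gap by a different mechanism: it first proves (Lemma~\ref{L:Centralizer}) that a Morse element has virtually cyclic centralizer, by exhibiting, for any centralizing element $z$ far from $\langle h\rangle$, an explicit $(4A,B)$-quasi-geodesic from $h^{-N}$ to $h^{N}$ passing through $z$, contradicting the Morse property. It then invokes \cite[Theorem 3.23, Corollary 3.24]{Birman-Gebhardt-GM} verbatim, checking that the only properties of pseudo-Anosov braids used there are stability under conjugacy and powers, not being a root of a central element, and precisely this centralizer condition. Your summit-set framing is compatible with that route, but without the centralizer input the conclusion does not follow from what you have written. Your limiting-geodesic idea is instead close in spirit to the alternative ``pumping lemma'' argument that the paper only sketches in a remark (periodicity of the normal form of $g^{n}$ forced by the finite-state-automaton structure of normal forms together with the Morse neighbourhood bound); to make your proposal into a proof you would need to carry that periodicity argument out in full.
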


In order to prepare the proof of Proposition~\ref{P:PowerConjToRigid}, we need the following lemma. It is well-known to experts, but we were not able to find a proof in the literature.

\begin{lemma}\label{L:Centralizer}
Let $H$ be a group generated by a finite set $S$. 
If $h\in H$ is a Morse element in the sense of Definition~\ref{D:Morse1}(b), then $\langle h \rangle $ has finite index in the centralizer $Z(h)$ in $H$.
\end{lemma}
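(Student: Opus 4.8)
The plan is to show that $Z(h)$ lies in a uniformly bounded neighbourhood of $\langle h\rangle$ in $\mathrm{Cay}(H,S)$; once this is established, finite index follows by a ball-counting argument. Write $d$ for the word metric and $|\cdot|$ for word length. Since $h$ is Morse it is in particular a quasi-isometric embedding, so $\langle h\rangle\cong\Z$ and there are constants $\Lambda_0\geq 1$, $K_0\geq0$ with $\tfrac{1}{\Lambda_0}|i-j|-K_0\leq d(h^i,h^j)\leq\Lambda_0|i-j|+K_0$. Suppose we prove that there is an $R$, depending only on $\Lambda_0,K_0$ and $|h|$, such that $d(z,\langle h\rangle)\leq R$ for every $z\in Z(h)$. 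Then for each such $z$ there is an integer $n$ with $|h^{-n}z|\leq R$; as $h^{-n}z=zh^{-n}\in z\langle h\rangle$, every left coset of $\langle h\rangle$ in $Z(h)$ meets the ball $B(1,R)$, which is finite because $\mathrm{Cay}(H,S)$ is locally finite. Hence $[\,Z(h):\langle h\rangle\,]\leq|B(1,R)|<\infty$.

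So fix $z\in Z(h)$ and set $D=d(z,\langle h\rangle)$; I must bound $D$ independently of $z$. Choosing $n$ realising this distance and replacing $z$ by $zh^{-n}\in Z(h)$ — which changes neither $D$ nor membership in $Z(h)$ — I may assume the nearest power of $h$ to $z$ is $h^0=1$, i.e.\ $|z|=D$ and $d(z,h^m)\geq D$ for all $m$. Because $z$ commutes with $h$, left translation by $z$ carries the axis $\gamma(t)=h^t$ to the parallel copy $z\gamma(t)=zh^t=h^tz$, and for all $s,t$
\[
 d(zh^t,h^s)=d(h^{-t}zh^t,h^{s-t})=d(z,h^{s-t})\geq D ,
\]
with equality when $s=t$; thus $z\gamma$ runs at constant distance exactly $D$ from $\gamma$, and the crucial point is that from any vertex $zh^t$ one cannot reach $\gamma$ in fewer than $D$ steps.

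Now comes the key construction. Fix $N=2D$ and let $\sigma$ be a geodesic from $1$ to $z$. Consider the path $\widehat Q_N$ from $\gamma(0)=1$ to $\gamma(N)=h^N$ obtained by concatenating $\sigma$, the axis segment $z,zh,\dots,zh^N$ of $z\gamma$, and a geodesic from $zh^N=h^Nz$ back to $h^N$ (a translate of $\sigma$, of length $D$). This path passes through $z$, which lies at distance exactly $D$ from $\gamma$. I claim $\widehat Q_N$ is a $(\Lambda,K)$-quasi-geodesic with $\Lambda,K$ depending only on $\Lambda_0,K_0,|h|$ and \emph{not} on $D$. The verification is a routine case analysis over subpaths; the one point that makes the constants uniform is the inequality $d(zh^t,h^s)\geq D$ above, which prevents any subpath lying in the middle segment from being much longer than the distance between its endpoints (there is no ``shortcut'' back towards $\gamma$), while the two end-geodesics of length $D$ are negligible once $N\geq 2D$.

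Finally I apply the Morse property. With $M=M_h^{(\Lambda,K)}$ the Morse constant of Definition~\ref{D:Morse1}, the quasi-geodesic $\widehat Q_N$ joining the two axis points $\gamma(0)$ and $\gamma(N)$ stays in the $M$-neighbourhood of $\gamma$; since $z\in\widehat Q_N$ and $d(z,\gamma)=D$, we get $D\leq M$. As $M$ depends only on $\Lambda,K$, hence only on $\Lambda_0,K_0,|h|$, this is the uniform bound $R=M$ sought in the first paragraph, and the lemma follows. The main obstacle is precisely the claim that $\widehat Q_N$ is a quasi-geodesic with constants independent of $D$: this is where the commuting relation is used in an essential way, through the ``no backtracking'' estimate $d(zh^t,h^s)\geq D$, and it is what rules out the naive detour paths (axis $\to z \to$ axis) whose quasi-geodesic constants would degenerate as $D\to\infty$.
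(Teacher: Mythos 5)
Your proof is correct and follows essentially the same route as the paper's: normalise $z$ so that its closest axis point is the identity, build a detour path (geodesic out to $z$, along the commuting translate $z\langle h\rangle$ of the axis, geodesic back to the axis), use the estimate $d(zh^t,h^s)=d(z,h^{s-t})\geq D$ to show this path is a quasi-geodesic with constants independent of $D$, and invoke the Morse property to bound $D$. The ``routine case analysis'' you defer is exactly the part the paper writes out in detail (one endpoint on an end-geodesic and one on the middle segment, versus endpoints on the two different end-geodesics), and it does go through with your setup.
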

\begin{proof}
Denote by $d$ the word-distance associated to $S$ in $H$. 
Let $A\geqslant 1$ and $B\geqslant 0$ such that 
$t\mapsto h^t$ is an $(A,B)$-quasi-isometric embedding of $\mathbb Z$ in $Cay(H,S)$.
Let $M=M_h^{(4A,B)}$. 

We claim that if $zh=hz$, then $d(z,\langle h\rangle )\leqslant M$. Granted this claim, we then see that for all $z\in Z(h)$, there is some $m\in \mathbb Z$ so that $d(z,h^m)=d(1,z^{-1}h^m)\leqslant M$, that is the coset $z^{-1}\langle h\rangle$ has a representative of word length at most~$M$. It follows that each coset of $Z(h)$ modulo~$\langle h \rangle$ has a representative of word length at most $M$, so there are only finitely many cosets. 

To prove the claim, assume on the contrary that $d(z,\langle h\rangle)=K>M$, for some $z\in Z(h)$. Let $r$ such that $d(z, h^r)=K$; up to replacing $z$ by $h^{-r}z$, we may assume that $K=d(z,\langle h\rangle )=d(z,1)$. 
Choose $N$ big enough so that $d=d(h^{-N},h^N)>4K$.
Then we piecewise define a map $\gamma\co \{0,\ldots,2K+2N\} \to Cay(H,S)$ as follows. 
\begin{itemize}
    \item For $0\leqslant t \leqslant K$, $\gamma(t)=\gamma_1(t)$ follows a geodesic from $h^{-N}$ to $zh^{-N}$.
    \item For $K\leqslant t\leqslant K+2N$, $\gamma(t)=\gamma_2(t)=zh^{t-K-N}$.
    \item For $K+2N\leqslant t \leqslant 2K+2N$, $\gamma(t)=\gamma_3(t)$ follows a geodesic from 
    $zh^{N}$ to~$h^N$. 
\end{itemize}

To conclude, we shall observe that $\gamma$ defines a $(4A,B)$-quasi-geodesic with endpoints $h^{-N}$ and $h^N$ on the axis of $h$, hence contradicting the hypothesis that $h$ is Morse (as the vertex $z=\gamma(K+N)$ does not lie in the $M$-neighborhood of $\langle h^n, n\in \mathbb Z\rangle$).

Let $0\leqslant s,t\leqslant 2K+2N$. The upper bound 
$d(\gamma(s),\gamma(t))\leqslant A|s-t|+B$ follows immediately from the fact that $\gamma$ is a juxtaposition of geodesics and an $(A,B)$-quasi-geodesic. To obtain the lower bound, we consider two cases. 

First, let $0\leqslant s\leqslant K$ and $K\leqslant t\leqslant K+2N$ (the 
situation is symmetric for $K+2N\leqslant s\leqslant 2K+2N$).
Because $zh^{-N}=\gamma(K)$ realizes the minimum possible distance between $h^{-N}$ and $\{zh^{n}, n\in \Z\}$, we have 
$d(\gamma(s),\gamma(t))\geqslant d(\gamma(s),\gamma(K))$. 
Then we have 
$$d(\gamma(s),\gamma(K))+d(\gamma(K),\gamma(t))\leqslant 2d(\gamma(s),\gamma(K)) + d(\gamma(s),\gamma(t))\leqslant 3d(\gamma(s),\gamma(t))$$
and the lower bound $d(\gamma(s),\gamma(t))\geqslant \frac{1}{3A}|s-t|-\frac{B}{3}$ follows.

Second, let $0\leqslant s\leqslant K$ and $K+2N\leqslant t\leqslant 2K+2N$. By our choice of $N$, we have
$$ d(\gamma(s),\gamma(t))\geqslant 
\frac{d}{2}\geqslant
\frac{d}{4}+K\geqslant 
\frac{d}{4}+\frac{K}{2}\geqslant \frac{d}{4}+\frac{1}{4}d(\gamma(s),zh^{-N})+
\frac{1}{4}d(\gamma(t),zh^N)$$
and we obtain the lower bound $d(\gamma(s),\gamma(t))\geqslant \frac{1}{4A}|t-s|-\frac{B}{4}$.
\end{proof}

\begin{proof}[Proof of Proposition~\ref{P:PowerConjToRigid}] 
The proof of the first sentence is identical to the proof of \cite[Theorem 3.23, Corollary 3.24]{Birman-Gebhardt-GM}, replacing the words ``pseudo-Anosov braid'' by 
``Morse element of~$G$". Indeed, being Morse is a property stable by conjugacy and powers, a Morse element is never a root of a central element, 
and every element of $G$ commuting with a Morse element $g$ has a common power with $g$, up to multiplication by a central power of $\Delta$ (Lemma~\ref{L:Centralizer}). 
These are the only properties of pseudo-Anosov braids that are used in the proofs of the quoted results. 

For the proof of the second sentence, we simply remark that for a \emph{rigid} element~$g$ and an integer~$k$,
$$ \inf(g^k)=k\cdot \inf(g) $$
Thus by taking a further power, we can achieve that the infimum of the rigid conjugate is a multiple of~$e$.
\end{proof}

\begin{remark} There is an alternative proof of Proposition~\ref{P:PowerConjToRigid},
which does not use Lemma~\ref{L:Centralizer} or the paper~\cite{Birman-Gebhardt-GM}, but which is rather reminiscent of the ``pumping lemma'' \cite[Theorem 1.2.13]{CEHLPT}.
The idea is that the right normal form of~$g^n$ has to stay close to the axis of the Morse element~$g$. Now right normal forms belong to a language recognized by a finite state automaton; thus for large enough~$n$ this right normal form has a middle segment with periodic behaviour. This periodic segment is a right-rigid conjugate of a power of~$g$. We leave the details as an exercise.
\end{remark}

\begin{notation}
From here on, all diagrams in this paper will take place in~$\X$ (not in the Cayley graph~$\Gamma(G)$ or in $\overline \Gamma=\Gamma(G/Z(G))$). Also, in the diagrams, we simplify the notation, labelling a vertex~$g$ if it is represented by a group element~$g$ -- strictly speaking, it should be labelled~$g\gpD$.
\end{notation}

\begin{prop}[Preferred paths stay close to the axis]\label{P:Matthieu'sLemma}
Suppose $x$ is a Morse element of~$G$ satisfying $\inf(x^k)=0$ for every positive integer~$k$. For $i\in \mathbb N$, denote $\ell_i=\ell(x^i)$.
Let $h\in G$  
and suppose that there exists an integer $i\geqslant 0$ such that $x^i\preccurlyeq \underline h$. 
Consider the initial segment of the path $A(1,h)$ in~$\X$ of length~$\ell_i$ (the same length as $x^i$). 
Then this segment stays within distance $M_x$ from the axis of~$x$: 
$$ \text{if \ }0\leqslant k\leqslant \ell_i \text{ \ then \ } d_{\X}((\underline h\wedge \Delta^{k})\gpD, \mathrm{axis}(x)) \leqslant M_x$$
Moreover, 
$$d_{\X}((\underline h\wedge \Delta^{\ell_i})\gpD, \  x^i\gpD)\leqslant 2M_x$$
\end{prop}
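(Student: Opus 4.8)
The plan is to first record the combinatorial facts that make $V:=(\underline h\wedge\Delta^{\ell_i})\gpD$ directly comparable to $x^i\gpD$, then exhibit a $(2,0)$-quasi-geodesic between two \emph{axis} vertices whose turning point is $V$, and finally use the Morse property to pin $V$ to $x^i\gpD$. First I would set $V'=\underline h\wedge\Delta^{\ell_i}$ and verify the elementary inequalities. Since $\inf(x^i)=0$ we have $\sup(x^i)=\ell_i$, so $x^i\preccurlyeq\Delta^{\ell_i}$; together with the standing hypothesis $x^i\preccurlyeq\underline h$ this gives $x^i\preccurlyeq V'$. As $V'\preccurlyeq\underline h$ and $\inf(\underline h)=0$ we get $\inf(V')=0$, while $x^i\preccurlyeq V'\preccurlyeq\Delta^{\ell_i}$ forces $\sup(V')=\ell_i$. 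By Proposition~\ref{P:PreferredPath}(iv) these yield $d_{\X}(\ast,V)=\ell_i=d_{\X}(\ast,x^i\gpD)$, and in particular the preferred paths $A(\ast,x^i)$ and $A(\ast,V)$ have the same length~$\ell_i$.

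Next I would feed the prefix chain $1\preccurlyeq x^i\preccurlyeq V'$ into Lemma~\ref{L:ConcatQuasiGeod}(ii): both of its hypotheses (the prefix chain, and the equality of the lengths of $A(\ast,x^i)$ and $A(\ast,V)$ just checked) are met, so the concatenation $\gamma:=A(x^i\gpD,V)\cdot A(V,\ast)$ is a $(2,0)$-quasi-geodesic joining the two axis vertices $x^i\gpD$ and $\ast=x^0\gpD$, with $V$ as its turning vertex. Since $x$ is Morse with constant $M_x=M_x^{(2,0)}$, all of $\gamma$—in particular $V$—lies in the $M_x$-neighbourhood of $\mathrm{axis}(x)$. (This also re-proves the $k=\ell_i$ case of the first displayed inequality.) Fix an axis vertex $x^{t_0}\gpD$ with $d_{\X}(V,x^{t_0}\gpD)\le M_x$.

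It then remains to identify $t_0$ with $i$ up to the allowed error. Writing $\ell_t=d_{\X}(\ast,x^t\gpD)=\sup(x^t)$, the triangle inequality applied to $d_{\X}(\ast,V)=\ell_i$ and $d_{\X}(V,x^{t_0}\gpD)\le M_x$ gives $|\ell_{t_0}-\ell_i|\le M_x$. In the setting in which this proposition is applied the element $x$ is right-rigid (Definition~\ref{D:Rigid}, the output of Proposition~\ref{P:PowerConjToRigid}), so canonical length is additive along powers, $\ell_t=t\,\ell_1$, and hence $d_{\X}(x^i\gpD,x^{t_0}\gpD)=|\ell_i-\ell_{t_0}|\le M_x$. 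Combining, $d_{\X}(V,x^i\gpD)\le d_{\X}(V,x^{t_0}\gpD)+d_{\X}(x^{t_0}\gpD,x^i\gpD)\le 2M_x$, which is the claim.

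The main obstacle is exactly this last step. The Morse property only places $V$ in a bounded neighbourhood of the axis; upgrading ``near the axis'' to ``near the \emph{specific} vertex $x^i\gpD$'' requires controlling which power of~$x$ is closest to~$V$. This is where one genuinely needs that $d_{\X}(\ast,x^t\gpD)$ grows additively in~$t$ (equivalently, that the vertices $x^t\gpD$ sit at regular spacing along $A(\ast,x^t)$): without this straightness the closest-point projection of~$V$ could a priori lag behind or overshoot $x^i\gpD$, and it is precisely the linear growth of $\ell_t$ that collapses $|\ell_i-\ell_{t_0}|\le M_x$ into the vertex bound $d_{\X}(x^i\gpD,x^{t_0}\gpD)\le M_x$ and thus yields the clean constant $2M_x$. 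I would therefore make sure, before invoking additivity, that the hypotheses do provide the needed rigidity of~$x$ (or else extract the additive comparison $d_{\X}(x^i\gpD,x^{t_0}\gpD)\le|\ell_i-\ell_{t_0}|$ directly from the structure of the powers $x^t$).
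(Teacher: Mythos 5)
Your argument is correct and is essentially the paper's own proof: the same vertex $\underline h\wedge\Delta^{\ell_i}$, the same appeal to Lemma~\ref{L:ConcatQuasiGeod}(ii) applied to the chain $1\preccurlyeq x^i\preccurlyeq \underline h\wedge\Delta^{\ell_i}$, the Morse property for the resulting $(2,0)$-quasi-geodesic between the axis points $\ast$ and $x^i\gpD$ (which contains the whole initial segment, so it gives the first displayed inequality for every $k$, not just $k=\ell_i$), and a comparison of distances from $\ast$ for the final estimate. The issue you flag in the last step is genuine but is equally present, and left implicit, in the paper's proof --- the deduction that the axis point $x^k\gpD$ near $\underline h\wedge\Delta^{\ell_i}$ must lie within $M_x$ of $x^i\gpD$ uses exactly that $d_\X(x^k,x^i)=|d_\X(\ast,x^k)-d_\X(\ast,x^i)|$, i.e.\ additivity of canonical length along powers of~$x$, which does hold in the only situation where the proposition is invoked, namely for the right-rigid element produced by Proposition~\ref{P:PowerConjToRigid} and used in Proposition~\ref{P:GeodCloseToProj}.
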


\begin{figure}[htb]
\begin{center}
\includegraphics{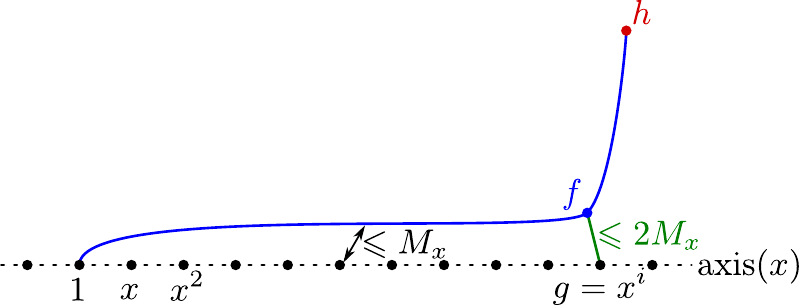}
\end{center}
\caption{The statement of Proposition~\ref{P:Matthieu'sLemma}. Here the segment from $\ast$ to~$f\gpD$, with $f=\underline h\wedge \Delta^{\ell_i}$, has the same length as the segment from $\ast$ to~$g\gpD$, namely $\ell_i$.}
\end{figure}

\begin{proof}
Let $f = \underline h \wedge \Delta^{\ell_i}$ and $g=x^i$. 
Of course, $\underline f =f$, and our first hypothesis on~$x$ says that $\underline g =g$. Because $x^i \preccurlyeq \underline h$, we have $1\preccurlyeq \underline g \preccurlyeq \underline f$ and by construction of $f$, the paths $A(1,f)$ and $A(1,g)$ have the same length.
By Lemma~\ref{L:ConcatQuasiGeod}(ii), the concatenation of paths $A(1,f)$, followed by $A(f,g)$ is a $(2,0)$-quasi-geodesic. This yields the first statement, recalling that $M_x=M_x^{(2,0)}$ is the Morse constant. 

For the second statement, we notice that in particular, the vertex $f\gpD$ is at distance at most~$M_x$ from 
some point $x^k\gpD$ on the axis. By the triangle inequality, $$\ell_i - M_x \leqslant d_\X(\ast,x^k\gpD) \leqslant \ell_i + M_x,$$ and therefore $x^k\gpD$ lies in an $M_x$-neighbourhood of~$g\gpD$. We conclude that 
\[d_\X (f\gpD,g\gpD) \leqslant d_\X (f\gpD,x^k\gpD) + d_\X (x^k\gpD,g\gpD) \leqslant 2M_x.\qedhere\]
\end{proof}


\section{Projection to the axis}\label{S:Projection}
In this section, we will define, in Garside-theoretical terms, a projection from~$\X$ to the axis of any element $x$ of $G$, provided that $\inf(x)=0$ and that $x$ satisfies the additional hypothesis of being \emph{right-rigid} (Definition~\ref{D:Rigid}).
If, moreover, $x$ is Morse, then this projection satisfies a contraction property (Proposition~\ref{P:GeodCloseToProj}) which extends Proposition~\ref{P:Matthieu'sLemma}.  This will be sufficient for deducing that our projection coincides, up to a bounded error, with any closest point projection (Corollary~\ref{C:CloseToClosest}). 

\begin{remark}\label{R:RightRigidity}
We remark that the occurrence of the condition of right-rigidity in our context is quite surprising, as this is a condition on the \emph{right} normal form of~$x$, whereas otherwise, we are generally using \emph{left} normal forms throughout this paper.
\end{remark}

\begin{lemma}\label{L:Projection0}
Let $x\in G$ be such that $\inf(x^k)=0$ for all $k\in \mathbb N$. Let $h\in G$. Then there exists $k_0\in \mathbb N$ such that 
for $k\geqslant k_0$, $x^{k-k_0}\preccurlyeq \underline{x^{k}h}$.
\end{lemma}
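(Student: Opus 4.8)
The plan is to reduce the prefix relation to a purely numerical statement about infima and then exploit a stabilization phenomenon. Since $\inf(x^{k})=0$, each $x^{k}$ lies in $G^{+}$ and equals its own distinguished representative, while $\underline{x^{k}h}=x^{k}h\,\Delta^{-\inf(x^{k}h)}$. Using $a\preccurlyeq b\iff a^{-1}b\in G^{+}$ and the identity $(x^{k-k_{0}})^{-1}x^{k}=x^{k_{0}}$ (note $k\geqslant k_{0}$ keeps $x^{k-k_{0}}\in G^{+}$), the desired relation $x^{k-k_{0}}\preccurlyeq\underline{x^{k}h}$ becomes
\[
x^{k_{0}}h\,\Delta^{-\inf(x^{k}h)}\in G^{+}.
\]
Since a group element is positive exactly when its infimum is non-negative, and $\inf(g\Delta^{m})=\inf(g)+m$, this is equivalent to the inequality $\inf(x^{k_{0}}h)\geqslant\inf(x^{k}h)$.

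So everything hinges on the integer sequence $p_{k}:=\inf(x^{k}h)$. I would first show it is non-decreasing: the standard subadditivity $\inf(ab)\geqslant\inf(a)+\inf(b)$ applied to $x^{k+1}h=x\cdot x^{k}h$, together with $\inf(x)=0$, gives $p_{k+1}\geqslant p_{k}$. Next I would show it is bounded above independently of $k$: the companion inequality $\inf(ab)\leqslant\inf(a)+\sup(b)$ (which follows from subadditivity applied to the factorization $a=(ab)\,b^{-1}$, using $\inf(b^{-1})=-\sup(b)$) gives $p_{k}\leqslant\inf(x^{k})+\sup(h)=\sup(h)$.

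A monotone non-decreasing sequence of integers that is bounded above is eventually constant, so there is an index $k_{1}$ with $p_{k}=p_{k_{1}}=:I$ for all $k\geqslant k_{1}$. I would then simply set $k_{0}=k_{1}$. For every $k\geqslant k_{0}$ we have $p_{k}=I=p_{k_{0}}$, so the required inequality $\inf(x^{k_{0}}h)\geqslant p_{k}$ holds with equality; concretely $\inf\bigl(x^{k_{0}}h\,\Delta^{-I}\bigr)=I-I=0$, whence $x^{k_{0}}h\,\Delta^{-I}\in G^{+}$ and therefore $x^{k-k_{0}}\preccurlyeq\underline{x^{k}h}$, as claimed.

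The only real content is the boundedness of $(p_{k})$: monotonicity alone would leave open the possibility that $\inf(x^{k}h)$ drifts upward indefinitely, in which case no single $k_{0}$ could work for all large $k$. It is precisely the fixed ``size'' $\sup(h)$ of the perturbation $h$ that caps the sequence and forces stabilization. Beyond that, the argument is routine Garside bookkeeping; the points to handle carefully are the sign conventions in $\inf(g\Delta^{m})=\inf(g)+m$ and $\inf(g^{-1})=-\sup(g)$, and the equivalence $g\in G^{+}\iff\inf(g)\geqslant 0$, all of which are standard.
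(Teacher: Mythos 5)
Your proof is correct and follows essentially the same route as the paper's: both reduce the statement to the stabilization of the integer sequence $\inf(x^k h)$, which is non-decreasing by subadditivity of $\inf$ (using $\inf(x)=0$) and bounded above by $\sup(h)$, hence eventually constant. The only cosmetic difference is that you phrase the conclusion via the equivalence $g\in G^{+}\iff\inf(g)\geqslant 0$, whereas the paper writes out the factorization $\underline{x^{k}h}=x^{k-k_0}\,\underline{x^{k_0}h}$ explicitly; both are valid.
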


\begin{proof}

Consider the sequence $(\inf(x^k h))_{k\in \mathbb N}$; it is non-decreasing, as by \cite[Section~1]{ElRifaiMorton}, we have 
$\inf(x^{k+1}h)\geqslant\inf(x)+ \inf(x^kh)$. The same argument shows that the sequence is bounded above by $\sup(h)$, as
$$0 = \inf(x^k)\geqslant \inf(x^kh)+\inf(h^{-1}) = \inf(x^kh)-\sup(h).$$

We deduce that the sequence is eventually constant: there exist $k_0\in \mathbb N$ and $n_0\in \mathbb Z$, $n_0\leqslant \sup(h)$, such that for all $k\geqslant k_0$, 
$\inf(x^kh)=n_0$. 
Now, for $k\geqslant k_0$, $$\underline{ x^{k} h} = x^k h \Delta^{-n_0}= x^{k-k_0} x^{k_0}h\Delta^{-n_0}= x^{k-k_0} \underline{x^{k_0}h}$$
which finishes the proof. 
\end{proof}

It follows in particular that if $\inf(x^k)=0$ for all $k\in \N$, then for any $h\in G$, the set $\{k\in \mathbb Z, x\not\preccurlyeq \underline{x^kh}\}$ is bounded above.

\begin{definition}[Garside-theoretical projection to the axis of~$x$] 
Let $x\in G$ be such that $\inf(x^k)=0$ for all $k\in \N$.

(a) Let $h\in G$. We define the integer 
$$\lambda(h)= -\max\{k\in \Z, x\not\preccurlyeq \underline {x^kh}\}$$ and we note that for any $t\in\mathbb Z$, $\lambda(h\Delta^t)=\lambda(h)$. 

(b) We define a map $\pi\co G\to G$ as follows: for any $h\in G$, we set $\pi(h)=x^{\lambda(h)}$.
For any $t\in\mathbb Z$, we have $\pi(h\Delta^t)=\pi(h)$ so we can define the \emph{Garside-theoretical projection to the axis of $x$} to be the map (which we denote with the same letter) 
$$\pi\co \X\to \X, \ h\gpD \mapsto x^{\lambda(h)}\gpD.$$
\end{definition}

From now on, for the rest of the paper, we make the stronger hypothesis that $x$ satisfies $\inf(x)=0$ and is \emph{right-rigid} (see Definition~\ref{D:Rigid}). This implies in particular that $\inf(x^k)=0$ for all $k\in\mathbb N$.

\begin{lemma}\label{L:ProjectionAndPrefixes1}
Let $x\in G$ with $\inf(x) = 0$ be a \emph{right-rigid} element. Let $h\in G$. The following are equivalent:
\begin{itemize}
    \item[(i)] $x\preccurlyeq \underline h$,
    \item[(ii)] for all $k\geqslant 0$, $x^{k+1}\preccurlyeq \underline{x^kh}$.
\end{itemize}
\end{lemma}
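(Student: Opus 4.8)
The plan is to prove the two implications separately. The implication (ii)$\Rightarrow$(i) is immediate: specialising $k=0$ gives $x=x^{0+1}\preccurlyeq\underline{x^0h}=\underline h$.

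For (i)$\Rightarrow$(ii) I would first reduce everything to an assertion about infima. Assume $x\preccurlyeq\underline h$ and write $\underline h=xw$ with $w\in G^{+}$. Since $\inf(\underline h)=0$ and $\inf(xw)\geqslant\inf(x)+\inf(w)=\inf(w)\geqslant 0$, one gets $\inf(w)=0$ and $\inf(xw)=0$. Now $x^k\underline h=x^{k+1}w$ and $x^kh=x^k\underline h\,\Delta^{\inf(h)}$, so that $\inf(x^kh)=\inf(x^{k+1}w)+\inf(h)$. Hence if we knew that $\inf(x^{k+1}w)=0$ for every $k\geqslant 0$, then $\inf(x^kh)=\inf(h)$, whence $\underline{x^kh}=x^k\underline h=x^{k+1}w$, and the desired relation $x^{k+1}\preccurlyeq\underline{x^kh}$ becomes $x^{k+1}\preccurlyeq x^{k+1}w$, i.e.\ $w\in G^{+}$, which holds. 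Thus, granting $x\preccurlyeq\underline h$, the implication (i)$\Rightarrow$(ii) follows once we establish the \textbf{Main Claim}: $\inf(x^{j}w)=0$ for all $j\geqslant 0$, where $w\in G^{+}$ satisfies $\inf(w)=\inf(xw)=0$.

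Next I would reduce the Main Claim to a single-step statement by a telescoping induction. The cases $j=0$ and $j=1$ are exactly the hypotheses on $w$. For the inductive step it suffices to prove
\[(\ast)\qquad \inf(v)=0\ \text{and}\ \inf(xv)=0\ \Longrightarrow\ \inf(x^2v)=0\qquad(v\in G^{+}),\]
since applying $(\ast)$ to $v=x^{j-1}w$ (whose hypotheses $\inf(v)=\inf(x^{j-1}w)=0$ and $\inf(xv)=\inf(x^{j}w)=0$ are the two previous inductive steps) yields $\inf(x^{j+1}w)=\inf(x^2v)=0$. So the whole lemma comes down to $(\ast)$. A clean reformulation of $(\ast)$ uses that for positive $P$ one has $\Delta\preccurlyeq xP\Leftrightarrow y\preccurlyeq P$, where $y:=x^{-1}(x\vee\Delta)\in G^{+}$: indeed $x,\Delta\preccurlyeq xP$ force $x\vee\Delta=xy\preccurlyeq xP$, and left-cancellation gives $y\preccurlyeq P$, the converse being clear. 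Taking $P=xv$ and $P=v$ turns $(\ast)$ into the implication $y\preccurlyeq xv\Rightarrow y\preccurlyeq v$. From $y\preccurlyeq xv$ and $x\preccurlyeq xv$ we get $x\vee y\preccurlyeq xv$, hence $x^{-1}(x\vee y)\preccurlyeq v$, so it would be enough to know $y\preccurlyeq x^{-1}(x\vee y)$.

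\textbf{Main obstacle.} This last inequality is \emph{false} for general coprime elements: with $a=\sigma_1\sigma_2$, $b=\sigma_2\sigma_1$ in $B_3$ one has $a\wedge b=1$, $a\vee b=\Delta$, $a^{-1}(a\vee b)=\sigma_1$, yet $b\not\preccurlyeq\sigma_1$. This is exactly why the hypothesis cannot be dropped, and where right-rigidity must enter. I would close the argument by using that $x$ is right-rigid, so that the right normal form of $x^2$ is the concatenation of two copies of that of $x$; running the normal-form multiplication of $x^2$ by $v$, the creation of $\Delta$ at the junction with $v$ is governed by a single period $x_r\cdots x_1$, which is the \emph{same} local interaction as for $xv$, and since $\inf(xv)=0$ produces no $\Delta$, neither does $x^2v$. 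The delicate point is to control how the leftward renormalisation of $x^2v$ propagates past one period into the next, periodic, copy of $x$; right-rigidity is precisely the condition guaranteeing that this continuation produces no new $\Delta$. This is the tension anticipated in Remark~\ref{R:RightRigidity}: although $\preccurlyeq$ and the distinguished representatives are defined through \emph{left} divisibility, the decisive input is a condition on the \emph{right} normal form of $x$.
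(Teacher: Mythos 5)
Your reduction is correct and in fact mirrors the paper's own argument: the paper also writes $\underline h = xa$ with $a$ positive and observes that everything comes down to the statement that, for a right-rigid $x$ with $\inf(x)=0$, the condition $\inf(xa)=0$ forces $\inf(x^{k+1}a)=0$ for all $k$. Your ``Main Claim'' is exactly this statement, and your single-step version $(\ast)$ is its inductive core; your handling of (ii)$\Rightarrow$(i), the bookkeeping with $\Delta^{\inf(h)}$, and the reformulation $\Delta\preccurlyeq xP\Leftrightarrow x^{-1}(x\vee\Delta)\preccurlyeq P$ are all fine, and your $B_3$ example correctly shows that some hypothesis on $x$ is indispensable.

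However, there is a genuine gap: $(\ast)$ is never actually proved. The paper disposes of it by citing \cite[Lemma~1]{CalvezWiestAH}; you, instead, end with a plan (``I would close the argument by\dots''). The assertion that ``the creation of $\Delta$ at the junction with $v$ is governed by a single period, which is the same local interaction as for $xv$'' is precisely the point at issue, not a proof of it: when one computes the normal form of $x\cdot(xv)$, the rewriting of $xv$ into normal form has already modified the trailing factors of the inner copy of $x$, so the second junction is not literally the same local picture, and one must show that right-rigidity prevents the leftward propagation from crossing the period boundary and assembling a $\Delta$. Saying that ``right-rigidity is precisely the condition guaranteeing'' this is an unsubstantiated (essentially circular) claim as written. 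A workable way to close the gap with the tools already in the paper is to imitate the proof of Lemma~\ref{L:ProjectionLipschitz}: assume $\Delta\preccurlyeq x^2v$ with $\Delta\not\preccurlyeq xv$, apply Lemma~\ref{L:CharneySplittings} to the fraction decomposition of $x^{-2}\Delta$ (or of $x^{-1}\Delta$), and use that any suffix of $x^2$ of canonical length one is a suffix of the last right normal form factor $x_1$ because the right normal form of $x^2$ is the concatenation of two copies of that of $x$. Until some such argument is supplied, the implication (i)$\Rightarrow$(ii) is not established.
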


\begin{proof}
Only (i) $\Longrightarrow$ (ii) needs a proof. 
Suppose that $x\preccurlyeq \underline h$, and let $k>0$. 
We claim that $\underline{x^kh}=x^k\underline{h}$ (i.e. that $\inf(x^k\underline{h})=0$). In order to prove this claim, we write $\underline{h}=xa$ with $a$ positive. 
Since $x$ is right-rigid, the condition $\inf(xa)=0$ implies $\inf(x^{k+1} a)=0$ (see \cite[Lemma~1]{CalvezWiestAH}). 
This means that $\inf(x^k \underline{h})=0$, proving the claim. 
Now (ii) is an immediate consequence.
\end{proof}

When $x$ is right-rigid, Lemma~\ref{L:ProjectionAndPrefixes1} yields a clean interpretation of the Garside-theoretical projection to the axis of~$x$ (see Figure~\ref{F:DefProjection}):

\begin{lemma}\label{L:ProjectionAndPrefixes2}
Let $x\in G$ with $\inf(x)=0$ be a right-rigid element. Let $h\in G$.
For $m\in \mathbb Z$, 
the following conditions are equivalent: 
\begin{itemize}
\item[(i)] $m=-\lambda(h)$,
    \item[(ii)]
    {\rm{(a)}}\   $x\not\preccurlyeq \underline{x^m h}$ and
    {\rm{(b)}}\  $x\preccurlyeq \underline{x^{m+1}h}$,
    \item[(iii)] 
  for every $k\geqslant 0$, 
      {\rm{(a)}}\
   $x^{k+1}\not\preccurlyeq \underline{x^{m+k}h}$ and
   {\rm{(b)}}\  $x^k \preccurlyeq \underline{x^{m+k}h}$.
\end{itemize}
In particular, whenever $\lambda(h)>0$, we have $x^{\lambda(h)}\preccurlyeq \underline h$. Also, for every $k\in \mathbb Z$, $\lambda(x^k)=k$. 
\end{lemma}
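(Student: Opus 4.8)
The plan is to express everything in terms of the integer sequence $f_k:=\inf(x^k h)$, $k\in\Z$, and to show that right-rigidity forces this sequence to increase strictly up to a threshold and to be constant afterwards. The engine is the following \emph{arithmetic dictionary}: for every $n\in\Z$ and every $p\geq 0$,
\[
x^{p}\preccurlyeq \underline{x^{n}h}\quad\Longleftrightarrow\quad f_{n-p}=f_{n}.
\]
This is a routine unwinding of the definitions. Since $\underline{x^{n}h}=x^{n}h\,\Delta^{-f_{n}}$, the condition $x^{p}\preccurlyeq \underline{x^{n}h}$ is equivalent to $x^{n-p}h\,\Delta^{-f_{n}}\in G^{+}$, i.e.\ to $\inf(x^{n-p}h)=f_{n-p}\geq f_{n}$; and because $\inf(x)=0$ gives $f_{n-p}\leq f_{n}$ via the inequality $\inf(ab)\geq\inf(a)+\inf(b)$ (\cite[Section 1]{ElRifaiMorton}), this forces $f_{n-p}=f_{n}$. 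In particular $x\not\preccurlyeq\underline{x^{k}h}$ is equivalent to $f_{k-1}<f_{k}$, so that $-\lambda(h)=\max\{k:\ f_{k-1}<f_{k}\}$.

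Next I would record the structural facts about $f$. It is non-decreasing (again from $\inf(x)=0$) and bounded above (Lemma~\ref{L:Projection0}), and right-rigidity supplies the key \emph{stabilization} property: if $f_{n}=f_{n+1}$ then $f$ is constant on $[n,\infty)$. This is exactly Lemma~\ref{L:ProjectionAndPrefixes1} read through the dictionary: the equality $f_{n}=f_{n+1}$ means $x\preccurlyeq\underline{x^{n+1}h}$, and applying Lemma~\ref{L:ProjectionAndPrefixes1} to $x^{n+1}h$ yields $x^{j+1}\preccurlyeq\underline{x^{n+1+j}h}$ for all $j\geq 0$, which translates back into $f_{n}=f_{n+1+j}$ for all $j\geq 0$. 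Since $\lambda(h)$ is defined, the set $\{k:\ f_{k-1}<f_{k}\}$ is nonempty and bounded above; combined with stabilization this shows that $f$ is strictly increasing on $(-\infty,a]$ and constant on $[a,\infty)$, where $a:=-\lambda(h)$ is the last index of strict increase.

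With this picture the equivalences become bookkeeping. Condition~(ii) reads $f_{m-1}<f_{m}$ and $f_{m}=f_{m+1}$; the first inequality holds exactly when $m\leq a$ and the second exactly when $m\geq a$, whence (ii)$\iff m=a\iff$(i). Condition~(iii) reads: for all $k\geq 0$, $f_{m-1}<f_{m+k}$ and $f_{m}=f_{m+k}$. Assuming (ii), so that $m=a$ and $f\equiv f_{m}$ on $[m,\infty)$, both families hold because $f_{m+k}=f_{m}>f_{m-1}$; conversely (iii) contains (ii) as the instances $k=0$ of (a) and $k=1$ of (b). For the two ``in particular'' assertions: when $\lambda(h)>0$, the case $k=\lambda(h)$ of (iii)(b) gives $x^{\lambda(h)}\preccurlyeq\underline{x^{0}h}=\underline h$; and for $h=x^{k}$ one checks directly that $m=-k$ satisfies (iii) — here $x^{j}\preccurlyeq\underline{x^{j}}=x^{j}$ trivially, while $x^{j+1}\not\preccurlyeq x^{j}$ because $x\neq 1$ — so by (iii)$\Rightarrow$(i) one gets $-\lambda(x^{k})=-k$, that is $\lambda(x^{k})=k$.

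I expect the main obstacle to be the upper bound hidden in (iii)(a), namely ruling out that $\underline{x^{m+k}h}$ is divisible by one power of $x$ more than expected. The dictionary tames it: the assumed divisibility $x^{k+1}\preccurlyeq\underline{x^{m+k}h}$ would give $f_{m-1}=f_{m+k}$, which together with the monotonicity $f_{m-1}\leq f_{m}\leq\cdots\leq f_{m+k}$ forces $f_{m-1}=f_{m}$, i.e.\ $x\preccurlyeq\underline{x^{m}h}$, contradicting (ii)(a) (this direction does not even use right-rigidity). The \emph{only} genuine use of the right-rigidity hypothesis is through the stabilization property, which upgrades the single equality $f_{m}=f_{m+1}$ of (ii)(b) to constancy of $f$ on a half-line, and this is precisely what separates (iii)(b) from its one-step version.
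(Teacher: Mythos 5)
Your proof is correct and follows essentially the same route as the paper: your ``dictionary'' $x^{p}\preccurlyeq \underline{x^{n}h}\Leftrightarrow \inf(x^{n-p}h)=\inf(x^{n}h)$ is exactly the divisibility manipulation the paper performs implicitly (its step $\underline{x^{m+k}h}=x^{k+1}a\Rightarrow \underline{x^{m}h}=xa$ rests on the same monotonicity of $\inf$), and in both arguments the right-rigidity hypothesis enters only through Lemma~\ref{L:ProjectionAndPrefixes1}. Recasting everything as the ``strictly increasing, then constant'' profile of the sequence $k\mapsto\inf(x^{k}h)$ is a tidy repackaging of the same bookkeeping rather than a different method.
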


\begin{figure}[htb]
\begin{center}
\includegraphics{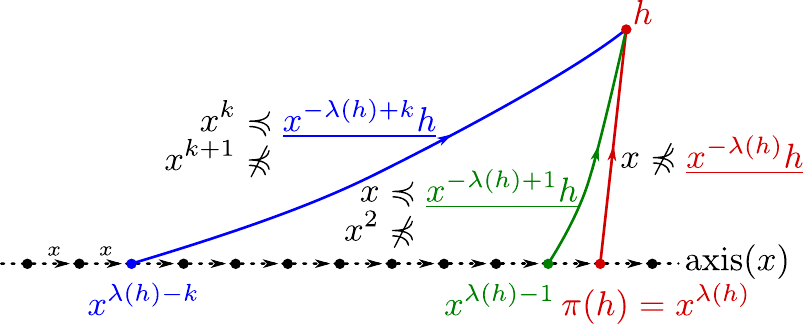}
\end{center}
\caption{The definition of the projection~$\pi$ from~$\X$ to the axis, for $h \in G$, and its properties (supposing that $x$ is right-rigid).}
\label{F:DefProjection}
\end{figure}

\begin{proof}
(i) $\Longrightarrow$ (ii) and (iii) $\Longrightarrow$ (i) are clear by definition of $\lambda$.  Assume (ii); this yields immediately statement (iii) for $k=0$. Suppose $k>0$. For (iii)(a), suppose on the contrary that $x^{k+1}\preccurlyeq \underline{x^{m+k}h}$. Then $\underline{x^{m+k} h}=x^{k+1}a$ for some positive~$a$ 
and $\underline{x^mh}=xa$, contradicting (ii)(a). For (iii)(b), we use the hypothesis (ii)(b) that $x\preccurlyeq \underline{x^{m+1}h}$; the conclusion then follows immediately from  Lemma~\ref{L:ProjectionAndPrefixes1}.
\end{proof}

\begin{lemma}\label{L:ProjectionLipschitz}
Let $x\in G$ with $\inf(x)=0$ be a right-rigid element.
Let $z\in G$ with $\inf(z)=0$, and let $s$ be a simple element. Suppose that $x\not\preccurlyeq z$. Then $x^2\not\preccurlyeq zs$. 
\end{lemma}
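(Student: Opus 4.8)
The plan is to prove the contrapositive: assuming $x^2\preccurlyeq zs$, I will deduce $x\preccurlyeq z$. Put $a:=x^{-1}z$; since $\inf(z)=0$, the element $z=xa$ is positive with $\inf(xa)=0$, and the desired conclusion $x\preccurlyeq z$ is exactly the assertion that $a\in G^{+}$. Writing $zs=x^2t$ with $t\in G^{+}$ (possible because $x^2\preccurlyeq zs$), we get $a=x^{-1}z=xt\,s^{-1}=xt\,\partial(s)\Delta^{-1}$ with $xt\,\partial(s)\in G^{+}$, whence $\inf(a)\geqslant -1$. Finally, $x^2\preccurlyeq zs$ is equivalent, after cancelling one factor $x$ on the left, to $x\preccurlyeq as$.

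Now suppose, for contradiction, that $a\notin G^{+}$, and consider the left fraction $a=D_l(a)^{-1}N_l(a)$ of Lemma~\ref{L:CharneySplittings}(i). Since $\inf(a)\geqslant -1$ the element $\Delta a$ is positive, so the final clause of Lemma~\ref{L:CharneySplittings}(i), applied with $c=\Delta$, gives $\Delta\succcurlyeq D_l(a)$; thus $\delta:=D_l(a)$ is a \emph{simple} element, nontrivial precisely because $a\notin G^{+}$. Applying the same clause with $c=x$ to the positive element $xa=z$ yields $x\succcurlyeq\delta$, i.e.\ $\delta$ is a suffix of $x$. Since $\inf(x)=0$ and $x$ is right-rigid, its right normal form $x=x_r\cdots x_1$ has the property that the greatest simple suffix $x\wedge^{\!\Lsh}\Delta$ of $x$ is the last factor $x_1$; hence $\delta$ is a nontrivial simple suffix of $x_1$, that is, $x_1\succcurlyeq\delta\neq 1$.

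It remains to contradict right-rigidity, which by Definition~\ref{D:Rigid} says exactly that the preferred simple suffix $x_1\wedge^{\!\Lsh}\partial^{-1}(x_r)$ is trivial; since we already have $x_1\succcurlyeq\delta$ with $\delta\neq 1$, it suffices to show that $\delta$ is \emph{also} a suffix of $\partial^{-1}(x_r)$, equivalently that $\delta\,x_r$ is simple (then $\delta\preccurlyeq^{\!\Lsh}x_1\wedge^{\!\Lsh}\partial^{-1}(x_r)=1$, a contradiction). This is where the full hypothesis is used: writing $x=Y\delta$ and $z=Y\nu$ with $Y=x\delta^{-1}\in G^{+}$ and $\nu=N_l(a)$, the relation $x\preccurlyeq as=\delta^{-1}\nu s$ becomes $\delta x\preccurlyeq\nu s$ with cofactor $t$, i.e.\ $\nu s=\delta x t=\delta\,x_r\cdots x_1\,t$, so the nontrivial $\delta$ is forced to sit immediately before the first factor $x_r$ of the second copy of $x$ inside $x^2$. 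Extracting from this equation---together with $\inf(z)=0$ and the coprimality $\delta\wedge\nu=1$---that $\delta\,x_r$ is simple is the crux of the argument, and the step I expect to be the main obstacle; it is precisely here that passing to $x^2$ (rather than $x$) and the right-rigidity of $x$ are indispensable. The remaining ingredients---the $\inf$-estimate and the identification of $\delta$ as a simple suffix of $x_1$---are routine.
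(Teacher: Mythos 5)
Your argument is set up correctly through the identification of $\delta=D_l(x^{-1}z)$ as a nontrivial simple suffix of $x_1$, but it then stalls exactly where you say it does: the claim that $\delta\,x_r$ is simple (equivalently that $\delta$ is a suffix of $\partial^{-1}(x_r)$, which would contradict right-rigidity) is the entire content of the lemma in your formulation, and you do not prove it. Nothing in the data you have assembled --- $\nu s=\delta x t$, $\delta\wedge\nu=1$, $\inf(z)=0$ --- obviously yields this; the relation $\nu s=\delta xt$ controls \emph{prefixes} of $\delta x$ (via left gcd's with $\nu$ and $\Delta$), whereas ``$\delta x_r$ simple'' is a statement about how $\delta$ interacts with the \emph{right} normal form of $x$, and I do not see a short bridge between the two. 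So as written this is a genuine gap, not a routine verification.

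The paper's proof avoids the issue entirely by running the same left-fraction argument one level up, on $x^{-2}z$ rather than on $a=x^{-1}z$. Assuming $x^{-2}zs\in G^{+}$, Lemma~\ref{L:CharneySplittings}(ii) gives $D_r(x^{-2}z)\preccurlyeq s$, hence by part (iii) the element $d:=D_l(x^{-2}z)$ is simple; by part (i) applied to $x^2\cdot(x^{-2}z)=z$, $d$ is a simple suffix of $x^2$. Right-rigidity enters only through the fact that the right normal form of $x^2$ is the concatenation of two copies of that of $x$, so $d$ is a suffix of the last factor $x_1$, and therefore $x^2d^{-1}=x\,x_r\cdots x_2\,(x_1d^{-1})\preccurlyeq z$ still contains a full copy of $x$ as a prefix --- contradicting $x\not\preccurlyeq z$ directly. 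In other words, by peeling the simple denominator off $x^2$ instead of off $x$, one never needs to show that the denominator is trivial, which is precisely the step your approach cannot supply. If you want to salvage your version, the cleanest fix is to switch from $a=x^{-1}z$ to $x^{-2}z$ and follow this route.
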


\begin{proof}
Let $x_r \cdots x_2\, x_1$ be the right normal form of~$x$.
Assume, contrary to the claim, that $x^{-2} zs$ is positive. 
Then by Lemma~\ref{L:CharneySplittings}(ii), $D_r(x^{-2}z)\preccurlyeq s$. 
We deduce (Lemma~\ref{L:CharneySplittings}(iii)) that $D_l(x^{-2}z)$ is also simple. 
On the other hand, by Lemma~\ref{L:CharneySplittings}(i), $x^2\succcurlyeq D_l(x^{-2}z)$
and, as $x$ is right-rigid, $x_1\succcurlyeq D_l(x^{-2}z)$. 
It then follows that 
$x x_r\cdots x_{2}\preccurlyeq  x^2 D_l(x^{-2}z)^{-1} \preccurlyeq z$, a contradiction. 
\end{proof}

\begin{proposition}[$\pi$ is $\ell(x)$-Lipschitz]\label{P:ProjectionLipschitz} 
Let $x\in G$ be a right-rigid element with $\inf(x)=0$
and canonical length $\ell=\ell(x)$. 
Suppose that $g,h\in G$ satisfy ${d_\X(g,h)=1}$. Then $|\lambda(g)-\lambda(h)|\leqslant 1$ and $d_\X(\pi(g),\pi(h))\leqslant \ell$. 
\end{proposition}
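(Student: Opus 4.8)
### Proof Plan

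The plan is to prove the two claims separately, with the Lipschitz bound on $\lambda$ being the essential ingredient from which the distance bound follows easily. Since $d_\X(g,h)=1$, by Lemma~\ref{L:AdjacencyInX} there is a proper simple element $s$ with (after swapping $g$ and $h$ if necessary) $\underline g\, s \in h\gpD$, so $\underline h = \underline g\, s\, \Delta^j$ for some integer $j$; because $\lambda$ and $\pi$ are invariant under right-multiplication by powers of $\Delta$, I may assume $\underline h = \underline g\, s$. My goal reduces to showing $|\lambda(g)-\lambda(h)|\leqslant 1$.

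First I would establish $\lambda(h) \geqslant \lambda(g)-1$ and $\lambda(g) \geqslant \lambda(h)-1$, i.e.\ that multiplying on the right by a single simple element changes $\lambda$ by at most one in each direction. Writing $m=-\lambda(g)$, the characterization in Lemma~\ref{L:ProjectionAndPrefixes2}(ii) tells me $x\not\preccurlyeq \underline{x^m g}$ and $x\preccurlyeq\underline{x^{m+1}g}$. The key observation is that $\underline{x^k h}$ and $\underline{x^k g}$ differ by right-multiplication by the simple element $s$ (up to a power of $\Delta$, which I can normalize away using $\inf(x^k)=0$ and right-rigidity as in the proof of Lemma~\ref{L:ProjectionAndPrefixes1}, so that $\underline{x^k g}=x^k\underline g$ whenever $x\preccurlyeq\underline g$). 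I would then invoke Lemma~\ref{L:ProjectionLipschitz}, applied with $z=\underline{x^m g}$: since $x\not\preccurlyeq z$, we get $x^2\not\preccurlyeq z s = \underline{x^m h}$ (again up to the normalizing $\Delta$-power), which via Lemma~\ref{L:ProjectionAndPrefixes2}(iii) forces $\lambda(h)$ not to exceed $\lambda(g)+1$. The reverse inequality follows by the symmetric argument, swapping the roles of $g$ and $h$ and using the complementary simple element $\partial s$ provided by Lemma~\ref{L:AdjacencyInX}.

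The main obstacle I anticipate is the careful bookkeeping of distinguished representatives: the quantities $\lambda(g)$ and $\lambda(h)$ are defined in terms of the prefix relation $x^{k}\preccurlyeq\underline{x^k g}$, but $\underline{x^k g}$ is $x^k g \Delta^{-\inf(x^k g)}$, and I must verify that the infimum shift for $g$ and for $h=gs$ agree or differ in a controlled way so that Lemma~\ref{L:ProjectionLipschitz} applies cleanly. Here right-rigidity of $x$ is crucial: as in Lemma~\ref{L:ProjectionAndPrefixes1}, once $x\preccurlyeq\underline{h}$ we have $\underline{x^k h}=x^k\underline h$, which stabilizes the infimum and lets the single simple letter $s$ propagate forward through all powers $x^k$ without interference. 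This is exactly the scenario Lemma~\ref{L:ProjectionLipschitz} was engineered to handle.

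Finally, for the distance claim, once $|\lambda(g)-\lambda(h)|\leqslant 1$ is established, the two projected points $\pi(g)=x^{\lambda(g)}\gpD$ and $\pi(h)=x^{\lambda(h)}\gpD$ satisfy
\[
d_\X(\pi(g),\pi(h)) = d_\X(x^{\lambda(g)}\gpD,\, x^{\lambda(h)}\gpD) = d_\X(\ast,\, x^{\lambda(h)-\lambda(g)}\gpD) \leqslant d_\X(\ast, x^{\pm 1}\gpD) = \ell,
\]
using the left-invariance of $d_\X$ from Notation~\ref{Not.} and the fact that $\inf(x)=0$, so that the distinguished representative of $x^{\pm 1}\gpD$ is $x^{\pm 1}$ itself, whose mixed normal form has length $\ell(x)=\ell$ by Proposition~\ref{P:PreferredPath}(iv). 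This completes the argument.
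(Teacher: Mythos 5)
Your proposal is correct and follows essentially the same route as the paper: the heart of both arguments is Lemma~\ref{L:ProjectionLipschitz} (from $x\not\preccurlyeq z$ deduce $x^2\not\preccurlyeq zs$) combined with Lemma~\ref{L:ProjectionAndPrefixes1} to step the power index down by one, applied in both directions. The bookkeeping of distinguished representatives that you flag as the main obstacle is sidestepped in the paper by applying Lemma~\ref{L:AdjacencyInX} directly to the already-translated adjacent vertices $x^{-\lambda(g)}g\gpD$ and $x^{-\lambda(g)}h\gpD$, which supplies a fresh simple element relating $\underline{x^{-\lambda(g)}g}$ and $\underline{x^{-\lambda(g)}h}$ on one side or the other, so there is no need to propagate the original $s$ through powers of~$x$.
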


\begin{proof}
Let $k_g=-\lambda(g)$ and $k_h=-\lambda(h)$. 
Apply Lemma~\ref{L:AdjacencyInX} to the adjacent vertices $x^{k_g} g\gpD$ and $x^{k_g} h\gpD$: one of the following holds for some simple element~$s$: $(\underline{x^{k_g} g})s=\underline{x^{k_g}h}$ or $(\underline{x^{k_g}h})s=\underline{x^{k_g}g}$. 
Let us consider the first case. By definition of~$k_g$, 
$x\not\preccurlyeq \underline{x^{k_g}g}$ and by 
 Lemma~\ref{L:ProjectionLipschitz}, 
 $x^2\not\preccurlyeq (\underline{x^{k_g}g})s=\underline{x^{k_g}h}$. 
 From Lemma~\ref{L:ProjectionAndPrefixes1}, we deduce that $x\not\preccurlyeq \underline {x^{k_g-1} h}$. 
 This means that $k_g-1\leqslant k_h$. 
 In the second case, we obtain that $x\not\preccurlyeq \underline{x^{k_g} h}$, whence $k_g\leqslant k_h$. 
A similar reasoning applied to the adjacent vertices $x^{k_h}g\gpD$ and $x^{k_h}h\gpD$ shows that either $k_h-1\leqslant k_g$ or $k_h\leqslant k_g$. 
In either case, we obtain the desired claim for $\lambda(g)=-k_g$ and ${\lambda(h)=-k_h}$. 

Finally, the inequality $d_\X(\pi(g),\pi(h))\leqslant \ell$ is an immediate consequence.
\end{proof}

Let us now combine the rigidity and the Morse hypothesis on~$x$:

\begin{proposition}\label{P:GeodCloseToProj}
Let $x\in G$ be a right-rigid Morse element with $\inf(x)=0$ and canonical length~$\ell$. 
Let $\pi$ be the Garside-theoretical projection to ${\rm axis}(x)$. 
Then there exists a $D\in \mathbb N$ such that for any $h\in G$ and for any $i\in\mathbb Z$, the preferred geodesic $A(x^i,h)$ in~$\X$ passes at distance at most~$D$ from~$\pi(h\gpD)$: 
there exists some $h'\in G$ with $h'\gpD$ belonging to $A(x^i,h)$ such that
$$ d_\X(h',\pi(h))\leqslant D $$
Specifically, we can take $D=(\ell+1)\cdot M_x$.
\end{proposition}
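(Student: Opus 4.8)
The plan is to reduce to the case $i=0$ and then split according to the sign of $\lambda(h)$. Since preferred paths are left-translation invariant (Proposition~\ref{P:PreferredPath}(iii)) and a direct computation from the definition of $\lambda$ gives $\lambda(x^{c}g)=\lambda(g)+c$ for all $c\in\Z$, one has $A(x^{i},h)=x^{i}A(1,x^{-i}h)$ and $\pi(x^{-i}h)=x^{-i}\pi(h)$; as $d_\X$ is left-invariant, it suffices to prove that $A(1,h)$ passes within $D$ of $\pi(h)=x^{\lambda(h)}$ for every $h$. Write $n=\lambda(h)$. If $n=0$ then $\pi(h)=\ast$ is the first vertex of $A(1,h)$ and there is nothing to prove. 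If $n\geqslant 1$, then $x^{n}\preccurlyeq\underline h$ by Lemma~\ref{L:ProjectionAndPrefixes2}, and Proposition~\ref{P:Matthieu'sLemma} (applied with this $n$) exhibits the vertex $\underline h\wedge\Delta^{\ell_n}$ of $A(1,h)$ at distance at most $2M_x$ from $x^{n}=\pi(h)$; since $\ell\geqslant 1$ this is at most $(\ell+1)M_x=D$. This settles the case $n\geqslant 0$.

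The main obstacle is the case $n\leqslant -1$, where the projection $x^{n}$ lies ``behind'' the starting vertex $\ast$ of the path, so that Proposition~\ref{P:Matthieu'sLemma} does not apply. Here I would translate by $x^{-n}=x^{m}$ (with $m=-n\geqslant 1$): this carries $A(1,h)$ to $A(x^{m},H')$ with $H'=x^{m}h$, and carries $\pi(h)=x^{n}$ to $\ast$; moreover $\lambda(H')=0$, so $x\not\preccurlyeq\underline{H'}$. Thus it remains to show that $A(x^{m},H')$ passes within $D$ of $\ast$. The natural candidate vertex is the meet $p=x^{m}\wedge\underline{H'}$, which lies on the path because $A(x^{m},H')$ is the concatenation of $A(x^{m},p)$ and $A(p,H')$ (Proposition~\ref{P:PreferredPath}(i)). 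So the whole problem reduces to the estimate $d_\X(\ast,p)\leqslant D$.

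The key point is the identity $\lambda(p)=0$, which I would obtain from Lemma~\ref{L:ProjectionAndPrefixes2}(ii) with $m=0$ by checking its two conditions. First, $p\preccurlyeq\underline{H'}$ together with $x\not\preccurlyeq\underline{H'}$ gives $x\not\preccurlyeq p$ (and also $\inf(p)=0$); this is condition (ii)(a). Second, $p\preccurlyeq x^{m}$ gives $xp\preccurlyeq x^{m+1}$, and since $\inf(x^{m+1})=0$ this forces $\inf(xp)=0$, so $\underline{xp}=xp\succcurlyeq x$, i.e. $x\preccurlyeq\underline{xp}$; this is condition (ii)(b). Hence $\lambda(p)=0$. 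Now $1\preccurlyeq p\preccurlyeq x^{m}$, so Lemma~\ref{L:ConcatQuasiGeod}(i) makes the concatenation of $A(1,p)$ and $A(p,x^{m})$ a $(2,0)$-quasi-geodesic joining the two \emph{axis} vertices $x^{0}=\ast$ and $x^{m}$. Because $x$ is Morse, this quasi-geodesic stays in the $M_x$-neighbourhood of ${\rm axis}(x)$, and since $p$ is one of its vertices, $d_\X(p,x^{s})\leqslant M_x$ for some $s\in\Z$.

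Finally I would combine this with the Lipschitz control of $\lambda$. As $\lambda$ varies by at most $1$ along each edge (Proposition~\ref{P:ProjectionLipschitz}), one has $|s|=|\lambda(x^{s})-\lambda(p)|\leqslant d_\X(p,x^{s})\leqslant M_x$; and since $d_\X(\ast,x^{s})=|s|\,\ell$, the triangle inequality yields $d_\X(\ast,p)\leqslant |s|\,\ell+M_x\leqslant M_x\ell+M_x=(\ell+1)M_x=D$, as desired. The heart of the argument — and the only place where the genuine geometry (Morse-ness) enters in this case — is the passage from the quasi-geodesic between two axis points to the bound $d_\X(p,x^{s})\leqslant M_x$; the computation $\lambda(p)=0$ is exactly what guarantees that this nearby axis point $x^{s}$ is itself close to $\ast$, and it is this interplay of a right-normal-form condition (right-rigidity, through $\lambda$) with a left-normal-form object (the preferred path) that is responsible for the slightly surprising role of right-rigidity noted in Remark~\ref{R:RightRigidity}.
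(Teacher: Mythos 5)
Your proof is correct and follows essentially the same route as the paper's: after normalizing via the $\langle x\rangle$-action, the easy case is handled by Proposition~\ref{P:Matthieu'sLemma}, and in the hard case your candidate vertex $p=x^m\wedge\underline{H'}$ is exactly the paper's $h'=x^i\wedge\underline h$ (up to the choice of normalizing $\lambda$ to $0$ rather than $1$), with the same two key observations (computing $\lambda$ of the meet via Lemma~\ref{L:ProjectionAndPrefixes2}, and placing the meet in the $M_x$-neighbourhood of the axis via Lemma~\ref{L:ConcatQuasiGeod}(i)). The final bookkeeping via the Lipschitz property of $\lambda$ is equivalent to the paper's use of the $\ell$-Lipschitz property of $\pi$ and yields the same constant $D=(\ell+1)M_x$.
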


\begin{figure}[htb]\label{F:GeodCloseToProj}
\begin{center}
\includegraphics{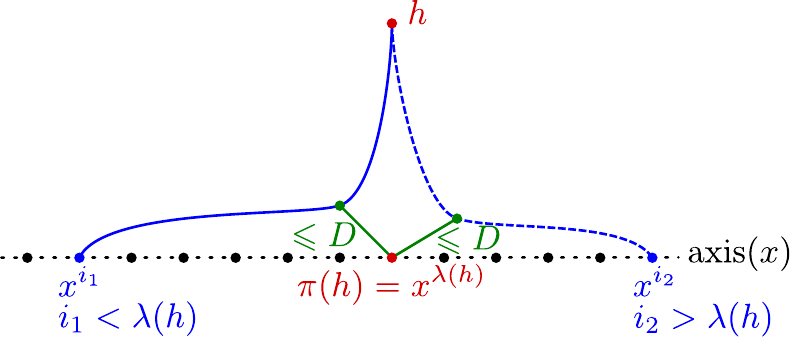}
\end{center}
\caption{The statement of Proposition~\ref{P:GeodCloseToProj}}
\end{figure}

\begin{proof}
We treat the cases $i<\lambda(h)$ and $i>\lambda(h)$ separately.\\

{\bf Case $i<\lambda(h)$ : } After the action of $x^{-i}$, 
we can assume without loss of generality that $i=0$. Noting that $x^{\lambda(h)}\preccurlyeq \underline h$,
we are then precisely in the situation of Proposition~\ref{P:Matthieu'sLemma}. Note that by the rigidity hypothesis, $\ell(x^{\lambda(h)})= \lambda(h)\cdot \ell$. Thus, 
if we define $h'=\underline h \wedge \Delta^{\lambda(h)\cdot\ell}$, we have 
$$ d_\X(h', \pi(h)) \leqslant 2\cdot M_x.$$

{\bf Case $i>\lambda(h)$ : } This time we will assume, again without loss of generality (after the action of $x^{-\lambda(h)+1}$), that $\lambda(h)=1$. 
Thus $\underline{\pi(h)}=x$ and $x\preccurlyeq \underline h$ but $x^2\not\preccurlyeq \underline h$ (Lemma~\ref{L:ProjectionAndPrefixes2}).

Let $h'=x^i \wedge \underline h$ and note that $\underline{h'}=h'$. 
We know from Lemma~\ref{P:PreferredPath}(i) that the vertex~$h'\gpD$ lies on the preferred geodesic $A(h,x^i)$, 
and our aim now is to bound its distance from~$\pi(h\gpD)$.

We make two observations about the vertex~$h'\gpD$. The first observation is that it lies in the $M_x$-neighbourhood of axis$(x)$, where we recall that $M_x$ is the Morse constant for $(2,0)$-quasi-geodesics with endpoints on axis$(x)$.
This follows from Lemma~\ref{L:ConcatQuasiGeod}(i) and the fact that $x\preccurlyeq \underline h'\preccurlyeq x^i$. 

The second observation about the vertex~$h'\gpD$ is that it has the same projection to the axis as $h\gpD$:
$$\pi(h')=\pi(h)$$
Here is a proof of this fact. We have to prove that $\lambda(x^i\wedge\underline h) = \lambda(h)=1$. By Lemma~\ref{L:ProjectionAndPrefixes2}, it suffices to prove that
$$ x\preccurlyeq x^i \wedge \underline h \text{ \ \ but \ \ } x^2 \not\preccurlyeq x^i\wedge \underline h.$$
Keeping in mind the hypothesis that $i\geqslant 2$,
this follows immediately from the analogous condition on $\underline h$.
This completes the proof of the second observation about~$h'\gpD$.

By the first observation, there exists an integer~$k$ with 
$$d_\X(h',x^k)\leqslant M_x$$
Since the projection~$\pi$ is $\ell$-Lipschitz (Proposition~\ref{P:ProjectionLipschitz}), we have
$$ d_\X(\pi(h'),x^k) = d_\X(\pi(h'),\pi(x^k)) \leqslant \ell\cdot M_x$$
Applying the triangle-inequality we obtain
$$ d_\X(h',\pi(h')) \leqslant \ell \cdot M_x + M_x = (\ell+1)\cdot M_x $$
Also, by the second observation above we have $\pi(h')=\pi(h)$, so
$$ d_\X(h',\pi(h)) \leqslant (\ell+1)\cdot M_x $$
The proof of Proposition~\ref{P:GeodCloseToProj} is complete, with 
\[ D = \max \left( 2\cdot M_x,(\ell+1)\cdot M_x\phantom{^!}\right) = (\ell+1)\cdot M_x \qedhere\]
\end{proof}

We deduce that $\pi$ is uniformly close to the closest point projection:
\begin{corollary}\label{C:CloseToClosest}
Let $x\in G$ with $\inf(x)=0$ be a right-rigid Morse element, and $\pi$ be the Garside-theoretical projection to ${\rm axis}(x)$. 
Let $h\in G$ and let $x^k\gpD$ be any point of the axis such that 
$$
d_\X(h,x^k) = \min_{i\in\mathbb Z} d_\X(h,x^i)
$$
Then $$d_\X(\pi(h),x^k) \leqslant 2D$$
where $D$ is the constant promised by Proposition~\ref{P:GeodCloseToProj}.
\end{corollary}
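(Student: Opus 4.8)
The plan is to derive the corollary formally from Proposition~\ref{P:GeodCloseToProj}, using in addition that preferred paths are genuine geodesics (Proposition~\ref{P:PreferredPath}(iv)) and the defining minimality of the closest axis point~$x^k$. The picture is that the preferred geodesic running from~$x^k$ to~$h$ both passes within~$D$ of~$\pi(h)$ and, heading straight to the closest axis point, cannot ``waste'' length; together these constrain~$\pi(h)$ to lie near~$x^k$.

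First I would apply Proposition~\ref{P:GeodCloseToProj} with $i=k$ to produce a vertex~$h'\gpD$ on the preferred geodesic $A(x^k,h)$ with $d_\X(h',\pi(h))\leqslant D$. Since $A(x^k,h)$ is a geodesic in~$\X$, the point~$h'$ lies between its endpoints, giving the exact length additivity
\[ d_\X(x^k,h') + d_\X(h',h) = d_\X(x^k,h). \]

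Next I would feed in the hypothesis that~$x^k$ is a closest axis point to~$h$. Since $\pi(h)=x^{\lambda(h)}\gpD$ is itself a vertex of $\mathrm{axis}(x)$, minimality gives $d_\X(h,x^k)\leqslant d_\X(h,\pi(h))$. Combining this with the triangle inequality $d_\X(h',h)\geqslant d_\X(\pi(h),h)-d_\X(\pi(h),h')\geqslant d_\X(\pi(h),h)-D$ and the additivity above yields
\[ d_\X(x^k,h') = d_\X(x^k,h)-d_\X(h',h) \leqslant d_\X(x^k,h)-d_\X(\pi(h),h)+D \leqslant D. \]
A final application of the triangle inequality then closes the argument:
\[ d_\X(x^k,\pi(h)) \leqslant d_\X(x^k,h') + d_\X(h',\pi(h)) \leqslant D+D = 2D. \]

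I do not expect any genuine obstacle: the statement is a soft consequence of the contraction estimate of Proposition~\ref{P:GeodCloseToProj}. The one point that merits care is that the additivity $d_\X(x^k,h')+d_\X(h',h)=d_\X(x^k,h)$ must hold on the nose, which is why it is essential that~$h'$ sits on an \emph{honest} geodesic rather than merely a quasi-geodesic; this exactness is precisely what Proposition~\ref{P:PreferredPath}(iv) guarantees, and it is the reason the preceding development was arranged to yield true geodesics in~$\X$.
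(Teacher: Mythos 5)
Your proof is correct and follows essentially the same route as the paper: apply Proposition~\ref{P:GeodCloseToProj} with $i=k$ to get a point $h'\gpD$ on the geodesic $A(x^k,h)$ within $D$ of $\pi(h)$, use geodesic additivity together with the minimality of $x^k$ to conclude $d_\X(x^k,h')\leqslant D$, and finish with the triangle inequality. The paper phrases the middle step more compactly as $d_\X(x^k,h')\leqslant d_\X(\pi(h),h')$, but the underlying computation is identical to yours.
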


\begin{figure}[htb]\label{F:TwoProjectionsClose}
\begin{center}
\includegraphics{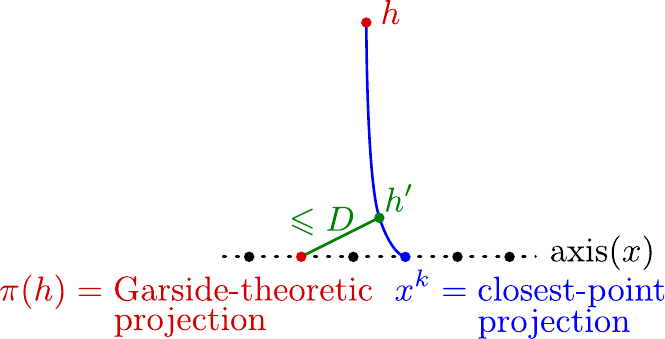}
\end{center}
\caption{The projection $\pi$ is uniformly close to any closest-point projection}
\end{figure}

\begin{proof}
By Proposition~\ref{P:GeodCloseToProj}, there is a point~$h'\gpD$ on the geodesic $A(h,x^k)$ such that $d_\X(\pi(h),h')\leqslant D$. 
Since $x^k\gpD$ is a point on the axis as close as possible to~$h\gpD$, we must have
$$ d_\X(x^k,h') \leqslant d_\X(\pi(h),h') \leqslant D $$
By the triangle inequality, we conclude
\[ d_\X(\pi(h), x^k) \leqslant 2D \qedhere\]
\end{proof}


\section{The strong contraction property}\label{S:StrongContraction}

In this section we recall the definition of the strong contraction property and the strong constriction property.
Then we prove the main result of this paper: in a $\Delta$-pure Garside group of finite type~$G$, the axis of any Morse element is strongly contracting.

The following two definitions and a proof of their equivalence can be found in~\cite{ArzhCashenTao}. 

\begin{definition}\label{D:StronglyContracting}
Let $(X,d)$ be a metric space and let $\mathcal A$ be any subset of $X$. A map $\rho:X\to \mathcal A$ is \emph{$C$-strongly contracting} for $C\geqslant 0$ if the following hold:

\begin{itemize}
\item[(i)]$\rho$ is coarsely equivalent to $id_{\mathcal A}$ on $\mathcal A$:
for every $a\in \mathcal A$, $d(\rho(a),a)\leqslant C$,
\item[(ii)]
for all $x\in X$, $d(x,\rho(x))-d(x,\mathcal A)\leqslant C$,
\item[(iii)] for all $u,v\in X$, $d(u,v)<d(v,\mathcal A)-C$ implies that $d(\rho(u),\rho(v))\leqslant C$ (i.e.\ if a ball in~$\X$ is disjoint from~a $C$-neighborhood of $\mathcal A$, then its image under $\rho$ is contained in a ball of radius~$C$).
\end{itemize}
The map $\rho$ is \emph{strongly contracting} if 
there exists a non-negative integer~$C$ such that $\rho$ is $C$-strongly contracting. The subset $\mathcal A\subset X$ is \emph{strongly contracting} if there exists a strongly contracting map $X\to \mathcal A$.
\end{definition}

Note (\cite[Lemma 2.8]{ArzhCashenTao}) that a strongly contracting map ${\rho\co X\to \mathcal A}$ satisfies in fact a strengthened version of clause (ii) in Definition~\ref{D:StronglyContracting}: namely, $\rho$ is 
\emph{coarsely a closest point-projection} to $\mathcal A$, meaning that for all $x\in X$, there exists an $a\in \mathcal A$ with $d(x,\mathcal A) =d(x,a)$ such that $d(\rho(x),a)$ is uniformly bounded.

As proven in \cite[Proposition 2.9]{ArzhCashenTao}, a map $\rho$ is strongly contracting if and only if it is \emph{strongly constricting}; this alternative characterization will be useful in Lemma~\ref{L:Hausdorff} and Section~\ref{S:LoxOnCAL}:

\begin{definition}\label{D:StronglyConstricting}
Let $(X,d)$ be a metric space and let $\mathcal A$ be any subset of $X$. A map $\rho:X\to \mathcal A$ is \emph{$C$-strongly constricting} for $C\geqslant 0$ if the following hold:
\begin{itemize}
    \item[(i)] $\rho$ is coarsely equivalent to $id_{\mathcal A}$ on $\mathcal A$: for every $a\in \mathcal A$, $d(\rho(a),a)\leqslant C$,
    \item[(ii)]
    for every geodesic~$\gamma$ in~$X$ with endpoints $x_0$ and~$x_1$, if $d(\rho(x_0), \rho(x_1))>C$, then $d(\rho(x_i),\gamma)<C$ for $i=0,1$. 
\end{itemize}
The map $\rho$ is \emph{strongly constricting} if there exists a non-negative integer~$C$ such that~$\rho$ is $C$-strongly constricting.
\end{definition}

\begin{lemma}\label{L:Hausdorff}
Let $(X,d)$ be a metric space and let $\mathcal A$ be a subset of $X$. Let $\rho\co X\to \mathcal A$  be a  strongly contracting map. 
\begin{enumerate}
    \item[(i)] Suppose that $\mathcal B\subset X$ is another subset of~$X$ with $d_{\rm Hausdorff}(\mathcal A,\mathcal B)<\infty$. Then there is a strongly contracting map $\rho'\co X\to \mathcal B$.
    \item[(ii)] Let $(X',d')$ be another metric space. Suppose there is an isometric and quasi-surjective embedding $\iota\co X\hookrightarrow X'$. 
    Then there is a strongly contracting map $\rho':X'\to \iota(\mathcal A)$.
\end{enumerate}
\end{lemma}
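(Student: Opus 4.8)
The plan is to prove the two transfer statements separately, in each case producing an explicit candidate map and verifying the three clauses of Definition~\ref{D:StronglyContracting} (or, where more convenient, the two clauses of strong constriction from Definition~\ref{D:StronglyConstricting}, invoking their equivalence). For part~(i), suppose $\rho\co X\to\mathcal A$ is $C$-strongly contracting and $d_{\rm Hausdorff}(\mathcal A,\mathcal B)=H<\infty$. The natural idea is to choose, for each $a\in\mathcal A$, a point $\sigma(a)\in\mathcal B$ with $d(a,\sigma(a))\leqslant H$, and set $\rho'=\sigma\circ\rho\co X\to\mathcal B$. Clause~(i) is then immediate from the triangle inequality (a point $b\in\mathcal B$ is within $H$ of some $a\in\mathcal A$, and $\rho'(b)=\sigma(\rho(b))$ is within $C+2H$ of $b$). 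For clause~(ii), since $\rho'$ and $\rho$ differ by at most $H$ pointwise and $d(x,\mathcal B)$ and $d(x,\mathcal A)$ differ by at most $H$, the defect $d(x,\rho'(x))-d(x,\mathcal B)$ is controlled by $C+3H$. For clause~(iii), one observes that $d(\rho'(u),\rho'(v))\leqslant d(\rho(u),\rho(v))+2H$, and the hypothesis $d(u,v)<d(v,\mathcal B)-C'$ with a suitably enlarged constant $C'=C+H$ forces $d(u,v)<d(v,\mathcal A)-C$, so the original contraction bound applies. Thus $\rho'$ is $C'$-strongly contracting for $C'=C+3H$ (taking the largest of the constants produced).

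For part~(ii), suppose $\iota\co X\hookrightarrow X'$ is an isometric, quasi-surjective embedding, so that there is a constant $Q$ with every point of $X'$ within $Q$ of $\iota(X)$. I would first fix a coarse inverse: for each $x'\in X'$ choose $r(x')\in X$ with $d'(x',\iota(r(x')))\leqslant Q$, and for $x'\in\iota(X)$ simply take $r(\iota(x))=x$. Then define $\rho'=\iota\circ\rho\circ r\co X'\to\iota(\mathcal A)$. The key point is that because $\iota$ is \emph{isometric}, distances inside $\iota(X)$ (in particular between points of $\iota(\mathcal A)$, and between a point and $\iota(\mathcal A)$) are computed exactly as in $X$; the only errors come from the coarse inverse $r$, each of size at most $Q$. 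With this in hand, clauses~(i)--(iii) transfer by the same bookkeeping as in part~(i): clause~(i) follows since $r$ restricted to $\iota(\mathcal A)$ returns the original point; clause~(ii) follows because $d'(x',\iota(\mathcal A))$ agrees with $d(r(x'),\mathcal A)$ up to $Q$ (one inequality uses quasi-surjectivity to realize the near-closest point of $\iota(\mathcal A)$, exploiting that $\mathcal A\subseteq X$ so its image lies in $\iota(X)$); and clause~(iii) follows since $d'(\rho'(u'),\rho'(v'))=d(\rho(r(u')),\rho(r(v')))$ exactly, while $d'(u',v')$ and $d'(v',\iota(\mathcal A))$ differ from $d(r(u'),r(v'))$ and $d(r(v'),\mathcal A)$ by a bounded amount. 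I would absorb all these additive errors into a single enlarged constant.

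The main obstacle, and the step deserving the most care, is clause~(iii) in part~(ii): it quantifies over \emph{all} $u',v'\in X'$, including points far from $\iota(X)$, and the disjoint-ball condition $d'(u',v')<d'(v',\iota(\mathcal A))-C'$ must be translated back into the corresponding condition in $X$ for $r(u'),r(v')$ with the slack $C$ of the original map respected. The translation is safe only if the enlarged constant $C'$ exceeds $C$ plus twice the quasi-surjectivity constant $Q$ (to cover the two applications of $r$); the one subtlety is checking that $d'(v',\iota(\mathcal A))\geqslant d(r(v'),\mathcal A)-Q$, which uses that $\iota$ is isometric on all of $X$, not merely on $\mathcal A$. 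I expect the verification itself to be routine triangle-inequality bookkeeping once the coarse inverse $r$ is fixed; the genuinely important modelling choice is to define $\rho'$ by conjugating $\rho$ with $r$ and its section, rather than attempting to define a projection intrinsically on $X'$. I would remark at the end that part~(ii), combined with Proposition~\ref{P:XandGammaBarAreQI}, is exactly the mechanism that will let us pass strong contraction from $\X$ to $\overline\Gamma$ in the proof of Theorem~\ref{T:main}.
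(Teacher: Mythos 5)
Your proposal is correct and follows essentially the same route as the paper: both define $\rho'$ by post-composing (resp.\ conjugating) $\rho$ with a nearest-point choice into $\mathcal B$ (resp.\ a coarse inverse of $\iota$), and both reduce to triangle-inequality bookkeeping with an enlarged constant (the paper uses $C+2\delta$ for (i) via the strong-constriction characterization and $C+3\varepsilon$ for (ii), verifying only clause~(iii) in detail). Your accounting of the errors matches, including the extra $Q$ needed to compare $d'(v',\iota(\mathcal A))$ with $d(r(v'),\mathcal A)$, so nothing is missing.
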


\begin{proof}
For (i), let $\delta=d_{\rm Hausdorff}(\mathcal A,\mathcal B)$. We construct $\rho'$ by choosing, for any $x\in X$, a point $b\in \mathcal B$ with $d(b,\rho(x))\leqslant \delta$, and declaring that $\rho'(x)=b$. Thus $\rho$ and $\rho'$ are $\delta$-coarsely equivalent. Now it is an easy exercise to show that if $\rho$ is $C$-strongly constricting then $\rho'$ is $(C+2\delta)$ strongly constricting.

For (ii), let $\varepsilon$ be such that the $\varepsilon$-neighbourhood of~$\iota(X)$ in~$X'$ is all of~$X'$. We define $\rho'$ by choosing, for every $x'\in X'$, a point $x\in X$ with $d'(\iota(x),x')\leqslant \varepsilon$, and declaring that $\rho'(x')=\iota(\rho(x))$. We have to prove that $\rho'$ is strongly contracting. More precisely, supposing that $\rho\co X\to \mathcal A$ is $C$-strongly contracting, our aim is to prove that $\rho'$ is $(C+3\epsilon)$-strongly contracting. We shall prove only part (iii) of Definition~\ref{D:StronglyContracting}; the other two clauses can be checked easily.

For any point $v'\in X'$, consider the ball~$B'$ centered in~$v'$ and of radius $d'(v',\mathcal A)-C-3\varepsilon$. If we choose a point of $\iota(X)$ at distance at most~$\varepsilon$ from each point of~$B'$, we obtain a subset of~$X$ which is contained in a ball in~$X$ centered at some point $v$ (with $d'(\iota(v),v')\leqslant\varepsilon$) and of radius $d'(v',\mathcal A)-C-\varepsilon$. Since $d'(v',\mathcal A)-C-\varepsilon \leqslant d(v,\mathcal A)-C$, the projection $\rho'(B')$ is contained in $\iota(\rho(B))$, where $B$ is the ball in~$X$ centered in~$v$ and of radius $d(v,\mathcal A)-C$. By hypothesis, ${\rm diam}(\rho(B))\leqslant C < C+3\varepsilon$, which is what we wanted to prove.
\end{proof}

\begin{proposition}\label{P:StrongContr}
Let $G$ be a $\Delta$-pure Garside group of finite type. Let $x\in G$ with $\inf(x)=0$ be a right-rigid Morse element. 
The Garside-theoretical projection $\pi$ to ${\rm axis}(x)$
is $5D$-strongly contracting, where $D$ is the constant promised by Proposition~\ref{P:GeodCloseToProj}.
\end{proposition}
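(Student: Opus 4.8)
My plan is to verify the three clauses of Definition~\ref{D:StronglyContracting} directly for the Garside-theoretical projection~$\pi$, leaning on the two main tools already established: the fact that $\pi$ is $\ell$-Lipschitz (Proposition~\ref{P:ProjectionLipschitz}) and the fact that $\pi$ is uniformly close to any closest-point projection (Corollary~\ref{C:CloseToClosest}). The constant $D=(\ell+1)\cdot M_x$ from Proposition~\ref{P:GeodCloseToProj} will control everything, and the target constant $5D$ suggests that each clause should cost at most a small multiple of~$D$.

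Let me sketch the clauses in order. Clause~(i) is essentially trivial: by Lemma~\ref{L:ProjectionAndPrefixes2} we have $\lambda(x^k)=k$ for every $k$, hence $\pi(x^k\gpD)=x^k\gpD$ exactly, so $d_\X(\pi(a),a)=0\leqslant 5D$ for every $a\in{\rm axis}(x)$. For clause~(ii), I would fix $h\in G$ and pick a closest point $x^k\gpD$ on the axis, so that $d_\X(h,{\rm axis}(x))=d_\X(h,x^k)$. Corollary~\ref{C:CloseToClosest} gives $d_\X(\pi(h),x^k)\leqslant 2D$, and then the triangle inequality yields
$$ d_\X(h,\pi(h)) \leqslant d_\X(h,x^k)+d_\X(x^k,\pi(h)) \leqslant d_\X(h,{\rm axis}(x)) + 2D, $$
so $d_\X(h,\pi(h))-d_\X(h,{\rm axis}(x))\leqslant 2D\leqslant 5D$, as required.

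The real work is clause~(iii), which is the genuine contraction statement. The plan is to take $u,v\in G$ with $d_\X(u,v)<d_\X(v,{\rm axis}(x))-5D$ and to bound $d_\X(\pi(u),\pi(v))$. The natural strategy is to compare both projections to the data supplied by Proposition~\ref{P:GeodCloseToProj}: the preferred geodesic $A(\pi(v)\text{-side},v)$ passes within~$D$ of~$\pi(v)$, and any closest-point projection of~$v$ lands within~$2D$ of~$\pi(v)$ by Corollary~\ref{C:CloseToClosest}. The point is that if $u$ is much closer to~$v$ than~$v$ is to the axis, then a geodesic from~$u$ to the axis, concatenated appropriately with a short geodesic from~$u$ to~$v$, produces a path from~$v$ to the axis whose closest-point behaviour forces the projections of~$u$ and~$v$ to nearly coincide. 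Concretely, I would let $x^j\gpD$ realize $d_\X(u,{\rm axis}(x))$ and $x^k\gpD$ realize $d_\X(v,{\rm axis}(x))$; the separation hypothesis together with the triangle inequality shows $d_\X(u,x^k)$ and $d_\X(v,x^j)$ are both large, and comparing these against $d_\X(v,x^k)=d_\X(v,{\rm axis}(x))$ should pin $j$ and $k$ to within a bounded range. Then the $\ell$-Lipschitz property of~$\pi$ converts that bounded difference of axis-indices into a bounded distance $d_\X(\pi(u),\pi(v))$, and Corollary~\ref{C:CloseToClosest} bridges from the closest points $x^j,x^k$ back to $\pi(u),\pi(v)$. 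Carefully accounting for the constants ($2D$ from each application of Corollary~\ref{C:CloseToClosest}, plus the gap $5D$, plus the Lipschitz slack) should land the diameter of the image at most~$5D$.

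I expect clause~(iii) to be the main obstacle, and within it the delicate point is the bookkeeping that converts the metric separation hypothesis $d_\X(u,v)<d_\X(v,{\rm axis}(x))-5D$ into a bound on $|j-k|$ (equivalently on $|\lambda(u)-\lambda(v)|$). The subtlety is that a priori the closest-point projections of~$u$ and~$v$ could differ a lot, and it is only the strong-contraction phenomenon encoded in Proposition~\ref{P:GeodCloseToProj} — that preferred geodesics to the axis hug the Garside-theoretical projection — that prevents this; so the argument must route through that proposition rather than through closest-point projections alone. Once the index gap is bounded by a constant independent of~$u,v$, the remaining steps are routine triangle-inequality and Lipschitz estimates, and the choice of $5D$ is simply large enough to absorb all the accumulated error terms.
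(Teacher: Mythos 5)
Your treatment of clauses (i) and (ii) of Definition~\ref{D:StronglyContracting} is correct and matches the paper: clause (i) follows from $\lambda(x^k)=k$ (Lemma~\ref{L:ProjectionAndPrefixes2}) and clause (ii) from Corollary~\ref{C:CloseToClosest}. The gap is in clause (iii), where your concrete plan does not work. You propose to let $x^j\gpD$ and $x^k\gpD$ be closest points of the axis to $u$ and $v$ and to ``pin $j$ and $k$ to within a bounded range'' using the separation hypothesis and the triangle inequality. No such bound follows from the triangle inequality: in the Euclidean plane with $\mathcal A$ a line, the points $v=(0,100)$ and $u=(99,100)$ satisfy $d(u,v)<d(v,\mathcal A)$, yet their closest-point projections are $99$ apart. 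The triangle inequality is blind to contraction; the entire content of the proposition is that the Morse hypothesis supplies it. You do acknowledge that the argument ``must route through'' Proposition~\ref{P:GeodCloseToProj}, but you never say how, and that is exactly the missing idea.

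The paper's mechanism uses two ingredients absent from your sketch: convexity of balls in $\X$ (Proposition~\ref{P:PreferredPath}(v)) and the fellow-traveller property (Lemma~\ref{L:AFellowTraveller}). Write $h=v$, $g=u$ and $r_*=d_\X(h,\pi(h))$, so $d_\X(h,g)\leqslant d_\X(h\gpD,{\rm axis}(x))\leqslant r_*$. Apply Proposition~\ref{P:GeodCloseToProj} to the preferred geodesic $A(g,\pi(h))$ --- from $g$ to the projection of the \emph{other} point --- to get $g'\gpD$ on it with $d_\X(g',\pi(g))\leqslant D$. Both endpoints of $A(g,\pi(h))$ lie in the ball of radius $r_*$ about $h\gpD$, so by convexity $d_\X(h,g')\leqslant r_*$. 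Next, $A(h,g')$ fellow-travels $A(h,\pi(g))$ within Hausdorff distance $D$, and the latter passes within $D$ of $\pi(h)$ by Proposition~\ref{P:GeodCloseToProj} again; so $A(h,g')$ contains a vertex $h'\gpD$ with $d_\X(h',\pi(h))\leqslant 2D$, whence $d_\X(h,h')\geqslant r_*-2D$. Since $A(h,g')$ is a geodesic of length at most $r_*$, this leaves $d_\X(h',g')\leqslant 2D$, and the chain $\pi(g)\to g'\to h'\to\pi(h)$ gives $D+2D+2D=5D$. Without this (or an equivalent) use of Proposition~\ref{P:GeodCloseToProj} applied to geodesics ending at the projection of the other point, combined with ball convexity, your clause (iii) remains unproved.
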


\begin{proof}
The first and second conditions of Definition~\ref{D:StronglyContracting} follow respectively from 
the last statement of Lemma~\ref{L:ProjectionAndPrefixes2} and Corollary~\ref{C:CloseToClosest}, which asserts the stronger condition that $\pi$ is coarsely a closest-point projection. 
Let us prove that condition~(iii) is satisfied.  Let $h,g\in G$ be such that
$d_{\X}(h,g)\leqslant d_{\mathcal X}(h\gpD,{\rm axis}(x))$ (that is, $g\gpD$ lies in a ball in $\X$ centered at~$h\gpD$ and disjoint from the axis of $x$).

Let us denote $r_*=d_\X(h,\pi(h))$. 
Now, 
$$d_\X(h,g) \leqslant d_\X(h\gpD,{\rm axis}(x)) \leqslant r_*.$$
By Proposition~\ref{P:GeodCloseToProj}, the preferred geodesic $A(g,\pi(h))$ contains a point~$g'\gpD$ at distance at most~$D$ from~$\pi(g\gpD)$.
By Lemma~\ref{P:PreferredPath}(v) (convexity of balls), we have $d_\X(h,g')\leqslant r_*$.

\begin{figure}[htb]
\begin{center}
\includegraphics{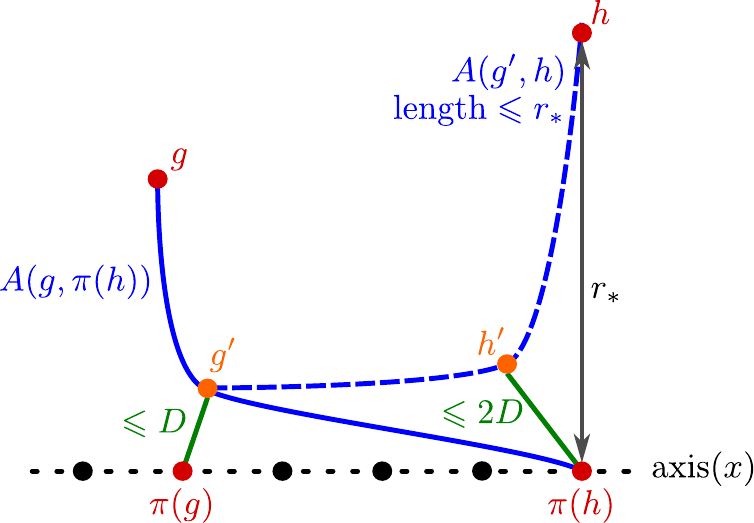}
\end{center}
\caption{The proof of Proposition~\ref{P:StrongContr}}
\end{figure}

Let us now study the preferred geodesic $A(h,g')$. We have just seen that it is of length at most~$r_*$. 
Moreover, by Proposition~\ref{L:AFellowTraveller}, it is at Hausdorff distance at most~$D$ from~$A(h,\pi(g))$, 
which in turn passes at distance at most~$D$ from~$\pi(h\gpD)$ (by Proposition~\ref{P:GeodCloseToProj} again). Thus~$A(h,g')$ contains a point~$h'\gpD$ at distance at most $2D$ from~$\pi(h\gpD)$. 

Now $d_\X(h',h)\geqslant r_*-2D$ by the triangle inequality. Therefore $$d_\X(g',h') = d_\X(g',h) - d_\X(h',h) \leqslant 2D$$
and we obtain the desired conclusion:
\begin{align*}
    d_\X(\pi(g), \pi(h)) & \leqslant  d_\X(\pi(g),g') + d_\X(g',h') + d_\X(h',\pi(h))\\
      & \leqslant D + 2D + 2D = 5D \qedhere 
\end{align*}
\end{proof}

The following is the main result of this paper:

\begin{theorem}[Strong contraction property of axes]\label{T:StrongContr}
Let $G$ be a $\Delta$-pure Garside group of finite type. Let $g\in G$ be a Morse element. Then
\begin{enumerate}
    \item[(i)] in $\X=\Gamma(G)/\gpD$, the axis $\{g^k\gpD\ | \ k\in\mathbb Z\}\subset \X$ is strongly contracting.
    \item[(ii)] in $\overline \Gamma = \Gamma(G/Z(G))$, the axis $\{g^kZ(G) \ | \ k\in\mathbb Z\}\subset \overline \Gamma$ is strongly contracting.
\end{enumerate}
\end{theorem}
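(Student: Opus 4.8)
The plan is to reduce Theorem~\ref{T:StrongContr} to Proposition~\ref{P:StrongContr}, which already does the hard analytic work for a \emph{right-rigid} Morse element~$x$ with $\inf(x)=0$. The theorem as stated concerns an \emph{arbitrary} Morse element~$g$, so the whole task is a sequence of reductions that transfer the strong contraction property from the special element furnished by Proposition~\ref{P:PowerConjToRigid} back to~$g$ itself, and then from~$\X$ to~$\overline\Gamma$. The key tool for these transfers is Lemma~\ref{L:Hausdorff}: part~(i) lets us move between subsets that are a finite Hausdorff distance apart, and part~(ii) lets us push a strongly contracting map across the isometric quasi-surjective embedding $\iota\co\X\hookrightarrow\overline\Gamma$ of Proposition~\ref{P:XandGammaBarAreQI}.

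First I would prove part~(i). By Proposition~\ref{P:PowerConjToRigid}, some power $g^N$ is conjugate to a right-rigid element of the form $\Delta^{em}x$ with $\inf(x)=0$ and $x$ right-rigid; say $g^N=w\,\Delta^{em}x\,w^{-1}$ for some $w\in G$. Since $\Delta^{em}$ is central, in $\X$ (where we have quotiented by $\gpD\supseteq\gpDe$) the element $\Delta^{em}x$ and $x$ represent the same translation, so $\mathrm{axis}(g^N)$ in~$\X$ is exactly the left-translate by~$w$ of $\mathrm{axis}(x)$. Because $G$ acts isometrically on~$\X$ (Notation~\ref{Not.}) and left-translation carries a strongly contracting map to a strongly contracting map, Proposition~\ref{P:StrongContr} gives that $\mathrm{axis}(g^N)=w\cdot\mathrm{axis}(x)$ is strongly contracting. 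It remains to pass from $g^N$ to~$g$. The two axes $\{g^k\gpD\}$ and $\{g^{Nk}\gpD\}$ are at finite Hausdorff distance: every $g^k$ is within $\lfloor N/2\rfloor$ times the (finite) quantity $\max_{0\le r<N}d_\X(\ast,g^r)$ of the nearest multiple $g^{Nk'}$, and conversely $\mathrm{axis}(g^N)\subseteq\mathrm{axis}(g)$. Hence $d_{\rm Hausdorff}(\mathrm{axis}(g),\mathrm{axis}(g^N))<\infty$, and Lemma~\ref{L:Hausdorff}(i) upgrades the strongly contracting map onto $\mathrm{axis}(g^N)$ to one onto $\mathrm{axis}(g)$. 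This proves~(i).

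For part~(ii) I would invoke Proposition~\ref{P:XandGammaBarAreQI}, which provides an isometric, $\lfloor e/2\rfloor$-dense (hence quasi-surjective) embedding $\iota\co\X\hookrightarrow\overline\Gamma$ sending $g\gpD\mapsto\underline g\,\gpDe$. Applying Lemma~\ref{L:Hausdorff}(ii) to the strongly contracting map onto $\mathrm{axis}(g)\subset\X$ from part~(i) yields a strongly contracting map $\overline\Gamma\to\iota(\mathrm{axis}(g))$. It then only remains to check that $\iota(\mathrm{axis}(g))$ is at finite Hausdorff distance from the axis $\{g^k\,Z(G)\}\subset\overline\Gamma$ named in the statement, and to apply Lemma~\ref{L:Hausdorff}(i) one final time. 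This last Hausdorff comparison is routine: $\iota$ sends $g^k\gpD$ to $\underline{g^k}\,\gpDe$, and $\underline{g^k}=g^k\Delta^{-\inf(g^k)}$ differs from~$g^k$ by a power of~$\Delta$, so $\iota(g^k\gpD)$ and $g^k\,Z(G)$ differ by the image in~$\overline\Gamma$ of a power of~$\Delta$ modulo~$\Delta^e$, which lies in a set of diameter at most~$\lfloor e/2\rfloor$.

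The step I expect to be most delicate is the reduction from~$g^N$ to~$g$ in part~(i)---specifically, verifying cleanly that $d_{\rm Hausdorff}(\mathrm{axis}(g),\mathrm{axis}(g^N))$ is finite and interacts correctly with the quotient by~$\gpD$. One must be careful that the inclusion $\mathrm{axis}(g^N)\subseteq\mathrm{axis}(g)$ holds at the level of \emph{cosets} in~$\X$ (it does, trivially, since $g^{Nk}\gpD$ is among the $g^k\gpD$), and that the ``filling in'' of the intermediate powers $g^{Nk'+r}$ stays within a bounded $\X$-distance of the sparser subaxis; both follow from the quasi-isometric embedding clause of the Morse definition, which guarantees $t\mapsto g^t\gpD$ is a quasi-isometric embedding of~$\Z$, so consecutive points of the full axis are uniformly spaced. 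Once this finiteness is in hand, the remaining invocations of Lemma~\ref{L:Hausdorff} and the isometric left-$G$-action are formal.
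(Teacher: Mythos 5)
Your proposal is correct and follows essentially the same route as the paper: reduce to the right-rigid conjugate power supplied by Proposition~\ref{P:PowerConjToRigid}, apply Proposition~\ref{P:StrongContr}, and transfer the strong contraction property back to $g$ and then to $\overline\Gamma$ via Lemma~\ref{L:Hausdorff} and Proposition~\ref{P:XandGammaBarAreQI}. The only slip is the claim that $\mathrm{axis}(g^N)$ is \emph{exactly} the left-translate $w\cdot\mathrm{axis}(x)$ --- since $g^{Nk}\gpD = w(\Delta^{em}x)^k w^{-1}\gpD$ carries a trailing $w^{-1}$, the two sets agree only up to finite Hausdorff distance --- but this is harmless, as one more application of Lemma~\ref{L:Hausdorff}(i) (which you invoke anyway in the very next step) absorbs it.
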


\begin{proof}[Proof of Theorem~\ref{T:StrongContr}]
First we recall that the axis of~$g$ being Morse in~$\X$ or in $\overline \Gamma$ are equivalent properties, because the property of being Morse is invariant under quasi-isometry.

Now, by Proposition~\ref{P:PowerConjToRigid}, there is an element $x\in G$ with $\inf(x)=0$ which is right-rigid, and which is obtained from~$g$ by taking a power, conjugating by some element $a\in G$, and multiplying by a central element. 
Thus in both spaces, $\X$ and~$\overline \Gamma$, taking the axis of~$x$ and translating it by the action of~$a$ yields a subset which is at finite Hausdorff-distance from the axis of~$g$. 
By Proposition~\ref{P:StrongContr}, the axis of $x$ is strongly contracting in $\X$,
and so is its image under the $a$-action; by Lemma~\ref{L:Hausdorff}(i), the axis of $g$ is strongly contracting in $\X$. 

For Theorem~\ref{T:StrongContr}(ii) we recall from Proposition~\ref{P:XandGammaBarAreQI} that there is an isometric embedding $\iota\co \X \hookrightarrow \overline \Gamma$ with $\lfloor\frac{e}{2}\rfloor$-dense image. 
The vertices of the image are those which are represented by elements $g$ with $\inf(g)\equiv 0 \mod e$. 
In particular, the axis of~$x$ in~$\overline \Gamma$ is
the image under $\iota$ of the axis of $x$ in $\mathcal X$.  
By Proposition~\ref{P:StrongContr} and Lemma~\ref{L:Hausdorff}(ii), the axis of $x$ in $\overline{\Gamma}$ is strongly contracting (and so is its $a$-translate). By Lemma~\ref{L:Hausdorff}(i), the axis of~$g$ in~$\overline \Gamma$ is also strongly contracting.
\end{proof}

\begin{remark}
The proof did not use the full strength of the Morse hypothesis. 
We only ever considered (1,0)-quasi-geodesics (i.e.\ actual geodesics) and $(2,0)$-quasi-geodesics consisting of two geodesic segments: we only needed these two types of paths to stay at bounded distance from the axis. We showed that for $\Delta$-pure Garside groups of finite type, this condition on axes is equivalent to both strong contractibility and to the full Morse property.
\end{remark}


\section{Consequences for the additional length graph}\label{S:LoxOnCAL}

In this section, we record a consequence of Theorem~\ref{T:StrongContr} for the study of the \emph{additional length graph} $\CAL(G)$ of a Garside group~$G$.
This graph was introduced in~\cite{CalvezWiestCurve}, and further studied in~\cite{CalvezWiestAH} and~\cite{CalvezWiestSurvey} (see also~\cite{CalvezEuclidean}). We briefly recall the definition and the main results from~\cite{CalvezWiestCurve}:

\begin{definition}\label{D:CAL} Let $G$ be a Garside group of finite type.

(a) An element $h\in G$ is \emph{absorbable} if it satisfies two conditions:
\begin{enumerate}
    \item[(i)] $\inf(h)=0$ \ or \ $\sup(h)=0$
    \item[(ii)] there exists an element $g\in G$ which ``absorbs'' $h$, meaning that $\inf(g)=\inf(gh)$ and $\sup(g)=\sup(gh)$. 
\end{enumerate}

(b) The \emph{additional length graph} $\CAL(G)$ of~$G$ is the (usually locally-infinite) graph with the same set of vertices and edges as~$\mathcal X$, but with, additionally, a new edge between vertices $g\gpD$ and~$h\gpD$ whenever there is an absorbable element $s\in G$ so that $\underline g s\in h\gpD$. 
The graph metric of $\CAL(G)$ is denoted by~$\dAL$: for vertices $g\gpD$, $h\gpD$ of $\CAL$, we sometimes denote $\dAL(g,h)=\dAL(g\gpD,h\gpD)$.
\end{definition}

\begin{proposition}\cite[Lemmas 1-3]{CalvezWiestCurve}\label{P:Absorbable}
\begin{itemize}
    \item[(i)] An element $h\in G$ is absorbable if and only if $h^{-1}$ is.
    \item[(ii)] If $h=h_1\cdot h_2\cdot h_3$, with $\inf(h)=$\ $\inf(h_1)=$ $\inf(h_2)=\inf(h_3)=0$ is absorbable, 
    then $h_1, h_2$ and $h_3$ are also absorbable.
     \item[(iii)] Suppose that $h\in G$ is absorbable; then there exists an absorbing element $g$ with $\inf(g)=0$ and $\sup(g)=\ell(h)$. 
\end{itemize}
\end{proposition}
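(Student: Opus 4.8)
The plan is to treat the three parts in order, the first two being short and the third carrying all the difficulty.

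For part~(i), recall $\inf(h^{-1})=-\sup(h)$ and $\sup(h^{-1})=-\inf(h)$, so condition~(i) of Definition~\ref{D:CAL}(a) holds for $h$ if and only if it holds for $h^{-1}$. For condition~(ii), I would observe that if $g$ absorbs $h$ then $g':=gh$ absorbs $h^{-1}$: since $g'h^{-1}=g$, the equalities $\inf(g')=\inf(gh)=\inf(g)=\inf(g'h^{-1})$ and $\sup(g')=\sup(gh)=\sup(g)=\sup(g'h^{-1})$ are exactly the absorbing relations for $g$. As $h=(h^{-1})^{-1}$, the converse is the same statement, proving~(i).

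For part~(ii), the key is monotonicity of $\inf$ and $\sup$ under right multiplication by a positive element: if $p\in G^{+}$ then $a\preccurlyeq ap$, and from $\Delta^{\inf(a)}\preccurlyeq a\preccurlyeq ap\preccurlyeq\Delta^{\sup(ap)}$ one reads off $\inf(a)\leqslant\inf(ap)$ and $\sup(a)\leqslant\sup(ap)$. Since $h_1,h_2,h_3$ are positive (each has $\inf=0$), both $\inf$ and $\sup$ are non-decreasing along the chain $g\preccurlyeq gh_1\preccurlyeq gh_1h_2\preccurlyeq gh_1h_2h_3=gh$. The absorbing hypothesis pins the two ends together, $\inf(g)=\inf(gh)$ and $\sup(g)=\sup(gh)$, so both quantities are in fact constant along the whole chain. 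Reading off consecutive pairs shows that $g$ absorbs $h_1$, that $gh_1$ absorbs $h_2$, and that $gh_1h_2$ absorbs $h_3$; together with $\inf(h_i)=0$ this yields absorbability of each $h_i$.

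Part~(iii) is where the real work lies. First I would reduce to the case $\inf(h)=0$: if instead $\sup(h)=0$, I apply the case $\inf=0$ to $h^{-1}$ (which has $\inf(h^{-1})=0$ and $\ell(h^{-1})=\ell(h)$) to get an absorber $g_1$ of $h^{-1}$ with $\inf(g_1)=0$ and $\sup(g_1)=\ell(h)$; then, by the mechanism of part~(i), $g:=g_1h^{-1}$ absorbs $h$, and because $g_1$ absorbs $h^{-1}$ we get $\inf(g)=\inf(g_1)=0$ and $\sup(g)=\sup(g_1)=\ell(h)$, as required. So assume $\inf(h)=0$, i.e.\ $h\in G^{+}$ and $\ell(h)=\sup(h)=:S_h$. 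Starting from an arbitrary absorber $g_0$ of $h$, left-multiplication by a suitable power of $\Delta$ (which shifts $\inf(g_0)$, $\inf(g_0h)$, $\sup(g_0)$ and $\sup(g_0h)$ all by the same amount, hence preserves absorption) produces an absorber with $\inf=0$; write its left normal form $s_1\cdots s_S$ and note $S=\sup(g_0)\geqslant S_h$. My candidate for the economical absorber is the tail $g:=s_{S-S_h+1}\cdots s_S$ of the last $S_h$ factors; being a suffix of a left normal form it is itself in left normal form, so $\inf(g)=0$ and $\sup(g)=S_h=\ell(h)$.

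To see that $g$ absorbs $h$ I need the following stability statement, which I expect to be the main obstacle: \emph{if $g_0$ and $g_0h$ are positive with $\inf=0$ and the same supremum $S$, and $\sup(h)=S_h$, then the first $S-S_h$ letters of the left normal form of $g_0h$ coincide with $s_1,\ldots,s_{S-S_h}$.} Granting this, the left normal form of $g_0h$ reads $s_1\cdots s_{S-S_h}\,t_{S-S_h+1}\cdots t_S$; cancelling the common prefix gives $gh=t_{S-S_h+1}\cdots t_S$, again a suffix of a left normal form and hence itself normal, so $\inf(gh)=0$ and $\sup(gh)=S_h=\sup(g)$ — exactly the absorbing relations. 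The stability statement is the expected difficulty: it asserts that right multiplication by a positive element shorter than $g_0$ cannot alter the leading part of the normal form once the infimum is unchanged, because no $\Delta$ is created to propagate to the front. Using the elementary fact that $u\wedge\Delta^{k}=s_1\cdots s_k$ for a positive $u$ with normal form $s_1\cdots s_m$ and $k\leqslant m$, the precise point to establish is $(g_0h)\wedge\Delta^{S-S_h}\preccurlyeq g_0$, which combined with $g_0\preccurlyeq g_0h$ forces equality of the two truncations. I would prove this in the spirit of the fellow-traveller Lemma~\ref{L:AFellowTraveller} and the normal-form recomputation of~\cite{GebhardtGM}, by tracking that the local left-weightedness of the pairs $(s_i,s_{i+1})$ with $i<S-S_h$ survives the insertion of $h$ at the right end.
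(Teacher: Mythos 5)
Parts (i) and (ii) of your argument are correct, and they coincide with the standard proofs (note that the paper does not reprove this proposition; it only cites \cite{CalvezWiestCurve}). In part (iii) the reduction to $\inf(h)=0$, the normalisation $\inf(g_0)=0$, and the inequality $S\geqslant S_h$ are all fine, but the ``stability statement'' on which your proof of $\sup(gh)\leqslant S_h$ rests is \emph{false}. Counterexample in $B_4$ with the classical Garside structure: take $g_0=\sigma_2\cdot\sigma_2$ (a left normal form, since $\sigma_2^2$ is not simple), so $S=2$, and $h=\sigma_1\sigma_2$, which is simple, so $S_h=1$. Then $g_0h=\sigma_2\cdot(\sigma_2\sigma_1\sigma_2)$ is a product of two simple elements, so $\sup(g_0h)=2=\sup(g_0)$ and $\inf(g_0h)=0$; thus $g_0$ absorbs $h$. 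But $(\sigma_2\sigma_1)^{-1}g_0h=\sigma_1^{-1}(\sigma_2\sigma_1\sigma_2)=\sigma_1^{-1}(\sigma_1\sigma_2\sigma_1)=\sigma_2\sigma_1$ is positive, so $\sigma_2\sigma_1\preccurlyeq(g_0h)\wedge\Delta^{S-S_h}=(g_0h)\wedge\Delta$, whereas $\sigma_2\sigma_1\not\preccurlyeq\sigma_2^2=g_0$; in fact the left normal form of $g_0h$ is $(\sigma_2\sigma_1\sigma_2)\cdot\sigma_1$, whose first factor differs from $s_1=\sigma_2$. The conceptual reason is that preserving the supremum only prevents a new factor from appearing at the right-hand end; it does not stop the normal-form recomputation from pushing letters arbitrarily far to the left, so the left-weightedness of the pairs $(s_i,s_{i+1})$ with $i<S-S_h$ genuinely can be destroyed.

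Interestingly, your candidate $g=s_{S-S_h+1}\cdots s_S$ \emph{does} absorb $h$ (in the example $\sigma_2\cdot\sigma_1\sigma_2$ is simple); only your route to proving it fails, and the repair forces suffixes and right normal forms into the picture, consistently with Remark~\ref{R:RightRigidity}. Concretely: $\sup(g_0h)\leqslant S$ is equivalent to $h\preccurlyeq g_0^{-1}\Delta^{S}$, and since $\sup(h)=S_h$ one gets $h\preccurlyeq(g_0^{-1}\Delta^{S})\wedge\Delta^{S_h}=(g_0\wedge^{\!\Lsh}\Delta^{S_h})^{-1}\Delta^{S_h}$, where $g'':=g_0\wedge^{\!\Lsh}\Delta^{S_h}$ is the product of the last $S_h$ factors of the \emph{right} normal form of $g_0$. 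Hence $g''h\preccurlyeq\Delta^{S_h}$, while $\Delta\preccurlyeq g''h$ would force $\inf(g_0h)\geqslant 1$; so $g''$ is the desired absorber. If you insist on the left-normal-form tail, observe that it is a suffix of $g''$: writing $g''=c\cdot(s_{S-S_h+1}\cdots s_S)$ with $c$ positive yields $\sup\bigl((s_{S-S_h+1}\cdots s_S)h\bigr)\leqslant\sup(c^{-1})+\sup(g''h)\leqslant S_h$, with no need for any stability of the normal form.
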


Since there is a natural inclusion $\X\hookrightarrow \CAL(G)$, we can interpret the family of paths $A(g,h)$ from Definition~\ref{D:PreferredPaths} as a family of paths in~$\CAL(G)$.

\begin{proposition}[Properties of $\CAL$] \cite[Theorem 1]{CalvezWiestCurve}\label{P:CALproperties}
Let $G$ be a Garside group of finite type. 
\begin{enumerate}
    \item[(i)] The additional length graph $\CAL(G)$ is $60$-hyperbolic.
    \item[(ii)] The paths $A(g,h)$ form a uniform family of unparameterized quasi-geodesics in $\CAL(G)$.
\end{enumerate}
\end{proposition}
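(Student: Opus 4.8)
The plan is to deduce both statements simultaneously from a single ``guessing geodesics'' criterion for hyperbolicity, of the kind due to Masur--Minsky and Bowditch: if one is given, for every pair of vertices $a,b$ of a connected graph, a connected subgraph $\eta(a,b)$ containing $a$ and $b$, and if there is a constant $D_0$ such that (1) whenever $\dAL(a,b)\leqslant 1$ the set $\eta(a,b)$ has $\dAL$-diameter at most $D_0$, and (2) for every triple $a,b,c$ one has $\eta(a,b)\subset N_{D_0}\big(\eta(a,c)\cup\eta(c,b)\big)$ (where $N_{D_0}$ denotes the $D_0$-neighbourhood in $\CAL(G)$), then $\CAL(G)$ is $\delta$-hyperbolic for a $\delta$ depending only on $D_0$, and the sets $\eta(a,b)$ are automatically uniform unparameterized quasi-geodesics. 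I would take $\eta(g,h)=A(g,h)$, the preferred paths of Definition~\ref{D:PreferredPaths}, now viewed inside $\CAL(G)$ via the inclusion $\X\hookrightarrow\CAL(G)$. Since this criterion outputs exactly statements (i) and (ii) at once, the whole task reduces to verifying the two hypotheses.

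The verification of (1) is where absorbability does the work. Two vertices $g\gpD,h\gpD$ with $\dAL(g\gpD,h\gpD)\leqslant 1$ are joined either by an ordinary edge of $\X$ --- in which case $A(g,h)$ has length at most two and there is nothing to prove --- or by an absorbable element $s$ with $\underline g s\in h\gpD$. In the latter case, using Proposition~\ref{P:Absorbable}(i) together with the symmetry of preferred paths (Proposition~\ref{P:PreferredPath}(ii)) we may arrange $\inf(s)=0$, so that $A(g,h)$ is, up to left translation, the normal-form path of $s$ and its intermediate vertices are the partial products of the left normal form of $s$. By Proposition~\ref{P:Absorbable}(ii) each such partial product, and each complementary factor, is again absorbable; hence every intermediate vertex is joined to $g\gpD$ (and to $h\gpD$) by a single additional edge of $\CAL(G)$, so $A(g,h)$ has $\dAL$-diameter bounded by a universal constant. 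This gives (1).

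The main obstacle is the thin-triangle condition (2), and here I would argue Garside-theoretically through greatest common prefixes. Given $g,h,c$, Proposition~\ref{P:PreferredPath}(i) lets me split each preferred path at the relevant common prefixes (such as $\underline g\wedge\underline h$), reducing the problem to the ``tripod'' configuration in which the three distinguished representatives share a common initial segment and then branch. Past the branch point I would combine the fellow-traveller property (Lemma~\ref{L:AFellowTraveller}) with the concatenation estimate (Lemma~\ref{L:ConcatQuasiGeod}) to show that a vertex of $A(g,h)$ is $\X$-close --- hence $\CAL$-close --- to the corresponding vertex of $A(g,c)$ or of $A(c,h)$, while the branch point itself is absorbably close to $c\gpD$. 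The delicate point, and the reason one must control the $\CAL$-metric rather than the $\X$-metric, is that the extra edges can create genuine shortcuts: one cannot simply transport $\X$-estimates, but must check that every shortcut available to $A(g,h)$ is matched, up to bounded error, by a shortcut available to $A(g,c)\cup A(c,h)$. Quantifying this matching with fully explicit constants is exactly what produces the stated value $60$; I would therefore first establish hyperbolicity with \emph{some} constant from the criterion, and only afterwards track the numbers through steps (1) and (2), treating the optimization as bookkeeping rather than as a conceptual difficulty.
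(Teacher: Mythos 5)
First, a point of comparison: the paper does not prove this proposition at all --- it is imported verbatim from \cite{CalvezWiestCurve} (Theorem~1 there), so your attempt has to be measured against the proof in that earlier paper. Your overall strategy is indeed the one used there: apply a Masur--Minsky/Bowditch ``guessing geodesics'' criterion to the preferred paths $A(g,h)$ viewed inside $\CAL(G)$, obtaining the explicit hyperbolicity constant and the unparametrized quasi-geodesic property in one stroke. Your verification of condition (1) is correct and essentially identical to the original: after normalizing $\inf(s)=0$ via Proposition~\ref{P:Absorbable}(i), every intermediate vertex of $A(g,h)$ corresponds to a normal-form prefix of the absorbable element $s$, which is again absorbable by Proposition~\ref{P:Absorbable}(ii), so $A(g,h)$ has $\dAL$-diameter at most $2$.

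The gap is in condition (2), which is the technical heart of the theorem, and the one concrete mechanism you propose for it cannot work. You claim that, after splitting at greatest common prefixes, a vertex of $A(g,h)$ past the branch point is \emph{$\X$-close} to a corresponding vertex of $A(g,c)$ or $A(c,h)$, with absorbable jumps needed only near the branch point. If the thin-triangle condition could be verified with $\X$-distances in this way, the same criterion (whose condition (1) holds trivially in $\X$, preferred paths being geodesics there by Proposition~\ref{P:PreferredPath}(iv)) would prove that $\X$ itself is hyperbolic; but $\X$ is quasi-isometric to $G/Z(G)$, which for instance for $G=B_n$, $n\geqslant 4$, contains an undistorted $\Z^2$ and is not hyperbolic. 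Moreover the ``tripod'' reduction is not available in a lattice: the three pairwise meets $\underline g\wedge\underline h$, $\underline g\wedge\underline c$, $\underline h\wedge\underline c$ need not be $\preccurlyeq$-comparable, the initial segments $A(g,\underline g\wedge\underline h)$ and $A(g,\underline g\wedge\underline c)$ are normal-form paths of different elements that need not fellow-travel in $\X$, and there is no reason for $(\underline g\wedge\underline h)^{-1}\underline c$ to be absorbable. What the proof in \cite{CalvezWiestCurve} actually does at this point is to exhibit, for each vertex $v$ of $A(g,h)$, an explicit Garside-theoretic element carrying $v$ into a bounded $\dAL$-neighbourhood of $A(g,c)\cup A(c,h)$ and to \emph{prove that this element is absorbable}; identifying these absorbable jumps is precisely the content that your final paragraph names as ``the delicate point'' but does not supply. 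As it stands, the proposal establishes condition (1) but not condition (2), so neither the hyperbolicity nor the quasi-geodesity follows.
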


\begin{remark}\label{R:AbsorbableComments}
\begin{enumerate}
    \item[(a)] If $G$ is the braid group~$B_n$, equipped with the classical or dual Garside structure, then $\CAL(G)$ is conjectured to be quasi-isometric to the curve graph of the $(n+1)$-times punctured sphere.
    \item[(b)] Note that Proposition~\ref{P:CALproperties} is \emph{not} claiming that ${\rm diam}(\CAL(G))=\infty$. 
    For instance, the group $G=\mathbb Z^3$ carries a Garside structure with~$\Delta=(1,1,1)$ (see~\cite[Chapter~1,1.1]{DDGKM}), for which all elements $(k,0,0), (0,k,0)$ and $(0,0,k)$ (with $k\in\mathbb Z$) are absorbable, so that ${\rm diam}(\CAL(\Z^3))=3$. 
    By contrast, for any Artin group of spherical type~$A$ we do have ${\rm diam}(\CAL(A))=\infty$; 
    the proof of this fact in~\cite{CalvezWiestAH} involved an explicit Garside-theoretical construction of elements with (very) strongly constricting axes.
\end{enumerate}
\end{remark}

\begin{theorem}\label{T:MorseLoxOnCAL}
For a $\Delta$-pure Garside group of finite type~$G$ we consider the additional length graph~$\CAL(G)$, equipped with the $G/Z(G)$-action.
For any Morse element~$g$, the action of~$g$ on $\mathcal{C}_{AL}(G)$ is loxodromic and WPD.
\end{theorem}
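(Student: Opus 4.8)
The plan is to reduce to the right-rigid Morse element supplied by Proposition~\ref{P:PowerConjToRigid} and then to exploit the Garside-theoretical projection $\pi$, whose strong contraction property in $\X$ was established in Proposition~\ref{P:StrongContr}. Since being loxodromic and WPD are invariant under passing to powers and conjugates, and since the central subgroup $\gpDe$ acts trivially on $\CAL(G)$, it suffices to treat an element $x$ with $\inf(x)=0$ which is right-rigid and Morse; conjugation by the auxiliary element~$a$ of Proposition~\ref{P:PowerConjToRigid} is an isometry of $\CAL(G)$, so it does not affect the conclusion. Throughout I use that the identity-on-vertices inclusion $\X\hookrightarrow\CAL(G)$ is $1$-Lipschitz, so that $\dAL\leqslant d_\X$.

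The heart of the argument is the following \textbf{Key Lemma}: there is a constant~$B$ such that whenever $u\gpD$ and $v\gpD$ are joined by an edge of $\CAL(G)$, one has $d_\X(\pi(u),\pi(v))\leqslant B$. For ordinary edges of $\X$ this is exactly Proposition~\ref{P:ProjectionLipschitz} (with $B\geqslant\ell$). The new case is an \emph{absorbable} edge, where $\underline u\,s\in v\gpD$ for some absorbable~$s$; using Proposition~\ref{P:Absorbable}(i) we may assume $\inf(s)=0$. Here I would argue by contradiction: if $d_\X(\pi(u),\pi(v))$ were very large, then, because $\pi$ is strongly contracting (equivalently strongly constricting) and the preferred path $A(u,v)$ is a geodesic of $\X$ (Proposition~\ref{P:PreferredPath}(iv)), this geodesic would be forced to pass close to both $\pi(u)$ and~$\pi(v)$ and hence to fellow-travel a long power of~$x$ along $\mathrm{axis}(x)$. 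Translating back, $s$ would then contain a subsegment coarsely agreeing with a long power of the rigid Morse element~$x$. Since subsegments of absorbable elements are again absorbable (Proposition~\ref{P:Absorbable}(ii)), this long near-rigid piece would itself be absorbable, contradicting the fact that positive powers of a right-rigid element of positive canonical length are never absorbable. This incompatibility between rigidity and absorbability is the main obstacle, and is where the specific geometry of $\CAL(G)$ enters.

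Granting the Key Lemma, the map $\pi\co\CAL(G)\to\mathrm{axis}(x)$ becomes $B$-Lipschitz for the $\X$-metric on the target: along any $\dAL$-geodesic of~$n$ edges the value of~$\pi$ moves by at most $Bn$. Applying this to the axis points and using $\pi(x^i)=x^i$ (Lemma~\ref{L:ProjectionAndPrefixes2}) gives
$$|i-j|\cdot\ell=d_\X(\pi(x^i),\pi(x^j))\leqslant B\cdot\dAL(x^i,x^j),$$
so that $\dAL(x^i,x^j)\geqslant\frac{\ell}{B}|i-j|$. Combined with $\dAL\leqslant d_\X$, this shows the axis is a quasigeodesic in $\CAL(G)$ with positive stable translation length; hence $x$, and therefore~$g$, acts \textbf{loxodromically}.

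For the \textbf{WPD} property I would again use the coarse-Lipschitz projection together with properness of the action on the \emph{locally finite} graph~$\X$ (recall that $G/Z(G)$ acts on $\X$ with finite vertex-stabilizers $\gpD/\gpDe$). Fix $\varepsilon>0$ and the basepoint $\ast=x^0\gpD$, and suppose $h\in G/Z(G)$ satisfies $\dAL(\ast,h\ast)\leqslant\varepsilon$ and $\dAL(x^N,hx^N)\leqslant\varepsilon$. By the Key Lemma the projections satisfy $d_\X(\ast,\pi(h\ast))\leqslant B\varepsilon$ and $d_\X(x^N,\pi(hx^N))\leqslant B\varepsilon$; that is, the strongly contracting set $h\cdot\mathrm{axis}(x)$ comes uniformly close to $\mathrm{axis}(x)$ near the two far-apart points $\ast$ and $x^N$. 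Strong contraction of $\mathrm{axis}(x)$ in $\X$ then forces $h\cdot\mathrm{axis}(x)$ to fellow-travel $\mathrm{axis}(x)$ along the whole segment between them, so that $h$ coarsely preserves a long geodesic segment of the axis in the locally finite graph~$\X$. For $N$ large this, together with Lemma~\ref{L:Centralizer} (which gives $\langle x\rangle$ finite index in its centralizer), confines~$h$ to a finite set. Hence the relevant set of~$h$ is finite and the action is WPD, completing the proof.
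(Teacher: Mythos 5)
Your overall architecture coincides with the paper's: reduce to a right-rigid Morse element $x$ with $\inf(x)=0$, prove a ``Key Lemma'' bounding how far a single edge of $\CAL(G)$ can move the projection $\pi$ (this is the paper's Lemma~\ref{L:AbsorbableShortProj} combined with Proposition~\ref{P:ProjectionLipschitz}), deduce loxodromicity from the resulting Lipschitz bound, and get WPD from the constriction property. However, your proof of the Key Lemma in the absorbable case has a genuine gap precisely at the step you yourself identify as ``the main obstacle''. You derive the contradiction from the assertion that \emph{positive powers of a right-rigid element of positive canonical length are never absorbable}. This is not a consequence of rigidity: in $\Z^3$ with $\Delta=(1,1,1)$ the element $(1,0,0)$ is right-rigid with $\inf=0$ and $\ell=1$, yet all of its powers are absorbable (this is exactly Remark~\ref{R:AbsorbableComments}(b); the group is not $\Delta$-pure, but it shows rigidity alone cannot do the job). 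For a \emph{Morse} rigid element the statement that \emph{sufficiently large} powers are not absorbable is true, but proving it is itself the crux: the paper does this by unpacking what absorbability of the middle factor $s_2$ means geometrically --- an equilateral geodesic triangle of side $\ell(s_2)$ whose far corner lies at distance at least $\tfrac{\ell(s_2)}{2}-2C$ from the axis while sitting on a $(2,C)$-quasi-geodesic between two axis points --- and then invoking the Morse property to bound $\ell(s_2)$ by $2M_x^{(2,C)}+4C$. Your sketch skips this entirely, so the contradiction is never actually reached. A secondary problem in the same step: the middle piece supplied by the constriction property is only \emph{coarsely} close to a power of $x$, whereas Proposition~\ref{P:Absorbable}(ii) gives absorbability only of exact subwords of the normal form of $s$; so even granting non-absorbability of powers of $x$, you would still have to bridge the coarse discrepancy. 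The paper's triangle argument avoids this by never comparing $s_2$ to a power of $x$ at all.

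The remainder of your proposal is essentially the paper's proof and is fine modulo the Key Lemma. One small remark on the WPD part: the appeal to Lemma~\ref{L:Centralizer} is a red herring ($h$ need not commute with anything); what actually finishes the argument is a length count along the translated geodesic $A(h,hx^n)$, which has the same length as $A(1,x^n)$ and passes within $C+E\kappa$ of both $\ast$ and $x^n\gpD$, forcing $d_\X(1,h)\leqslant 3(C+E\kappa)$ and hence finiteness by local finiteness of $\X$.
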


Here WPD is the weak proper discontinuity condition of~\cite{BestvinaFujiwara}: 

\begin{definition}
The action of $g\in G/Z(G)$ on $\CAL(G)$ is \emph{weakly properly discontinuous} (WPD) if 
for every (equivalently, for any) $k\in G$, and for every $\kappa>0$, there exists $N>0$ such that for all $n\geqslant N$, the set
$$\{ h\in G/Z(G),\ \dAL(k,hk)\leqslant \kappa,\ \dAL(g^Nk, hg^nk)\leqslant \kappa\}$$ is finite. 
\end{definition}

\begin{corollary}\label{C:MorseDiamInf}
If $G$ contains a Morse element then ${\rm diam}(\CAL(G))=\infty$.
\end{corollary}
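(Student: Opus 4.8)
The plan is to deduce the corollary immediately from Theorem~\ref{T:MorseLoxOnCAL}, since the entire analytic content has already been packaged into that statement. The governing principle is elementary: a loxodromic isometry of a metric space has unbounded orbits, and an unbounded orbit sitting inside $\CAL(G)$ forces $\CAL(G)$ to have infinite diameter.

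First I would fix a Morse element $g\in G$, which exists by the hypothesis of the corollary. By Theorem~\ref{T:MorseLoxOnCAL}, the action of the image of~$g$ in $G/Z(G)$ on $\CAL(G)$ is loxodromic. I would then unpack the definition of loxodromic: for some (equivalently, for every) basepoint vertex $v=k\gpD$ of $\CAL(G)$, the orbit map $n\mapsto g^n\cdot v$ is a quasi-isometric embedding of $\mathbb Z$ into $\CAL(G)$. Equivalently, the stable translation length $\lim_{n\to\infty}\frac1n\,\dAL(v,g^n v)$ is strictly positive.

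From this, the conclusion is immediate: $\dAL(v,g^n v)\to\infty$ as $n\to\infty$, so the vertices $\{g^n\cdot v : n\in\mathbb Z\}$ of $\CAL(G)$ realize arbitrarily large pairwise distances, and therefore ${\rm diam}(\CAL(G))=\infty$, as claimed.

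I do not expect any real obstacle here; the only point requiring minor care is to invoke the correct formulation of ``loxodromic'' (positive stable translation length, or equivalently a quasi-isometrically embedded orbit) so that unboundedness of the orbit follows directly rather than requiring an extra argument. Everything genuinely difficult has already been absorbed into the loxodromicity half of Theorem~\ref{T:MorseLoxOnCAL}, which in turn rests on the strong contraction property established in Theorem~\ref{T:StrongContr}.
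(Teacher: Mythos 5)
Your proposal is correct and matches the paper's (implicit) argument: the corollary is stated as an immediate consequence of Theorem~\ref{T:MorseLoxOnCAL}, whose proof shows that $\dAL(1,x^n)$ grows linearly in $n$, so the orbit of a Morse element is unbounded in $\CAL(G)$ and the diameter is infinite. No gap.
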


\begin{corollary}
Pseudo-Anosov braids act loxodromically and WPD on $\CAL(B_n)$.
\end{corollary}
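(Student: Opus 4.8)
The plan is to deduce this corollary directly from Theorem~\ref{T:MorseLoxOnCAL} by checking that pseudo-Anosov braids satisfy its hypotheses. Theorem~\ref{T:MorseLoxOnCAL} asserts that \emph{any Morse element} of a $\Delta$-pure Garside group of finite type acts loxodromically and WPD on the additional length graph. So the entire task reduces to two verifications: first, that $B_n$ (equipped with either the classical or the dual Garside structure) is a $\Delta$-pure Garside group of finite type; second, that every pseudo-Anosov braid is a Morse element in the sense of Definition~\ref{D:Morse2}.

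First I would recall that $B_n$ is the prototypical Garside group of finite type, with finite simple set~$\mathcal D$ coming from either Garside structure, and that it is $\Delta$-pure: as noted in the introduction and in~\cite[Proposition 4.7]{Picantin}, an Artin-Tits group of spherical type is $\Delta$-pure precisely when its defining Coxeter graph is connected, and the Coxeter graph of type~$A_{n-1}$ (a path) is connected. Hence $Z(B_n)=\gpDe$ is infinite cyclic, and Theorem~\ref{T:MorseLoxOnCAL} applies to~$B_n$.

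Next I would invoke the example already recorded in Section~\ref{S:Morse}: a pseudo-Anosov braid~$g$ is Morse. The argument there is that the image of~$g$ in $B_n/Z(B_n)$ — which sits as a finite-index subgroup of the mapping class group of the $(n+1)$-punctured sphere — is a pseudo-Anosov mapping class, and by Behrstock~\cite{Behrstock} pseudo-Anosov mapping classes are Morse; since the Morse property is invariant under quasi-isometry and passes to finite-index subgroups, $g$ is Morse in the sense of Definition~\ref{D:Morse2}. With both hypotheses of Theorem~\ref{T:MorseLoxOnCAL} verified, the conclusion that $g$ acts loxodromically and WPD on $\CAL(B_n)$ is immediate.

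There is no real obstacle here: the corollary is a specialization of Theorem~\ref{T:MorseLoxOnCAL}, and the only content is matching the hypotheses to the pseudo-Anosov/braid-group setting. The single point requiring a citation rather than a one-line check is the assertion that pseudo-Anosov braids are Morse, which rests on Behrstock's theorem together with the observation that $B_n/Z(B_n)$ embeds with finite index in the relevant mapping class group; both facts are already established in the excerpt.
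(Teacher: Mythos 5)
Your proposal is correct and matches the paper's intended argument: the corollary is an immediate specialization of Theorem~\ref{T:MorseLoxOnCAL}, using that $B_n$ is a $\Delta$-pure Garside group of finite type and that pseudo-Anosov braids are Morse (the Example in Section~\ref{S:Morse}, via Behrstock). The paper leaves this proof implicit, and your two verifications are exactly the ones needed.
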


\begin{proof}[Proof of Theorem~\ref{T:MorseLoxOnCAL}]
Let $g$ a Morse element of $G$; by Proposition~\ref{P:PowerConjToRigid}, $g$ has a power which is conjugate to a right-rigid element of the form $\Delta^{em}x$ with $x$ right-rigid and $\inf(x)=0$. Thus it suffices to prove the theorem for a right-rigid Morse element~$x$ with $\inf(x)=0$. 

We know from Proposition~\ref{P:StrongContr} and from \cite[Proposition 2.9]{ArzhCashenTao} that there is a constant $C\in\mathbb N$ such that the Garside-theoretical projection $\pi\co \mathcal X \to {\rm axis}(x)$ is  $C$-strongly constricting.

\begin{lemma}\label{L:AbsorbableShortProj}
There is a constant $F\in\mathbb N$ with the following property:
suppose that we have $h_1,h_2\in G$ and an absorbable element $s\in G$ such that $\underline{h_1}s\in h_2\gpD$.
Then $$ d_\X(\pi(h_1),\pi(h_2)) < F.$$
\end{lemma}

\begin{proof}[Proof of Lemma~\ref{L:AbsorbableShortProj}]
After exchanging the roles of $h_1$ and $h_2$, if necessary, we can suppose that $\inf(s)=0$ (rather than $\sup(s)=0$). 
We are going to prove that the bound $F=2M_x^{(2,C)}+6 C$ works, where $M_x^{(2,C)}$ is the Morse constant for $(2,C)$-quasi-geodesics with endpoints on ${\rm axis}(x)$.

If $d_\X(\pi(h_1),\pi(h_2))\leqslant C$, then we are done.
If $d_\X(\pi(h_1),\pi(h_2))>C$, then the $C$-strong constriction property implies that the Garside normal form of~$s$ (as a word in the letters~$\mathcal D$) can be cut into three pieces, 
yielding a factorization $s=s_1\cdot s_2\cdot s_3$ with $\inf(s_1)=\inf(s_2)=\inf(s_3)=0$, and such that (see Figure~\ref{F:Absorbable})
\begin{figure}[htb]\label{F:Absorbable}
\begin{center}
\includegraphics{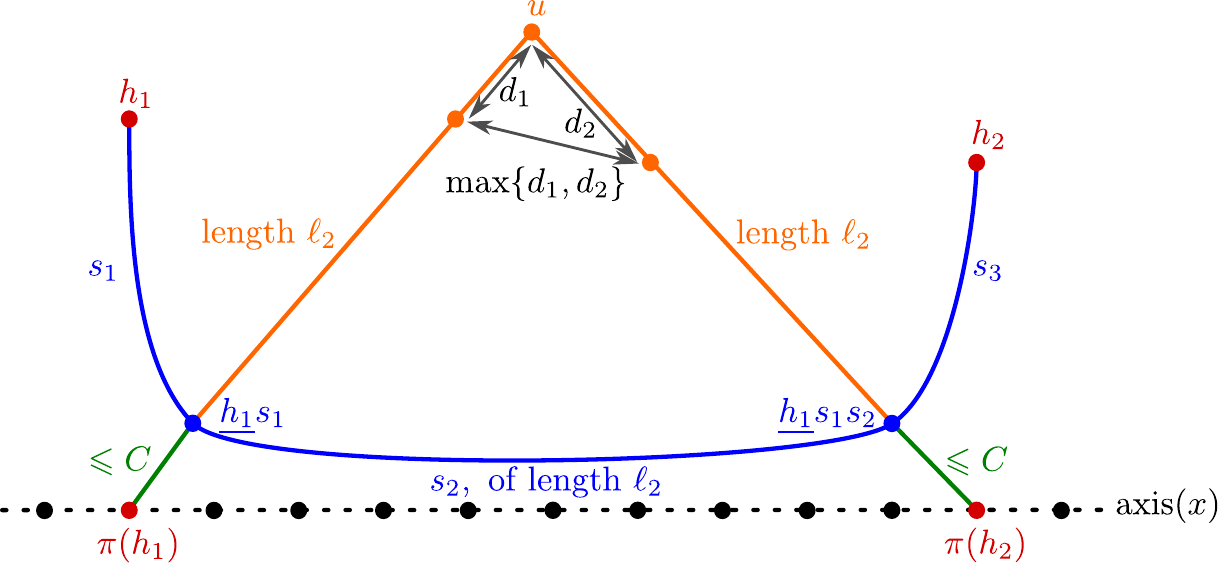}
\end{center}
\caption{The proof of Lemma~\ref{L:AbsorbableShortProj}}
\end{figure}
$$ d_\X( \underline{h_1}s_1, \pi(h_1) ) \leqslant C \text{ \ \ \ and \ \ \ } 
d_\X( \underline{h_1} s_1 s_2), \pi(h_2) ) \leqslant C.$$
By Proposition~\ref{P:Absorbable}(ii), all three factors $s_1$, $s_2$ and $s_3$ are absorbable. In particular,~$s_2$ is. Let us denote $\ell_2$ the Garside length of~$s_2$ -- thus $\inf(s_2)=0$, $\sup(s_2)=\ell_2$.

As seen in~\cite{CalvezWiestSurvey}, absorbability of~$s_2$ means that there is a geodesic triangle in~$\X$ which is equilateral of side length~$\ell_2$, and one of whose sides is the geodesic $A(\underline{h_1} s_1, \underline{h_1}s_1 s_2)$.
Moreover, for any two points in two different sides of this triangle, with distances $d_1$ and~$d_2$ from the shared corner of the triangle, the distance of the two points in~$\X$ is $\max(d_1,d_2)$. In particular, the triangle is $(2,0)$-quasi-isometrically embedded in~$\X$ (compare Lemma~\ref{L:ConcatQuasiGeod}).

Let $u\in G$ so that $u\gpD$ is the corner 
of the triangle furthest from the axis of~$x$.
We claim that the distance of $u\gpD$ from the axis is at least $\frac{l_2}{2}-2C$. 
Indeed, $$\ell_2-C\leqslant d_\X(u,\pi(h_i))\leqslant \ell_2+C$$ for $i=1,2$.
Moreover, for any $k\in\Z$ (not necessarily between $\lambda(h_1)$ and $\lambda(h_2)$) we have by the triangle inequality
\begin{multline*}
d_X(u,x^k)\geqslant \max\left( d_\X(x^k,\pi(h_2))-\ell_2-C, \ \ell_2-C-d_\X(x^k,\pi(h_1)),\right.\\
\left.\ell_2-C-d_\X(x^k,\pi(h_2)), \  d_\X(x^k,\pi(h_1))-\ell_2-C\right)
\end{multline*}

Using the fact that $\ell_2-2C\leqslant d_\X(\pi(h_1),\pi(h_2))\leqslant \ell_2+2C$, one can calculate that, depending on $k$, one of these four values is always at least $\frac{\ell_2}{2}-2C$. This completes the proof of the claim.

\begin{figure}[htb]
\begin{center}
\def\svgwidth{12cm}
\begingroup%
  \makeatletter%
  \providecommand\color[2][]{%
    \errmessage{(Inkscape) Color is used for the text in Inkscape, but the package 'color.sty' is not loaded}%
    \renewcommand\color[2][]{}%
  }%
  \providecommand\transparent[1]{%
    \errmessage{(Inkscape) Transparency is used (non-zero) for the text in Inkscape, but the package 'transparent.sty' is not loaded}%
    \renewcommand\transparent[1]{}%
  }%
  \providecommand\rotatebox[2]{#2}%
  \newcommand*\fsize{\dimexpr\f@size pt\relax}%
  \newcommand*\lineheight[1]{\fontsize{\fsize}{#1\fsize}\selectfont}%
  \ifx\svgwidth\undefined%
    \setlength{\unitlength}{215.01412525bp}%
    \ifx\svgscale\undefined%
      \relax%
    \else%
      \setlength{\unitlength}{\unitlength * \real{\svgscale}}%
    \fi%
  \else%
    \setlength{\unitlength}{\svgwidth}%
  \fi%
  \global\let\svgwidth\undefined%
  \global\let\svgscale\undefined%
  \makeatother%
  \begin{picture}(1,0.22311743)%
    \lineheight{1}%
    \setlength\tabcolsep{0pt}%
    \put(0,0){\includegraphics[width=\unitlength,page=1]{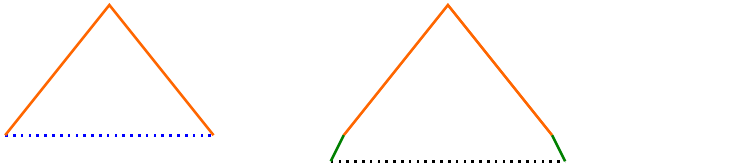}}%
    \put(0.38795791,0.01427323){\color[rgb]{0,0.50196078,0}\makebox(0,0)[lt]{\lineheight{1.25}\smash{\begin{tabular}[t]{l}$\leqslant C$\end{tabular}}}}%
    \put(0.75770142,0.01427323){\color[rgb]{0,0.50196078,0}\makebox(0,0)[lt]{\lineheight{1.25}\smash{\begin{tabular}[t]{l}$\leqslant C$\end{tabular}}}}%
    \put(0.09781051,0.11398708){\color[rgb]{0,0,0}\makebox(0,0)[lt]{\lineheight{1.25}\smash{\begin{tabular}[t]{l}$(2,0)$-\end{tabular}}}}%
    \put(0.05595276,0.08608191){\color[rgb]{0,0,0}\makebox(0,0)[lt]{\lineheight{1.25}\smash{\begin{tabular}[t]{l}quasi-geodesic\end{tabular}}}}%
    \put(0.55126948,0.0930582){\color[rgb]{0,0,0}\makebox(0,0)[lt]{\lineheight{1.25}\smash{\begin{tabular}[t]{l}$(2,C)$-\end{tabular}}}}%
    \put(0.50941172,0.06515323){\color[rgb]{0,0,0}\makebox(0,0)[lt]{\lineheight{1.25}\smash{\begin{tabular}[t]{l}quasi-geodesic\end{tabular}}}}%
    \put(0,0){\includegraphics[width=\unitlength,page=2]{TwoTriangles.pdf}}%
  \end{picture}%
\endgroup%

\end{center}
\caption{Left: the unit-speed parametrization of this path is a $(2,0)$-quasi-geodesic. \ Right: $\gamma$, which coincides with the previous path except for jumps of size at most $C$ at the starting and end point is a $(2,C)$-quasi-geodesic.}
\end{figure}

Now consider the path $\gamma\co [0,2\ell_2] \to \X$ 
\begin{itemize}
    \item with $\gamma(0)=\pi(h_1\gpD)$
    \item which for $t\in ]0,\ell_2]$ follows a unit speed parametrization of~$A( \underline{h_1}s_1, u)$ 
    \item which for $t\in [\ell_2,2\ell_2[$ follows a unit speed parametrization of~$A( u, \underline{h_1}s_1 s_2)$, and
    \item with $\gamma(2 \ell_2)=\pi(h_2\gpD)$
\end{itemize}

We see that $\gamma$ is a $(2,C)$-quasi-geodesic (because, apart from the jumps of size at most~$C$ at the starting and end points it is a $(2,0)$-quasi-geodesic).

Thus the Morse condition for the axis of~$x$ implies that $\frac{\ell_2}{2}-2C\leqslant M_x^{(2,C)}$, 
or equivalently, $\ell_2\leqslant 2M_x^{(2,C)}+4C$. Therefore
\[ d_\X( \pi(h_1) , \pi(h_2) ) \leqslant \ell_2+2C \leqslant 2M_x^{(2,C)}+6 C \qedhere \]
\end{proof} 

Coming back to the proof of Theorem~\ref{T:MorseLoxOnCAL}, suppose that the action of~$x$ is not loxodromic. This means that $t_n:=\dAL(1,x^n)$ grows sublinearly with $n$. 
Consider elements $1=h_0,h_1,\ldots, h_{t_n}=x^n$ of~$G$ and a geodesic in $\CAL(G)$ between $\ast$ and $x^n\gpD$ through the vertices $h_i\gpD$. 
The sublinear growth means that for sufficiently large values of~$n$, there must be an $i\in \{1,2,\ldots,t_n\}$ such that
$$d_{\mathcal X} (h_{i-1},h_i)\geqslant \max\{F,\ell(x)\}.$$ 
This contradicts either Lemma~\ref{L:AbsorbableShortProj} or
the Lipschitz property of~$\pi$ (Proposition~\ref{P:ProjectionLipschitz}). 
This completes the proof that the $x$-action on $\CAL(G)$ is loxodromic.

We now turn to the proof that the $x$-action on $\CAL(G)$ is WPD. Fix $\kappa>0$. 
Define $$ S_x^{(\kappa,n)} = \{ h\in G/Z(G) \ | \ \dAL(1,h)<\kappa \text{\ and\ } \dAL(x^n,h x^n)<\kappa \}$$

We look at the situation in~$\X$: denoting $E=\max\{F,\ell(x)\}$, 
Lemma~\ref{L:AbsorbableShortProj} and Proposition~\ref{P:ProjectionLipschitz} tell us that that for $h\in S_g^{(\kappa,n)}$,
$$
d_{\X}(1,\pi(h)) \leqslant E\cdot \kappa \text{ \ \ and \ \ } d_{\X}(x^n,\pi(hx^n)) \leqslant E\cdot \kappa
$$
We now choose $N$ sufficiently large so that $d_\X(1,x^N) > C + 2\cdot E\cdot \kappa$ -- then for all integers $n$ with $n\geqslant N$ we also have $d_\X(1,x^n) > C + 2\cdot E\cdot \kappa$, and by the triangle inequality
$$ d_\X(\pi(h),\pi(hx^n)) > C.$$

\begin{figure}[htb]
    \centering
    \includegraphics{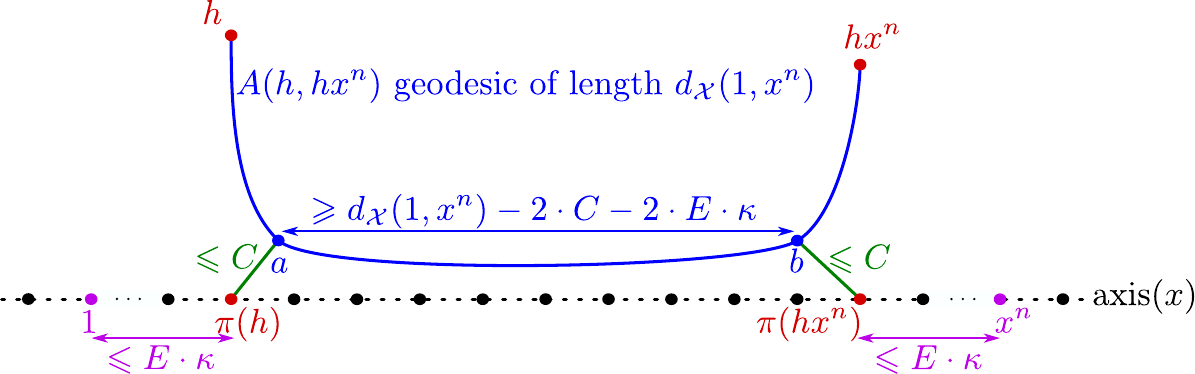}
    \caption{The proof that the action of $x$ is WPD.}
    \label{F:ProofWPD}
\end{figure}

The strong constriction property of~$\pi$ then guarantees that the geodesic $A(h,hx^n)$ passes through points $a\gpD$ and $b\gpD$ at distance at most~$C$ from $\pi(h\gpD)$ and $\pi(hx^n\gpD)$ respectively, and hence at distance at most $C+E\cdot \kappa$ from $\ast$ and $x^n\gpD$ respectively. 
Therefore we have $d_{\X}(a,b)\geqslant d_{\X}(1,x^n)-2\cdot C-2\cdot E\cdot\kappa$.

On the other hand, the geodesic $A(h,hx^n)$ has the same length as the segment of the axis $A(1,x^n)$ (as it is its image under left-translation by $h$).
Thus we have $$d_{\X}(h,a)=d_{\X}(1,x^n)-d_{\X}(a,b)-d_{\X}(b,hx^n)\leqslant d_{\X}(1,x^n)-d_{\X}(a,b) \leqslant 2\cdot C+2\cdot E\cdot \kappa.$$

We conclude that $d_\X(1,h) \leqslant 3\cdot (C+E\cdot \kappa)$. There are only finitely many elements $h\in G/Z(G)$ with this property. 
This completes the proof that the action of~$g$ is WPD.
In fact we have proven something slightly stronger than what was required: we found a bound on the size of the set $S_g^{(r,n)}$ which does not depend on~$n$, as long as $n\geqslant N$.
\end{proof} 




{\bf Acknowledgements.} The authors thank Yvon Verberne and Alessandro Sisto for useful discussions. 
The first author was partially supported by the EPSRC New Investigator Award EP/S010963/1. The first author acknowledges support by  FONDECYT 1180335.

\end{document}